%% file: likely_path.tex
\documentclass[11pt]{article}
\usepackage{xcolor}
\usepackage{amsmath,amssymb,amsthm,mathtools, hyperref}
\hypersetup{colorlinks=true}
\usepackage{enumitem}
\usepackage{tikz}
\usetikzlibrary{decorations.pathmorphing}
\usepackage{bbm}
\usepackage{stmaryrd}
\usepackage{caption}

\usepackage[style=alphabetic, maxbibnames=9]{biblatex}
\DeclareSourcemap{
  \maps[datatype=bibtex]{
    \map{
      \step[fieldsource=doi, final]
      \step[fieldset=url, null]
      \step[fieldset=urldate, null]
    }
  }
}

\newcommand{\disteq}{\ensuremath{\stackrel{d}{=}}}

\newcommand{\cornerpath}{\gamma^{\raisebox{-0.5ex}{$\scriptstyle\ulcorner$}}}
\newcommand{\rcornerpath}{\gamma^{\raisebox{-0.5ex}{$\scriptstyle\lrcorner$}}}

\newcommand{\bone}{\mathbbm{1}}

\newcommand{\bltriangle}{\mathord{\text{%
\tikz[baseline=0.1ex, line join=round] {%
\draw[line width=0.06em] (0,0) -- (1ex,0) -- (0,1ex) -- cycle;%
}%
}}}

\newcommand{\tltriangle}{\mathord{\text{%
\tikz[baseline=0.1ex, line join=round] {%
\draw[line width=0.06em] (0,0) -- (0,1ex) -- (1ex,1ex) -- cycle;%
}%
}}}

\newcommand{\trtriangle}{\mathord{\text{%
\tikz[baseline=0.1ex, line join=round] {%
\draw[line width=0.06em] (0,1ex) -- (1ex,1ex) -- (1ex,0) -- cycle;%
}%
}}}

\newcommand{\brtriangle}{\mathord{\text{%
\tikz[baseline=0.1ex, line join=round] {%
\draw[line width=0.06em] (0,0) -- (1ex,0) -- (1ex,1ex) -- cycle;%
}%
}}}

\newcommand{\PP}{\mathbb{P}}
\newcommand{\EE}{\mathbb{E}}
\newcommand{\ZZ}{\mathbb{Z}}
\newcommand{\RR}{\mathbb{R}}
\newcommand{\1}{\mathbf{1}}
\newcommand{\eps}{\epsilon}
\DeclareMathOperator*{\argmax}{arg\,max}
\DeclareMathOperator*{\esssup}{ess\,sup}
\DeclareMathOperator{\sgn}{sgn}

\DeclarePairedDelimiter\abs{\lvert}{\rvert}
\DeclarePairedDelimiter\norm{\lVert}{\rVert}
\DeclarePairedDelimiter\brac{\lparen}{\rparen}
\DeclarePairedDelimiter\sqrbrac{[}{]}

\DeclarePairedDelimiter\set{\{}{\}}
\DeclarePairedDelimiter\floor{\lfloor}{\rfloor}
\DeclarePairedDelimiter\ceil{\lceil}{\rceil}
\DeclarePairedDelimiter\bbrac{\llbracket}{\rrbracket}

\newtheorem{theorem}{Theorem}
\newtheorem{proposition}{Proposition}
\newtheorem{corollary}{Corollary}
\newtheorem{lemma}{Lemma}
\newtheorem{assumption}{Assumption}

\theoremstyle{remark}
\newtheorem{remark}{Remark}

\newcommand{\Exp}{\ensuremath{\mathrm{Exp}}}

\DeclarePairedDelimiterXPP\Ex[1]{\mathbb{E}}[]{}{

#1}
\DeclarePairedDelimiterXPP\Exabs[1]{\mathbb{E}}\lvert\rvert{}{

#1}
\DeclarePairedDelimiterXPP\Prob[1]{\mathbb{P}}(){}{

#1}
\DeclarePairedDelimiterXPP\Var[1]{\mathrm{Var}}\lparen\rparen{}{

#1}

\title{On the most likely geodesic in last passage percolation}
\author{Sam McKeown \and Shuta Nakajima}

\begin{document}
\maketitle
\begin{abstract}
    We consider the problem of identifying the paths most likely to occur as geodesics in last passage percolation. Heuristics suggest that these modal paths should be the extreme \emph{corner paths}, going straight between the corners of the cube of accessible vertices. We identify three mechanisms which favour such corner paths, and show that they are more likely to appear than all but a vanishingly small portion of paths. We make more specific comparisons in exponential last passage percolation, where moderate deviation estimates may be used to show that corner paths are nearly modal in a precise sense. Finally, we show a form of monotonicity of geodesic probabilities in a special case and conjecture that this holds in general.
\end{abstract}
\section{Introduction}
Consider last passage percolation (LPP) on $\ZZ^d$ with i.i.d.\ weights
$\omega=(\omega_x)_{x\in \ZZ^d}$.
A directed path on the lattice is a sequence $\gamma = (\gamma^1, \dots, \gamma^N)$ such that $\gamma^{i+1} - \gamma^i  \in \set{e_1,\dots,e_d}$. To such a path, associate a passage time
\begin{equation*}
    L(\gamma) = \sum_{x \in \gamma} \omega_x.
\end{equation*}
Let $\Gamma_n$ be the set of directed paths connecting $0$ to $(n, \dots, n) \in \ZZ^d$, and define the passage time to $(n, \dots, n)$ as
\begin{equation}
\label{eq:LPPDef}
    L_n = \max_{\gamma \in \Gamma_n}\sum_{x \in \gamma} \omega_x.
\end{equation}
Write $\Gamma_n^* = \{\gamma \in \Gamma_n : L(\gamma) = L_n\}$ for the (non-empty) set of paths at which the maximum in \eqref{eq:LPPDef} is attained. Call an element $\gamma_n^* \in \Gamma_n^*$ a \emph{geodesic}. Observe that geodesics will be unique almost surely when the distribution of $\omega_x$ is continuous.

It is quite unlikely for a given path to be a geodesic. Consider the time constant $\mu$, defined by the almost sure limit
\[
\mu = \lim_{n \to \infty} \frac{L_n}{d n}.
\]
This represents the average weight seen along a geodesic. Except in trivial cases, $\mu$ will be greater than the mean of the weights, and thus attaining this average along a particular path is an exponentially unlikely event. It would be of interest to understand better the probability of this exponentially unlikely event, and especially its dependence on the geometry of the path. 

As a first step in this direction, the authors of \cite{AC:lpp-geom} asked: \emph{what is the most likely shape of $\gamma^*_n$?} That is, they asked for the value of
\[
\argmax_{\gamma \in \Gamma_n} \PP(\gamma \in \Gamma^*_n).
\]
Their paper presents a heuristic argument which suggests that in $\ZZ^d$ the modal geodesics are those which connect the corners of the cube $[0, n]^d$. For $d = 2$, these are the paths $(0, 0) \to (n, 0) \to (n, n)$ and $(0, 0) \to (0, n) \to (n, n)$. Further, one expects that there is some form of monotonicity, roughly saying that paths closer to the corners are more likely to appear as geodesics. This conjecture is borne out by simulation; see Figure \ref{fig:empirical}. This may be compared to the probability of passing through a particular \emph{site}, shown in Figure \ref{fig:siteprobs}.

\begin{figure}
    \centering
    \includegraphics[width=0.7\linewidth]{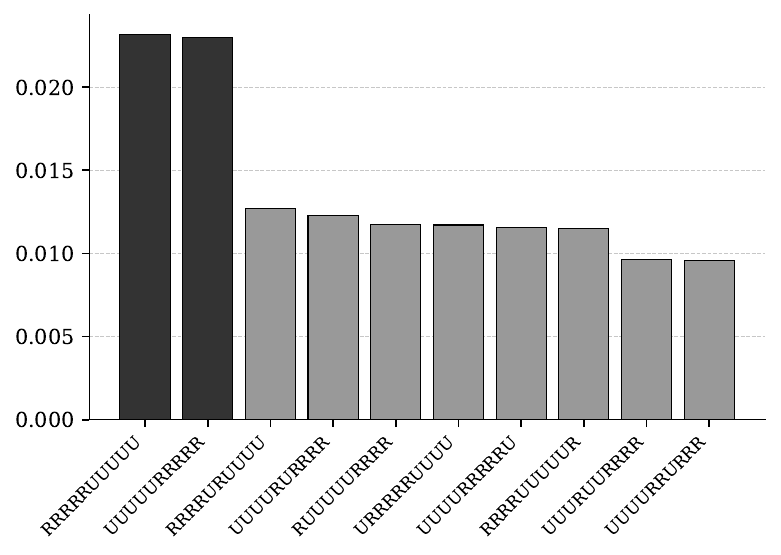}
    \caption{The observed frequency of individual geodesics when $n = 6$, run over $10^6$ simulations. The two corner paths are highlighted and are indeed more probable than any other path by a large margin.}
    \label{fig:empirical}
\end{figure}

\begin{figure}
    \centering
   \includegraphics[width=0.4\linewidth]{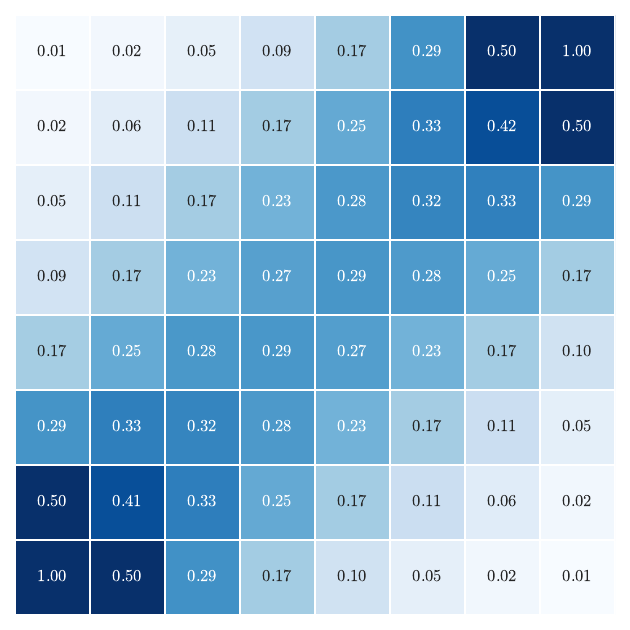}
    \hspace{2em}
   \includegraphics[width=0.4\linewidth]{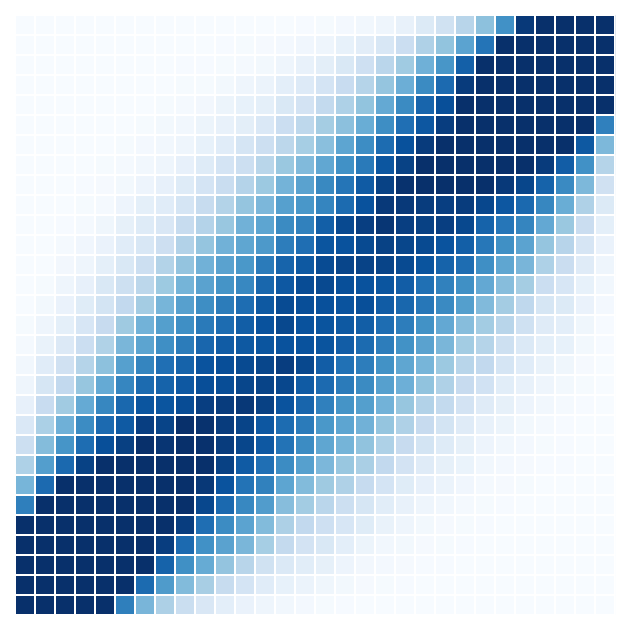}
    \caption{The observed probabilities of a site being visited by a geodesic, when $n = 8$ (left) or $n = 30$ (right). On the level of vertices, those outside of a window of width $O(n^{2/3})$ are exceedingly unlikely to be visited.}
    \label{fig:siteprobs}
\end{figure}

One difficulty is that the appearance of any particular path is an exceedingly rare event. To further hinder us, last passage percolation is thought to belong to the Kardar-Parisi-Zhang (KPZ) universality class, which predicts among other things that the geodesics $\gamma^*_n$ should typically follow the straight line with fluctuations of order $n^{\xi}$, $\xi < 1$. When $d=2$, a value of $\xi = \frac{2}{3}$ has been rigorously established in the exactly solvable case of $\omega_x \sim \Exp(1)$ \cite{BCS:flucts} (and for a related model earlier in \cite{J:flucts}).

Typically then, one will find $\gamma_n^*$ in the bulk of the cube. However, an overwhelming proportion of directed paths also live in the bulk of the cube. Any path going through the centre will have a very large number of paths which differ by $O(1)$ vertices. To be maximal, a path must in particular beat out these neighbours as it competes for weights. In contrast, a corner path has only $d - 1$ paths which differ by a single vertex, and is the only path which can collect the weights in the corners it visits. There is thus competition between the typical behaviour of wanting to go through the bulk, and increased entropy suppressing the likelihood of seeing a particular path in the bulk.

We prove some results which support the idea that entropy wins and that corner paths are the most likely. We identify three distinct but related mechanisms which advantage corner paths:
\begin{enumerate}[label=(\Alph*)]
    \item As alluded to above, corners in a path introduce vulnerabilities in the form of alternative paths obtained by ``flipping'' the corners. Paths with many corners have their probability of occurring suppressed by this competition.
    \item The initial and final line segments of a path ``feel'' less competition. Paths can afford to collect slightly less weight on these segments. The corner paths maximise the length of these segments and see the most benefit.
    \item Weights near the corners of the cube are accessible by fewer paths. In the planar case, weights \emph{in} the corners are accessible only by a single path each.
\end{enumerate}
We exploit (A) to prove that corner paths are more likely than paths with a number of corners growing linearly in $n$. While not especially intuitive, the effect in (B) falls out of an analysis in the integrable case of exponential LPP. Assuming a moderate deviation result which is not yet proved in the literature, we give two terms in an asymptotic expansion for the maximum probability and show that these are matched by the corner path probability. A less sharp result is given unconditionally. Finally, we observe (C) in the context of Bernoulli weights with small parameter (decaying with $n$). Here we can prove a rule for comparing geodesic probabilities and conjecture that it holds in general.

\section{Results}
\subsection*{Notation and preliminaries}
Before stating the results, let us put our notation in order. For vectors $u, v \in \ZZ^d$ with $u \le v$ (coordinate-wise), let $\Gamma(u; v)$ be the set of up-right directed paths connecting $u$ to $v$. To such a path $\gamma \in \Gamma(u; v)$, assign a passage time 
\[
L(\gamma) = \sum_{x \in \gamma} \omega_x.
\]
The last passage time from $u$ to $v$ is
\[
L(u; v) = \max_{\gamma \in \Gamma(u; v)}L(\gamma).
\]
Let $\1 = (1, \dots, 1) \in \ZZ^d$ be the diagonal vector. The $L_n$ we used above is simply $L(0; n\1)$ in this notation. 

When we restrict our attention to $d=2$, we often write coordinates explicitly and use $L(a, b; n, m)$ to denote the passage time from $(a, b)$ to $(n, m)$. In this planar setting, we also make use of point-to-line passage times. If $n \ge a + b$, then define
\[
L^{\bltriangle}(a, b; n) = \max_{0 \le k \le n - (a + b)}L(a, b; a + k, n - a - k),
\]
which will be the greatest passage time taking $(a, b)$ to the antidiagonal at level $n$. The line-to-point passage times $L^{\trtriangle}(n; a, b)$, now with $a + b \ge n$, are defined in the obvious way. Another variety of passage times we consider are half-space passage times $L^{\tltriangle}$, which are point-to-point last passage times except that the admissible paths are those that stay on or above the diagonal (that is, where $x_2 \ge x_1$). Times $L^{\brtriangle}$ are defined similarly, but restricting to paths on or below the diagonal ($x_1 \ge x_2$). It is clear that one has
\[
    L^{\tltriangle}(0, 0; n, n) \vee L^{\brtriangle}(0, 0; n, n) \le L(0, 0; n, n) \le L^{\bltriangle}(0, 0; n) + L^{\trtriangle}(n + 1; n, n).
\]

The time constant $\mu$ defined earlier can be found for arbitrary directions. Let
\[
\mu(x_1, \dots, x_d) = \lim_{n \to \infty}\frac{L(0, \dots, 0; \floor{nx_1}, \dots, \floor{n x_d})}{n}.
\]
That such a limit exists is an application of Kingman's subadditive ergodic theorem. We have abbreviated $\mu = \mu(\frac{1}{d},\dots, \frac{1}{d})$. In $d = 2$ and $\omega_x \sim \Exp(1)$, it is known that $\mu(x, y) = (\sqrt{x} + \sqrt{y})^2$.

Let $\Lambda(t) = \log \EE\sqrbrac{e^{t \omega_x}}$ be the cumulant generating function of the weight distribution. Let $I$ be the large deviation rate function for i.i.d.\ sums of our weights, given by
\[
I(x) = \sup_{t \in \RR} \sqrbrac{tx - \Lambda(t)}. 
\]
If $\gamma \in \Gamma_n$, then Cramér's theorem tells us that for all $\lambda > \EE \omega_x$,
\[
\PP(L(\gamma) \ge d n \lambda) = e^{-d n I(\lambda) + o(n)}.
\]
When $\omega_x \sim \mathrm{Exp}(1)$, we will use a refinement in the form of the Bahadur--Rao theorem \cite{BR:iidsums}. If $S_N$ is a sum of $N$ independent $\mathrm{Exp}(1)$ random variables, then, uniformly for $\lambda$ in compact subsets of $(1,\infty)$,
\begin{equation}
\label{eq:BRthm}
\PP(S_N \ge N\lambda) = \frac{1}{(\lambda - 1)\sqrt{2\pi N}}e^{-N I(\lambda)}(1 + O(N^{-1})).
\end{equation}
With exponential weights we can compute $I(\lambda) = \lambda - 1 - \log \lambda$.

Constants $c$, $C$ will tend to vary from line to line.

\subsubsection*{Paths with many corners are uncompetitive}
Given a path $\gamma \in \Gamma_n$, we say $x \in \gamma$ is a corner if $x - e_{i_1}, \, x + e_{i_2} \in \gamma$ for some $i_1 \ne i_2$. With a fixed $\gamma$ in mind, let $\hat{x} = x$ if $x$ is not a corner, and otherwise $\hat{x} = x - e_{i_1} + e_{i_2}$. In other words, $\hat{x}$ is the result of ``flipping'' the corner at $x$. We will call $\hat{x}$ the \emph{flip} of $x$. Define the set of corners
\[
D(\gamma):=\{x\in \gamma : x\neq \hat{x}\}.
\]
The number of corners of $\gamma$ is also the number of valid paths differing by a single vertex. A path with many corners has many opportunities to be locally suboptimal. If our heuristic is correct, we should expect that the probability decreases as $\abs{D(\gamma)}$ increases. 

Our first result validates this in the case that the number of corners is linear in $n$. Here and in what follows, write $\cornerpath$ for the top-left corner path $(0,\ldots, 0) \to (0,\ldots,0 ,n)\to (0,\ldots,0,n ,n)\to \dots \to (n,n,\ldots,n, n)$. Observe that $\abs{D(\cornerpath)} = d - 1$ and that this is minimal.

We require two assumptions on the weight distribution, one technical and one only partly so. The first is superexponential decay on the lower-tail of the weights, to avoid unexpectedly small passage times.
\begin{assumption}
\label{ass:lower-tail}
    There exist $\nu > 1$ and $c > 0$ such that     for all $t > 0$, 
    \begin{align*}
        \PP(\omega_x< -t) < e^{-c t^\nu}.
    \end{align*}

\end{assumption}
Note that this assumption is satisfied automatically if $\omega_x \ge 0$ almost surely. We need this assumption for a lower-tail large deviation bound proved in Appendix~\ref{app:lowertailldp}

For the second, recall that $\Lambda(t) = \log \EE\sqrbrac{e^{t \omega_x}}$ is the cumulant generating function of the weight distribution and assume that it is finite in some interval about the origin. 
\begin{assumption}
\label{ass:tiltable}
    Suppose that the time constant $\mu$ is \emph{tiltable}, in the sense that $\mu < \sup_{t \ge 0} \Lambda'(t)$. 
\end{assumption}

We comment on the meaning of this assumption at the end of Section~\ref{sec:proofcorners}. It rules out the critical and supercritical regimes, but is provably satisfied by most sensible distributions in the subcritical regime. In particular, bounded distributions without a large atom on their maximum, or unbounded distributions with exponential moments all finite, both satisfy the assumption.

\begin{theorem}
\label{thm:corners}
Suppose the weight distribution satisfies Assumptions~\ref{ass:lower-tail} and \ref{ass:tiltable}, and fix $\epsilon>0$. There is $C_\epsilon > 0$ such that for all sufficiently large $n$, any path $\gamma \in \Gamma_n$ with $\abs{D(\gamma)} > \epsilon n$ has
\[
\frac{\PP(\gamma \in \Gamma_n^*)}{\PP(\cornerpath\in \Gamma_n^*)} \le C_\epsilon e^{- n / C_\epsilon}.
\]
\end{theorem}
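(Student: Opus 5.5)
The plan is to compare an exponential-Chebyshev upper bound for $\PP(\gamma \in \Gamma_n^*)$, which gains a factor for each corner, against a matching lower bound for $\PP(\cornerpath \in \Gamma_n^*)$ of the form $e^{-dnI(\mu)-o(n)}$.

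\textbf{Upper bound for a general path.} If $\gamma$ is a geodesic, two things hold.
\begin{enumerate}[label=(\roman*)]
    \item $L(\gamma) = L_n$.
    \item For every corner $x \in D(\gamma)$ we have $\omega_x \ge \omega_{\hat x}$, since otherwise flipping the corner would give a strictly better path.
\end{enumerate}
From $D(\gamma)$ I extract a subset $S$ with $|S| \ge \epsilon n / K_d$ whose flips $\hat x$ are pairwise distinct; note that the flips lie off $\gamma$. Fix a small $\eta > 0$ and split
\[
\PP(\gamma \in \Gamma_n^*) \le \PP\bigl(L_n < dn(\mu-\eta)\bigr) + \PP\Bigl(L(\gamma) \ge dn(\mu-\eta),\ \omega_x \ge \omega_{\hat x}\ \forall x \in S\Bigr).
\]
The first term is the lower-tail large deviation of $L_n$. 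Under Assumption~\ref{ass:lower-tail}, the appendix bound gives decay faster than $e^{-Mn}$ for every $M$, so this term is negligible.

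For the second term, take $t>0$ with $\Lambda'(t)=\mu$, which Assumption~\ref{ass:tiltable} permits, and apply exponential Chebyshev:
\[
e^{-tdn(\mu-\eta)}\,\EE\Bigl[e^{tL(\gamma)}\prod_{x\in S}\bone\{\omega_x\ge\omega_{\hat x}\}\Bigr].
\]
The weights along $\gamma$ and at the flips are independent, so the expectation factorises. Each corner contributes $e^{\Lambda(t)}\,q$, where
\[
q = \widetilde{\PP}(\omega \ge \omega').
\]
Here $\omega$ has the $t$-tilted law and $\omega'$ is an independent untilted copy. We have $q<1$ because the weight law is non-degenerate; this is implicit in $\mu > \EE\omega$. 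The result is
\[
e^{-dnI(\mu) + O(\eta n)}\, q^{\epsilon n/K_d}.
\]
Choosing $\eta$ small relative to $\epsilon\log(1/q)$ gives $e^{-dnI(\mu) - c_\epsilon n}$.

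\textbf{Lower bound for the corner path.} It remains to show $\PP(\cornerpath \in \Gamma_n^*) \ge e^{-dnI(\mu) - o(n)}$, and this is the main obstacle. The plan is a change of measure. Tilt only the weights on $\cornerpath$ so that their mean is $\mu + \delta$. Under the tilted measure, the cost is $e^{-dnI(\mu+\delta)-o(n)}$, by Cramér's theorem together with conditioning on a typical event. On that event I need to verify, with probability bounded below, that no competitor beats $\cornerpath$.

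A competitor shares some segments of $\cornerpath$ and leaves it for excursions of lengths $k_1, k_2, \dots$. The key point is that an excursion of length $k$ through untilted weights gains at most $\mu k + o(k)$, by the superadditivity and limit defining $\mu$ together with upper-tail concentration. Meanwhile the corner-path segment it replaces has weight about $(\mu+\delta)k$. The structure of $\cornerpath$ helps here: excursions start and end on the path, and, being at the boundary of the cube, the corner path is adjacent to few directions.

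I would handle this with a union bound over excursion endpoints. Long excursions, of length $\ge K$, are controlled by upper-tail large deviations of point-to-point passage times, whose constraint regions are thin because both endpoints lie on the cube boundary. Short excursions, of length $< K$, are controlled by a positive-probability local condition, imposed via FKG-type positive association or directly on the $O(1)$ neighbourhoods. Finally, let $\delta \to 0$ after $n \to \infty$.

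Combining the two bounds gives the ratio bound $C_\epsilon e^{-n/C_\epsilon}$. The delicate points are uniformity in the excursion union bound and ensuring that the $o(n)$ loss in the lower bound is smaller than $c_\epsilon n$.
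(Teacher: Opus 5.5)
Your proposal is correct and follows essentially the paper's argument. The pieces match: the split on $\{L_n < dn(\mu-\eta)\}$ handled by the superlinear lower-tail bound; the exponential tilt along $\gamma$, which produces an independent factor $\PP(\omega_{\mathrm{tilt}} \ge \omega') < 1$ per corner; and the change-of-measure lower bound for $\cornerpath$ as in Appendix~\ref{app:cornerpathprob}. Tilting exactly to mean $\mu$ and paying $e^{tdn\eta}$ in the Chebyshev step is a slightly cleaner variant of the paper's tilt to $\mu-\alpha$. Your subset $S$ is unnecessary, since distinct corners always have distinct flips lying off $\gamma$. On the uniformity point you flag: an excursion must straddle a corner of $\cornerpath$, so only $O(k)$ endpoint pairs have separation $k$, and a constant cutoff $K$ suffices. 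The paper instead uses a $(\log n)^2$ cutoff with a union bound over all $O(n^2)$ pairs.
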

We make use of a general lower bound on $\PP(\cornerpath\in \Gamma_n^*)$ first established for $d = 2$ in \cite{ABGS:midpoint}. We prove that this bound holds for all $d$, under the simplifying assumption of tiltability, in Appendix~\ref{app:cornerpathprob}.

\begin{corollary}
\label{cor:corners}
Suppose the weight distribution satisfies Assumptions~\ref{ass:lower-tail} and \ref{ass:tiltable}. Write
\[
\Gamma^\ge_n = \set[\big]{\gamma \in \Gamma_n : \PP(\gamma \in \Gamma_n^*) \ge \PP(\cornerpath \in \Gamma_n^*)}
\]
for the set of paths which are at least as likely to appear as geodesics as $\cornerpath$. For any $\epsilon > 0$, it holds for all $n$ large enough that
\[
\abs{\Gamma^\ge_n}\leq e^{\epsilon n} \quad \text{and} \quad \frac{\abs{\Gamma^\ge_n}}{\abs{\Gamma_n}} \le d^{- d n (1 - \epsilon)}.
\]
\end{corollary}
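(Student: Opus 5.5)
The corollary should follow from Theorem~\ref{thm:corners} by a counting argument. Fix $\epsilon>0$ and choose $\delta>0$ small, to be fixed in terms of $\epsilon$ and $d$. By Theorem~\ref{thm:corners} applied with $\delta$ in place of $\epsilon$, every path with $\abs{D(\gamma)}>\delta n$ has
\[
\PP(\gamma\in\Gamma_n^*)\le C_\delta e^{-n/C_\delta}\PP(\cornerpath\in\Gamma_n^*) < \PP(\cornerpath\in\Gamma_n^*)
\]
for $n$ large. This requires $\PP(\cornerpath\in\Gamma_n^*)>0$, which is supplied by the lower bound of Appendix~\ref{app:cornerpathprob}. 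Hence for $n$ large every $\gamma\in\Gamma_n^\ge$ satisfies $\abs{D(\gamma)}\le\delta n$, and it suffices to bound the number of directed paths in $\Gamma_n$ with at most $\delta n$ corners.

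For the count, encode a path by its word in $\{e_1,\dots,e_d\}^{dn}$. A corner is exactly a position where two consecutive letters differ. So a path with $k$ corners is determined by the set of $k$ switch positions among the $dn-1$ gaps, together with the letter of each of the $k+1$ maximal runs. The number of such paths is therefore at most $\binom{dn}{k}d^{k+1}$. Summing over $k\le\delta n$ gives
\[
\abs{\Gamma_n^\ge}\le \sum_{k\le \delta n}\binom{dn}{k}d^{k+1}\le (\delta n+1)\, d^{\delta n+1} e^{dn H(\delta/d)},
\]
where $H$ is the binary entropy in nats and we assume $\delta/d<1/2$. Since $H(\delta/d)+ (\delta/d)\log d\to0$ as $\delta\to0$, we can choose $\delta$ so that the exponent is at most $\epsilon n/2$. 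The polynomial prefactor is then absorbed for large $n$, giving $\abs{\Gamma_n^\ge}\le e^{\epsilon n}$.

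For the ratio, $\abs{\Gamma_n}=\binom{dn}{n,\dots,n}\ge d^{dn}/(dn+1)^{d}$ by the multinomial estimate, since the central multinomial coefficient is the largest of at most $(dn+1)^d$ terms summing to $d^{dn}$. Thus
\[
\frac{\abs{\Gamma_n^\ge}}{\abs{\Gamma_n}}\le e^{\epsilon' n}(dn+1)^d d^{-dn},
\]
and choosing $\epsilon'$ small relative to $\epsilon d\log d$ makes this at most $d^{-dn(1-\epsilon)}$ for large $n$.

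There is no real obstacle here. The only care needed is to choose $\delta$ depending on $\epsilon$ before invoking the theorem, and to confirm that the corner-path probability is strictly positive, so that the strict inequality excludes high-corner paths from $\Gamma_n^\ge$.
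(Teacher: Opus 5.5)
Your proposal is correct and takes essentially the same route as the paper. Like the paper, you apply Theorem~\ref{thm:corners} with a corner-density parameter chosen in terms of $\epsilon$ to confine $\Gamma_n^\ge$ to paths with at most $\delta n$ corners, count those by choosing the corner positions and the direction of each run, and compare the count with $\abs{\Gamma_n} = d^{dn+o(n)}$. The differences are cosmetic: you bound the binomial sum with the entropy function instead of $(eN/k)^k$, and you spell out the multinomial lower bound on $\abs{\Gamma_n}$ and the positivity of $\PP(\cornerpath\in\Gamma_n^*)$, both of which the paper leaves implicit.
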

Observe that neither inequality holds with $\epsilon = 0$.

\subsubsection*{The corner path probability in exponential LPP}
From now on we work in $d = 2$. We consider the integrable case of exponential LPP and give an asymptotic upper bound for the probability of the modal geodesic. We find a lower bound for the probability of seeing $\cornerpath$, which matches the upper bound if we are allowed to assume a certain estimate. 

Although we use various other inputs for the integrable model, we use most crucially two moderate deviation estimates. With $T > 0$ a large constant and $\rho > 0$ a small constant, the following holds uniformly for $T \le t \le \rho n^{2/3}$:
\begin{equation}
\label{eq:lefttail}
    \log \PP(L_n \le 2 \mu n -  t n^{1/3}) = -\frac{1} {192} t^{3} + O(t^4 n^{-2/3}) + O(\log t).
\end{equation}
This asymptotic was proved in \cite{BDMMZ:tail} for geometric weights, but their steepest descent calculation may be carried out for exponential weights also. See \cite{L:iteratedlog} for an indication of the proof in the exponential case.

It will be important also to know the lower-tail moderate deviations for point-to-line times. A completely optimal expansion comparable to \eqref{eq:lefttail} does not appear in the literature, but a lower bound of the correct order has been established in \cite[Theorem 2]{BGHK:lpp-lis}. We prefer to state it as follows. There is $c_l > 0$ such that, with $T > 0$ a large constant and $\rho,\, \zeta > 0$ small constants, one has uniformly for $T \le t \le \rho n^{1/6 + \zeta}$ that
\begin{equation}
    \label{eq:p2llefttail}
    \log \PP(L^{\bltriangle}(0, 0; n) \le 2 \mu n -  t n^{1/3}) \ge - c_l t^{3} +  o(t^3).
\end{equation}
One expects the sharp value of $c_l$ to be $\frac{1}{48}$, and indeed this would follow from \cite[Theorem 1.6]{BBBK:tail-bounds}, except that for technical reasons their range of permitted values of $t$ is too restrictive. (They allow $t = O(n^{1/10})$, but we need $t = O(n^{1/6})$.) We record this as an explicit assumption.

\begin{assumption}
\label{ass:p2llefttail}
    Suppose that the inequality \eqref{eq:p2llefttail} holds with $c_l = \frac{1}{48}$.
\end{assumption}

Write $\gamma^*_n$ for the almost surely unique geodesic under these continuous weights.
\begin{theorem}
\label{thm:probability}
Suppose $d = 2$ and  $\omega_x \sim \mathrm{Exp}(1)$, and let $I(\lambda) = \lambda - 1 - \log \lambda$ be the large deviation rate function for the sum of i.i.d.\ exponential random variables. Then 
\[
\liminf_{n \to \infty} \frac{\log \PP (\cornerpath = \gamma^*_n) + 2n I(\mu)}{\sqrt{2 n}} \ge \frac{1}{3 \sqrt{3 c_l}}
\]
and
\[
    \limsup_{n \to \infty} \frac{\max_{\gamma \in \Gamma_n}\log \PP (\gamma = \gamma^*_n) + 2n I(\mu)}{\sqrt{2 n}} \le \frac{4}{3}
\]
Under Assumption~\ref{ass:p2llefttail}, we have further that
    \begin{align*}    
        &\lim_{n \to \infty} \frac{\log \PP (\cornerpath = \gamma^*_n) + 2n I(\mu)}{\sqrt{2 n}} \\
        &\hspace{3.5em}= \lim_{n \to \infty} \frac{\max_{\gamma \in \Gamma_n}\log \PP (\gamma = \gamma^*_n) + 2n I(\mu)}{\sqrt{2 n}}\\
        &\hspace{3.5em} = \frac{4}{3}.
    \end{align*}
\end{theorem}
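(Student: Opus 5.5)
The plan is to reduce both bounds to a single one-parameter optimisation. On one side is the Bahadur--Rao cost \eqref{eq:BRthm} for the sum of the $2n+1$ weights on a path. On the other is a moderate-deviation lower-tail cost for the competing passage times. Here $\mu=2$, so $L_n\approx 4n$, and $I'(2)=\tfrac12$. Lowering the target $4n$ for $L(\gamma)$ by $t n^{1/3}$ therefore gains a factor $e^{t n^{1/3}/2}$ in \eqref{eq:BRthm}; the $\sqrt{N}$ prefactor is only polynomial and is irrelevant at scale $\sqrt n$. If the competition is forced to lie below $4n-tn^{1/3}$ at cost $e^{-c t^3}$, the exponent is
\[
-2nI(2)+\tfrac12 t n^{1/3}-c t^3 .
\]
This is maximised at $t=\sqrt{n^{1/3}/(6c)}$, which is of order $n^{1/6}$ and so lies in the ranges where \eqref{eq:lefttail} and \eqref{eq:p2llefttail} hold. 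The maximal value is $\sqrt n/(3\sqrt{6c})$. With $c=\tfrac1{192}$ this equals $\tfrac43\sqrt{2n}$, and with effective constant $c=c_l/4$ it equals $\sqrt{2n}/(3\sqrt{3c_l})$. Both claimed constants come out of this computation, and with $c_l=\tfrac1{48}$ they agree.

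\emph{Lower bound.} I will use the decomposition $L(0,0;n,n)\le L^{\bltriangle}(0,0;n)+L^{\trtriangle}(n+1;n,n)$, applied with the corner path's own weights removed. Set $\omega^\circ$ equal to $\omega$ off $\cornerpath$ and to $0$ on it. Every competitor $\gamma\neq\cornerpath$ leaves the left column at some height $j$ and rejoins the top row at some $k$. Hence $\cornerpath=\gamma_n^*$ as soon as, for every such $j<k$-type excursion, the weight of $\cornerpath$ along the segment from $(0,j)$ to $(k,n)$ exceeds the $\omega^\circ$-passage time between the adjacent points.

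I will aim for the sufficient event that the two triangle passage times of $\omega^\circ$, taken from the two halves of the path, are each at most $2n-\tfrac t2 n^{1/3}$. By \eqref{eq:p2llefttail} each costs $e^{-c_l(t/2)^3\cdot 2^{\,\cdot}}$, with the scaling $n\mapsto n$ per half producing effective constant $c_l/4$ overall. Independently, the corner path's weights should have total at least $4n-tn^{1/3}$ and partial sums that dominate the $\omega^\circ$ competition on every sub-segment. The competition on a segment of $m$ steps near an end is $2m+O(m^{1/3})$, while under the tilted (mean-$2$) measure the corner walk fluctuates by $\sqrt m$. The partial-sum constraint should therefore cost only a sub-$\sqrt n$ factor, via a ballot-type or random-walk estimate with a small extra drift $n^{-1/3}$. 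The event on $\omega^\circ$ is decreasing and independent of the corner weights, so the probabilities multiply, and choosing the optimal $t$ gives the first inequality.

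\emph{Upper bound.} Fix any $\gamma\in\Gamma_n$. I will bound $\PP(\gamma=\gamma_n^*)\le\PP(L(\gamma)\ge L_n)$ and split on the dyadic value $s$ of $4n-L_n$ in units of $n^{1/3}$. The first step is to decouple $L(\gamma)$ from $L_n$. For this I compare $L_n$ with $L_n^{(\gamma)}$, the passage time with the weights on $\gamma$ set to zero; the latter is independent of $L(\gamma)$ and satisfies $L_n^{(\gamma)}\le L_n$. Then
\[
\PP(L(\gamma)\ge L_n)\le\sum_s \PP\bigl(L(\gamma)\ge 4n-(s+1)n^{1/3}\bigr)\,\PP\bigl(L_n^{(\gamma)}\le 4n-sn^{1/3}\bigr).
\]
The second factor should be comparable to \eqref{eq:lefttail}, which gives $e^{-s^3/192+O(s^4n^{-2/3})}$. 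Summing with Bahadur--Rao and optimising over $s$ then yields $\tfrac43$. Under Assumption~\ref{ass:p2llefttail} the two bounds meet, which gives the equalities.

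The main obstacle is the decoupling step: showing that zeroing the weights on an arbitrary fixed $\gamma$ does not improve the lower tail of $L_n$ beyond a $e^{o(\sqrt n)}$ factor in the relevant regime $s\asymp n^{1/6}$. I would first try monotone coupling. Conditionally on $L_n^{(\gamma)}$ being small, $L_n-L_n^{(\gamma)}$ is at most the $\omega$-weight that the $\omega^\circ$-geodesic could collect on $\gamma$. Because $\gamma$ is fixed and geodesics have transversal fluctuation $n^{2/3}$, the overlap should be small, and I would bound it via the Harris/FKG inequality together with a union bound over the $O(n^2)$ entry and exit points. If this fails, the fallback is to restrict competitors to paths at distance at least $1$ from $\gamma$ in a strip, and to quote the lower tail for such constrained passage times.
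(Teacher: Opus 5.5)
The optimisation of $\tfrac12 tn^{1/3}-ct^3$ is right and gives both constants. But the lower-bound event you describe does not exclude shortcuts, and the step you call cheap is exactly where the paper needs Lemmas~\ref{lem:cornerdomination} and~\ref{lem:intervallefttail}.

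With the column zeroed, $L^{\bltriangle}_{\omega^\circ}(0,0;n)=\max\bigl(0,\max_{j}L^{\bltriangle}(1,j;n)\bigr)$. So your triangle event only gives $L^{\bltriangle}(1,j;n)\le 2n-D$ for each departure height $j$, where $D=\tfrac t2 n^{1/3}\asymp\sqrt n$. A shortcut leaving the column at $(0,j)$ is excluded only if $L^{\bltriangle}(1,j;n)\le\sum_{i=j+1}^{n}\omega_{(0,i)}\approx 2(n-j)-(\text{deficit remaining above }j)$. The free ride up the zeroed column has offset $0$ rather than $\approx 2j$, so your bound falls short by about $2j$.

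This matters because of where the deficit must go. It cannot sit on $O(\sqrt n)$ weights: relaxing $K$ weights by a total of $D$ gains only $D/2-D^2/(8K)$, since $I''(2)=\tfrac14$. It also should not be a uniform drift, which would force the bulk times from essentially all departure heights into their lower tails at once. It has to be carried by $K\gg\sqrt n$ weights near the two endpoints of $\cornerpath$; the paper takes $K=n^{3/5}\le n^{2/3-\epsilon}$.

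You then need $\max_{j\le K}\bigl[L^{\bltriangle}(1,j;n)-2(n-j)\bigr]\le -D$ at the same cost $c_l(t/2)^3(1+o(1))$ as a single starting point. This does not follow from \eqref{eq:p2llefttail} alone. It is Lemma~\ref{lem:intervallefttail}, which dominates all these times by one point-to-line time from $(-b,-b)$ with $b=n^{1-\epsilon}$; its offset $\mu(b,b+k)$ matches $(2b+k)\mu$ up to $k^2/4b\le n^{1/3-\epsilon}$.

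Away from the ends, your ballot estimate handles only the column walk. You also need $L^{\bltriangle}_k\le k\mu$ for all such $k$ simultaneously with probability $e^{-o(\sqrt n)}$. The paper obtains this from Harris plus Tracy--Widom and coalescence over $O(n^{1/3+\epsilon})$ blocks. The column and bulk probabilities factor only once deterministic thresholds of this kind are fixed.

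In the upper bound, the obstacle you identify is self-inflicted. $\{L(\gamma)\ge a\}$ is increasing and $\{L_n\le a+1\}$ is decreasing in the weights. So the Harris inequality gives $\PP(L(\gamma)\ge a,\,L_n\le a+1)\le\PP(L(\gamma)\ge a)\,\PP(L_n\le a+1)$ for $L_n$ itself, uniformly in $\gamma$. Then \eqref{eq:lefttail} applies directly, which is the paper's argument. The lower-tail comparison between $L_n^{(\gamma)}$ and $L_n$, which you leave open for general $\gamma$, is unnecessary. Keep the integer mesh in $s$ from your display rather than dyadic blocks: a dyadic block at $s\asymp n^{1/6}$ loses a factor $e^{c\sqrt n}$ in the Bahadur--Rao term.
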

The expression $\PP (\cornerpath = \gamma^*_n) \approx e^{-2n I(\mu)}$, without the $O(\sqrt{n})$ correction, has already been established by \cite{ABGS:midpoint}. For exponential weights the value $\mu = 2$ is known, and here $I(\mu) = 1 - \log 2$. Thus under Assumption~\ref{ass:p2llefttail}, we may write
\begin{equation}
    \label{eq:cornerprob}
   \begin{split}
        \PP (\cornerpath = \gamma^*_n) &=  \brac[\Big]{\frac{2}{e}}^{2n - \kappa \sqrt{2n} + o(\sqrt n)},\\
        \max_{\gamma \in \Gamma_n} \PP (\gamma = \gamma^*_n) &= \brac[\Big]{\frac{2}{e}}^{2n - \kappa \sqrt{2n} + o(\sqrt n)},
       \end{split}
\end{equation}
where $\kappa = \frac{4}{3(1 - \log 2)}$.

With the relatively precise information from Theorem \ref{thm:probability}, we can extend Theorem \ref{thm:corners} to the case of paths with corners growing like $n^{1/2 + \epsilon}$. We can also compare probabilities for certain paths with a constant number of corners. For example, let $\gamma^k_n$ be the \emph{staircase} path with $2k$ segments,
\[
\gamma^k_n = (0, 0) \to (0, \floor{n / k}) \to (\floor{n / k}, \floor{n / k}) \to \cdots \to (n, n).
\]
Here the segments are alternately vertical and horizontal of length $\floor{n / k}$ or $\ceil{n / k}$. Note that $\abs{D(\gamma^k_n)} = 2k - 1$ does not grow with $n$.

\begin{corollary}
\label{cor:staircases}
Assume $d=2$. Suppose $\omega_x \sim \mathrm{Exp}(1)$ and take $k \ge K$, where $K$ is a deterministic constant. For all $n$ large enough,
    \[
        \PP (\cornerpath = \gamma^*_n) > \PP (\gamma^k_n = \gamma^*_n).
    \]
\end{corollary}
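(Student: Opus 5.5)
The plan is to combine the unconditional lower bound of Theorem~\ref{thm:probability} for $\cornerpath$ with a direct upper bound on $\PP(\gamma^k_n = \gamma^*_n)$. The upper bound should beat the corner path by a term of order $\sqrt{n}$ in the exponent once $k$ is large.

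By Theorem~\ref{thm:probability},
\[
\log \PP(\cornerpath = \gamma^*_n) \ge -2nI(\mu) + \Big(\tfrac{1}{3\sqrt{3c_l}} - o(1)\Big)\sqrt{2n}.
\]
It therefore suffices to show that
\[
\log \PP(\gamma^k_n = \gamma^*_n) \le -2nI(\mu) + \big(a(k) + o(1)\big)\sqrt{n}
\]
with $a(k) \to -\infty$ as $k \to \infty$, or at least $a(k) < \sqrt{2}/(3\sqrt{3c_l})$ for large $k$.

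The mechanism is (B) together with (A). If $\gamma^k_n$ is the geodesic, then each segment, and more usefully each corner, must beat its local competitors. I would decompose the event $\{\gamma^k_n = \gamma^*_n\}$ as follows. At each of the $2k-1$ corners, consider the alternative paths which follow $\gamma^k_n$ up to distance $m \approx \sqrt{n}$ before the corner, then leave it and rejoin it distance $m$ after the corner. For $\gamma^k_n$ to be geodesic, the weight it collects on each such stretch of length $2m$ must exceed the point-to-point passage time across the $m\times m$ square cut off at that corner, excluding the corner path itself. These local events are almost independent of the bulk requirement $L(\gamma^k_n) \ge L_n$.

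Concretely, I would bound
\[
\PP(\gamma^k_n = \gamma^*_n) \le \PP\Big(L(\gamma^k_n) \ge \textstyle\sum_j L(\text{bulk pieces}) + \sum_{\text{corners}} L'(\text{corner squares})\Big),
\]
where $L'$ is the passage time across a corner square computed off $\gamma^k_n$. This is a sum of independent passage times over disjoint regions, so the right-hand side is at least the maximal path through those regions, which is roughly $\mu\cdot 2n - O(\text{deficit})$. I would then exponentially tilt the weights on $\gamma^k_n$ with parameter $t$ satisfying $\Lambda'(t) = \mu$. This mirrors the argument giving the upper bound $4/3$ in Theorem~\ref{thm:probability}: the $\sqrt{n}$ gain there comes from letting the global time be lower by $sn^{1/3}$ at cost $s^3/192$ via \eqref{eq:lefttail}, optimized against the tilt gain $e^{t\cdot(\text{deficit})}$.

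For the staircase, I would repeat that optimization separately for each of the $k$ roughly independent blocks of size $n/k$. In each block the competing environment has its own lower-tail cost. Summing, the $k$ blocks allow a total deficit $D$ at cost $\sum_j s_j^3/192$ with $\sum_j s_j (n/k)^{1/3} = D$. This is actually cheaper, so it gives no help by itself. The decisive extra cost must come from the corners. At each of the $2k-1$ corners, an $m \times m$ region with $m \sim \sqrt{n}$ must have its off-path passage time depressed below the tilted path weight. This costs roughly a fixed amount $c\sqrt{n}$ per corner, by the same scale balance: a tilt gain of $t\cdot m^{1/3}s$ against a lower-tail cost of $s^3$. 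Making this precise, the per-corner penalty $-c\sqrt{n}$ accumulates to $-(2k-1)c\sqrt{n}$, while the block decomposition gains at most $O(\sqrt{n})$ uniformly in $k$. So $a(k) \le C - c(2k-1)$, which becomes negative for $k \ge K$.

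The main obstacle is quantifying the corner penalty rigorously. One must show that near a corner, conditioned on the heavily tilted path, the off-path competitor gains at least $\Omega(n^{1/4})$ per corner relative to the straight segments, contributing a factor $e^{-c\sqrt{n}}$ that cannot be absorbed. I would first try a crude version: use only the single flipped path at each corner, plus independence between the disjoint corner neighbourhoods and the bulk. If that yields only $O(1)$ per corner, I would instead use the full $m\times m$ competitor with its typical $2\mu m$ passage time versus $\mu\cdot 2m$ on the path. That gives a deficit of order $m^{1/3}$ fluctuations, which the tilt converts into cost. The scale $m$ would be chosen to balance this against the global $\sqrt{n}$ terms.
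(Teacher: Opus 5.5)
There is a genuine gap: the per-corner penalty of order $\sqrt n$, on which your bound $a(k)\le C-c(2k-1)$ rests, does not exist.

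Take as baseline the cost $e^{-2nI(\mu)}$, in which the weights on $\gamma^k_n$ are tilted to mean $\mu$. Under this tilt, a stretch of length $2\ell$ around a corner carries weight $2\mu\ell$ with Gaussian fluctuations of order $\sqrt{\ell}$. The off-path competitor across the $\ell\times\ell$ square has mean at most $2\mu\ell$ and fluctuations of order only $\ell^{1/3}$. So at any single scale the corner constraint holds with probability bounded below.

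Your scale balance does not create a cost either. The quantity $\max_s\bigl(\tfrac12 s\,m^{1/3}-c s^3\bigr)$ is positive: it is the largest \emph{gain} available from optionally weakening the path near that corner. With $m\sim\sqrt n$ it is of order $n^{1/4}$, not $\sqrt n$. A construction in the spirit of Lemma~\ref{lem:cornerdomination}, run at every corner, should in fact give $\log\PP(\gamma^k_n=\gamma^*_n)\ge -2nI(\mu)-o(\sqrt n)$ for each fixed $k$. So no $a(k)<0$ can be proved.

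The single-inequality reduction cannot help either. Any single competitor $X$ built by concatenation satisfies $X\le L_n$. The resulting bound is therefore never better than the one from \eqref{eq:maxunionbound}, which gives $\tfrac43$ uniformly over all paths.

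What has to be shown is that the staircase's \emph{positive} gain is smaller than that of $\cornerpath$. This requires many constraints imposed at once, as in \eqref{eq:staircaseinclusion}. The paper places $Q=m^{1/10}$ nested bulk competitors $L^i_j$, $\tilde L^i_j$ across every outer and inner corner. Their endpoints are $m^{9/10}$ apart, so they may be treated as independent (Appendix~\ref{app:transflucts}). A deficit on a subsegment near any corner then enters of order $m^{1/10}$ cubic penalties, so weakening it is worth only $O(m^{9/20})=o(\sqrt n)$.

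Only the first half of the first segment and the last half of the last segment escape this. They are controlled by the half-space competitors $L^{\tltriangle}_0$, $L^{\tltriangle}_{k-1}$ and the global $L^{\brtriangle}$, with combined cubic coefficient $c_h(k+1)/k$. Lemma~\ref{lem:targetbound} then caps the gain at $\sqrt{2n}/\bigl(3\sqrt{3c_h(k+1)}\bigr)$, which falls below $\sqrt{2n}/\bigl(3\sqrt{3c_l}\bigr)$ once $k\ge c_l/c_h$.

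So the corners matter not by imposing a penalty but by removing the benefit of weakening interior weights. What remains to weaken is only end segments of length of order $n/k$.
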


To prove Corollary \ref{cor:staircases}, we find an upper bound on the probability of seeing a staircase path which is asymptotically smaller than the one given by Theorem~\ref{thm:probability}. Under Assumption~\ref{ass:p2llefttail} and if the sharp moderate deviation probabilities for half-space passage times established in \cite{BBBK:tail-bounds} are assumed to hold across a wide enough range, then our argument works for $k \ge 8$. The set of paths we consider could be expanded somewhat, but our crude method of producing a bound does not admit a significant degree of generality.

\subsubsection*{Monotonicity of path probabilities}
Lastly, we look at weights $\omega_x \sim \mathrm{Ber}(p)$ for small values of $p$. In this setting, questions of path probabilities boil down to certain tractable counting problems, and we may give the complete picture. 

Path probabilities are expected to be monotonic in that paths closer to the corners (in some sense) ought to occur more often. We give an ordering which we conjecture captures this monotonicity. 

It will be easiest to define the ordering in terms of the following encoding of paths, whereby we describe $\gamma \in \Gamma_n$ as a sequence $(a_1, a_2, \dots, a_r)$. The signs of the entries alternate and the sums of the positive and negative parts are both $n$. A positive value of $a_1$ represents the length of an initial vertical segment of $\gamma$ starting from the origin, with a negative value representing the length of an initial horizontal segment. Then $a_2$ is the length of the subsequent segment (with sign indicating the direction), and so on. For example, $\cornerpath$ would be encoded as $(n, -n)$, and a staircase path $\gamma^k_n$ would be $(n / k, - n / k, \dots, n / k, - n / k)$, when $k$ divides $n$. Observe that the length of the sequence $r$ is exactly $\abs{D(\gamma)} + 1$. 

Define also the partial sums $s_k = \sum_{i = 1}^{k}a_i$, which give the distance of the path from the diagonal at the $k$-th corner, and set $s_0 = a_0 =0$. We consider two moves transforming a path $\gamma$.

\begin{itemize}
    \item A \emph{bump} move, smoothing out a concavity lying on one side of the diagonal. If $s_k \ge 0$, $a_{k} < 0$ and $k \le r - 2$, then we replace 
    \begin{multline*}
        (a_1, \dots, a_k, \dots, a_r) \\ \mapsto (a_1, \dots, a_{k - 2}, a_{k - 1} + a_{k + 1}, a_{k} + a_{k + 2}, a_{k + 3}, \dots, a_{r}).
    \end{multline*}
    This has the effect of taking a south-east-facing concavity above the diagonal and filling it in, removing two corners in the process. The move can also be performed under the symmetric situation below the diagonal, when $s_k \le 0$, $a_{k} > 0$ and $k \le r - 2$.

    \item An \emph{unwinding} move, reducing the number of crossings of the path across the diagonal. If $l < k$ are such that $s_l,\, s_k > 0$ but $s_i < 0$ for $l + 1 \le i \le k - 1$, then we replace
    \begin{multline*}
        (a_1, \dots, a_l, \dots, a_k, \dots, a_r) \\\mapsto (a_1, \dots, a_{l} + a_{k}, a_{k - 1}, \dots, a_{l + 2}, a_{k + 1} + a_{l + 1}, a_{k + 2}, \dots, a_{r}).
    \end{multline*}
    Here we take the corners just before and just after a path dips below the diagonal, and reflect it around the line joining those corners. The same move can be performed when the path peaks above the diagonal, where $s_l,\, s_k < 0$ but $s_i > 0$ for $l + 1 \le i \le k - 1$. As a boundary case, we also allow $l = 0$, with $s_0 = 0$, where we interpret the displayed replacement with $a_0=0$ and omitting nonexistent terms before $a_1$.
\end{itemize}

\begin{figure}
    \centering
    \resizebox{\textwidth}{!}{\input{fig_bump}}
    \caption{Bumping a corner of a path. The concavity formed by $a_4$ and $a_5$ does not lie entirely above the diagonal and may not be bumped.}
    \label{fig:pathbump}
\end{figure}

\begin{figure}
    \centering
    \resizebox{\textwidth}{!}{\input{fig_unwind}}
    \caption{Unwinding a path about two corners. Observe that the order of edges is reversed in the modified segment.}
    \label{fig:pathunwind}
\end{figure}

Figures \ref{fig:pathbump} and \ref{fig:pathunwind} illustrate the operations. We expect that these operations will strictly increase the probability that a path appears as a geodesic. We can prove this in the small $p$ regime. Write $\pi \succ \gamma$ if $\pi$ may be obtained from $\gamma$ by performing these moves repeatedly. 

\begin{theorem}
\label{thm:smallpmono}
Assume $d=2$. There is $p_{n} > 0$ such that if $\omega_x \sim \mathrm{Ber}(p)$ with $0 < p \le p_n$, we have for all paths $\pi,\, \gamma \in \Gamma_n$ the implication
    \begin{equation}
        \label{eq:MostLikelyPath}
        \pi \succ \gamma \Rightarrow \PP(\pi \in \Gamma^*_n ) \ge \PP(\gamma \in \Gamma^*_n).
    \end{equation}
    Moreover, this inequality is strict unless $\pi = \gamma$.
\end{theorem}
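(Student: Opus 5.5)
The plan is to expand both probabilities in powers of $p$ and compare them at the first order where they differ. Set $N=(n+1)^2$ and $V=\bbrac{0,n}^2$. With Bernoulli weights, $\gamma\in\Gamma^*_n$ is an event determined by the set $S=\{x:\omega_x=1\}$. Precisely, $\gamma\in\Gamma_n^*$ iff $\abs{\gamma\cap S}$ equals the size of the longest chain in $S$ for the coordinatewise order. Hence
\[
\PP(\gamma\in\Gamma^*_n)=\sum_{k=0}^{N}A_k(\gamma)\,p^k(1-p)^{N-k},
\]
where $A_k(\gamma)$ counts the $k$-subsets $S\subseteq V$ whose longest chain is attained by $\gamma\cap S$. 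This is a polynomial in $p$. If two paths have $A_k$ agreeing for $k<k_0$ and $A_{k_0}(\pi)>A_{k_0}(\gamma)$, then $\PP(\pi\in\Gamma^*_n)>\PP(\gamma\in\Gamma^*_n)$ for all $p$ below some threshold depending on the pair. There are finitely many pairs for fixed $n$, so taking the minimum threshold gives $p_n$. Also $\succ$ is generated by single moves, and strict inequality is transitive. So it suffices to show that one bump or one unwinding move strictly increases the first differing coefficient.

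Next I compute the low-order coefficients. $A_0=1$ for every path. $A_1=2n+1$ for every path, since a single point must lie on $\gamma$. For $k=2$: all pairs inside $\gamma$ count, as they are comparable. A pair with exactly one point $x\in\gamma$ counts iff the other point $y\notin\gamma$ is incomparable to $x$. Pairs disjoint from $\gamma$ never count. Write $f(x)$ for the number of points of $V$ comparable to $x=(a,b)$, namely $(a+1)(b+1)+(n-a+1)(n-b+1)-1$. Every point of $\gamma$ is comparable to $x$, so
\[
A_2(\gamma)=\binom{2n+1}{2}+\sum_{x\in\gamma}\brac[\big]{N-f(x)}.
\]
A path has exactly one vertex $x_t$ on each antidiagonal $a+b=t$. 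On that antidiagonal, $f$ is a concave quadratic in $a$, symmetric about $a=t/2$. Thus $N-f(x_t)=g_t(\abs{a-b})$ with $g_t$ strictly increasing on the attainable values. So $A_2(\gamma)=C_n+\sum_t g_t(\abs{d_t(\gamma)})$, where $d_t$ is the signed distance from the diagonal at level $t$. In this parameterization $d_t$ is the running sum of the step signs, and at corners it equals the partial sums $s_k$. If this $A_2$ comparison is strict for every move, no higher coefficients are needed.

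For a bump move, the modified path agrees with $\gamma$ outside the filled concavity. Inside it, the new path lies weakly further from the diagonal on the same side at each level, and strictly further at the levels strictly inside the filled region. Since each $g_t$ is strictly increasing, $A_2$ strictly increases.

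For an unwinding move the path changes on the levels between corner $l$ (at level $t_l$) and corner $k$ (at level $t_k$). The vertex at level $t$ is replaced by the point-reflected vertex at level $t_l+t_k-t$, with new signed distance $s_l+s_k-d_{t_l+t_k-t}$. Here $g_t$ genuinely depends on $t$, so a level-by-level comparison needs care. I would first try to show $\abs{s_l+s_k-d_{t'}}\ge\abs{d_t}$ pointwise under this map, with strict inequality at least at one level. Where $d<0$ this is clear, since then $s_l+s_k-d>\abs{d}$. The segment from corner $l$ to the first crossing is monotone from $s_l$ to $0$, and similarly the final segment rises from $0$ to $s_k$. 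The remaining issue is these positive stretches near the ends, where distance values are swapped between the two ends. There I would instead pair levels with equal $\abs{d}$ and verify the inequality after accounting for the $t$-dependence, expanding $g_t(m)$ explicitly: it has the form $\tfrac12 m^2$ plus a term linear in $t$ that cancels under the pairing. I expect this to be the main obstacle. If a strict gain at order $2$ fails in some symmetric configuration (for example $l=0$), the fallback is to show $A_2$ is unchanged there and locate the first strict difference at $k=3$, counting triples with one or two points on $\gamma$.
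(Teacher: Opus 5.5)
Your overall architecture is the paper's. You expand in $p$, observe that the first shape-dependent coefficient is at $k=2$, where $A_2(\gamma)-\binom{2n+1}{2}$ is exactly the paper's $C^{1,1}_\gamma$, and show that each move strictly increases it. Your threshold over finitely many pairs is fine for existence; the paper instead gets an explicit $p_n$ by bounding the $k\ge 3$ remainder. Where you differ is in evaluating $C^{1,1}$ antidiagonal by antidiagonal rather than through the signed-area/Green formula of Lemma~\ref{lem:areaformula}, and your bump argument is complete.

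The unwinding case, however, is not proved, and the route you try first cannot work. The levelwise inequality $\abs{s_l+s_k-d_{t_l+t_k-t}}\ge\abs{d_t}$ is false in general. Take $n=11$ and $\gamma=(1,-7,5,-1,1,-1,1,-1,3,-1)$, unwound about $l=1$, $k=9$. The new path is $(4,-1,1,-1,1,-1,5,-8)$. At level $8$ the old distance is $d_8=-6$, while the new one is $4$: a deep early dip has been mirrored onto a shallow late stretch. Your remark that negative levels are ``clear'' compares a vertex with its own reflection, which sits at a different level. It therefore only helps once the $t$-dependence is handled, and that is exactly what you leave open (and mis-state as ``linear in $t$'').

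The missing observation is that the $t$-dependence is harmless. For $x_t=(a,b)$ with $a+b=t$ and $d_t=b-a$,
\[
N-f(x_t)=a(n-b)+(n-a)b=\tfrac12 d_t^2+\tfrac12 t(2n-t).
\]
The second term is quadratic in $t$, but it depends only on $t$, and every path has exactly one vertex on each level $t=0,\dots,2n$. Hence $A_2(\gamma)=\mathrm{const}+\tfrac12\sum_t d_t(\gamma)^2$, and no pairing of levels is needed.

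For an unwinding, set $S=s_l+s_k>0$, with $s_0=0$ in the boundary case. Reindexing the reflected stretch by $\tau=t_l+t_k-t$ gives
\[
A_2(\hat\gamma)-A_2(\gamma)=\tfrac12\sum_{\tau=t_l}^{t_k}\bigl[(S-d_\tau)^2-d_\tau^2\bigr]=\tfrac S2\sum_{\tau=t_l}^{t_k}(S-2d_\tau).
\]
Every level with $d_\tau\le 0$ contributes at least $S$, and there are at least two such levels, namely the two zero crossings. The only levels with $d_\tau>0$ are the runs $s_l,\dots,1$ and $1,\dots,s_k$. These contribute
\[
s_l(S-s_l-1)+s_k(S-s_k-1)=2s_ls_k-S.
\]
So the sum is at least $S+2s_ls_k>0$, strictly, including the case $l=0$, and no fallback to $k=3$ is needed.

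With this step supplied, your level formula gives a cleaner proof than the paper's area formula. Both moves reduce to comparing $\sum_t d_t^2$.
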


We do not obtain a uniform lower bound for $p_n$, and indeed the lower bound furnished by our proof decays polynomially in $n$.

Observe that $\cornerpath$ and its reflection $\rcornerpath$ are the only maximal elements under the $\succ$ ordering and at least one of these can be obtained beginning from a given path. To see this, consider unwinding until the path lies on one side of the diagonal, and then bumping the corners until it is a corner path. The theorem then implies that the probabilities are also maximal.
\begin{corollary}
    \label{cor:smallpmode}
    There is $p_{n} > 0$ such that if $\omega_x \sim \mathrm{Ber}(p)$ with $0 < p \le p_n$, then for any path $\gamma$,
    \begin{equation*}
        \PP(\cornerpath \in \Gamma^*_n ) \ge \PP(\gamma \in \Gamma^*_n).
    \end{equation*}
    This inequality is strict unless $\gamma \in \set{\cornerpath, \rcornerpath}$.
\end{corollary}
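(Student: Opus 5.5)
The statement is Corollary~\ref{cor:smallpmode}, and I would deduce it directly from Theorem~\ref{thm:smallpmono} together with a purely combinatorial fact about the ordering $\succ$. The fact is: for every $\gamma \in \Gamma_n$, one of $\cornerpath$ or $\rcornerpath$ is reachable from $\gamma$ by a finite sequence of bump and unwinding moves. I would take $p_n$ to be the same threshold as in Theorem~\ref{thm:smallpmono}. By the reflection symmetry $(x_1,x_2)\mapsto(x_2,x_1)$ of the i.i.d.\ field, $\PP(\cornerpath \in \Gamma^*_n) = \PP(\rcornerpath \in \Gamma^*_n)$. Hence, once either corner path satisfies $\pi \succ \gamma$, Theorem~\ref{thm:smallpmono} gives the inequality. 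It is strict unless $\gamma$ equals that corner path, and equality holds for $\gamma = \rcornerpath$ by symmetry.

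For the combinatorial fact I would argue in two phases, using the encoding $(a_1,\dots,a_r)$ with partial sums $s_k$.

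\emph{Phase 1 (unwinding).} Suppose the path visits both strict sides of the diagonal. Then I would find $l<k$ with $s_l, s_k$ of one sign and $s_i$ of the other sign strictly in between, allowing $l=0$ with $s_0=0$. The one subtlety is when the sign change is only at the start, where $s_0 = 0$ and the boundary case covers it. The unwinding move reflects that excursion to the other side. I would check that this strictly decreases the number of indices $i$ with $s_i$ on the minority side, or the number of sign changes of $(s_i)$. Since $r$ is finite, after finitely many moves the path lies weakly on one side of the diagonal, say $s_i \ge 0$ for all $i$.

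\emph{Phase 2 (bumping).} Now the path lies above the diagonal. If $r > 2$, there is some $k \le r-2$ with $a_k < 0$, and since $s_k \ge 0$ the bump move applies. It reduces $r$ by $2$ and keeps all remaining partial sums nonnegative, because the new partial sums are a subset of the old ones. Iterating, we reach $r=2$ with $a_1>0$, that is, $(n,-n) = \cornerpath$. In the case $s_i \le 0$ throughout, the symmetric argument gives $\rcornerpath$.

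The main obstacle I expect is checking that each unwinding step genuinely makes progress, so that the Phase 1 process terminates. The cleanest potential to use is the number of maximal runs of consecutive partial sums lying strictly below the diagonal: each unwinding removes one such run and creates none, since the reflected segment lands on the other side. For strictness I would also confirm that any $\gamma \notin \{\cornerpath,\rcornerpath\}$ admits at least one move, which is immediate from the two phases above.
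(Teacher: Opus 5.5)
Your route is the paper's: Theorem~\ref{thm:smallpmono}, the reflection symmetry $\PP(\cornerpath\in\Gamma^*_n)=\PP(\rcornerpath\in\Gamma^*_n)$, and reachability of a corner path by unwinding to one side and then bumping. The symmetry step and Phase~2 are fine. The gap is the existence claim at the heart of Phase~1, which is false as stated.

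\textbf{Why Phase~1 can stall.} The unwinding move needs \emph{strict} signs. It requires $s_l,s_k>0$, with only $l=0$ relaxed, and $s_i<0$ for every $l<i<k$, with $k\ge l+2$ implicitly. So an interior corner lying on the diagonal spoils every excursion that contains it. Take $n=3$ and the path $(1,-1,1,-2,1)$, i.e.\ $(0,0)\to(0,1)\to(1,1)\to(1,2)\to(3,2)\to(3,3)$, with $(s_1,\dots,s_5)=(1,0,1,-1,0)$. It visits both strict sides of the diagonal. Yet one checks that no pair $l<k$ satisfies the unwinding hypotheses. For instance, the natural candidates $(l,k)=(1,3)$ and $(0,4)$ are both blocked by the diagonal corner $s_2=0$ between them. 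The paper's one-line sketch passes over the same point, but you assert the claim explicitly, so it must be repaired.

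\textbf{The repair.} Handle diagonal corners with a bump before unwinding.
\begin{itemize}
\item Suppose $s_l=0$ for some $1\le l\le r-1$. Then $2\le l\le r-2$, because $s_1=a_1\neq0$ and $s_{r-1}=-a_r\neq0$. A bump at $k=l$ is legal, since the bump's sign condition is weak. Use the above-diagonal version if $a_l<0$ and the below-diagonal version if $a_l>0$. In the example this gives $(2,-3,1)$. Unwinding with $l=0$, $k=2$ then yields $(-3,3)=\rcornerpath$.
\item Suppose no interior corner lies on the diagonal and both signs occur among $s_1,\dots,s_{r-1}$. Then your block argument works. If there are at least three sign-blocks, take the corners flanking a middle block. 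If there are exactly two, take $l=0$ and $k$ the first index of the second block.
\end{itemize}
Hence every path other than $\cornerpath$ and $\rcornerpath$ admits some move. Both moves lower $r$ by $2$, so termination is automatic. The real issue was the existence of a move, not progress. Your proposed potential, the number of runs below the diagonal, need not decrease anyway. For example, when the only such run is the final one you must use the peak version with $l=0$, and the count stays at one.

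One small correction in Phase~2: the new partial sums are not a subset of the old ones, since the bump creates $s_{k-1}+a_{k+1}$. But $a_{k+1}>0$ and $s_{k-1}\ge0$, so this value is still nonnegative and your conclusion stands.
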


\begin{remark}
    This $\succ$ is not a total ordering on $\Gamma_n$. One can of course define total orderings on the set of paths, but there is no one ordering with respect to which path probabilities are monotonic, in general. For fixed $n$, one can use software to find pairs of paths whose probabilities are larger or smaller than the other, depending on the weight distribution.
\end{remark}

\section{Proof of Theorem~\ref{thm:corners}}
\label{sec:proofcorners}
We proceed by relating the probability of observing a corner geodesic with that of seeing a large passage time along a fixed path, following \cite{ABGS:midpoint}, and then observing an exponential loss resulting from the competition near corners in our path. 

\begin{proof}[Proof of Theorem~\ref{thm:corners}]
It is shown in \cite{ABGS:midpoint} when $d = 2$ that for every $\delta > 0$ and all sufficiently large $n$, one has
\begin{equation}\label{eq:lower-gamma1}
\PP(\cornerpath\in \Gamma_n^*) \ge \exp\bigl(-d n (I(\mu)+\delta)\bigr).
\end{equation}
We give a proof for general $d \ge 2$ in Appendix~\ref{app:cornerpathprob}.

This handles the lower bound on the denominator. For an upper bound on the numerator, our strategy will be as follows. For a path to be a geodesic, it must at least have a passage time near $d n \mu$. Conditioned on such a large passage time, the weights on the path look like i.i.d.\ exponential tilts of the original distribution. Further, the weights on the corners must at least be as big as the weights on the flipped vertices, as otherwise we could find a new path with a larger passage time. The probability that the corner weight is smaller than its flip is strictly less than $1$, and imposing this condition on the linear number of corners incurs an exponential cost.

Take another small constant $\alpha > 0$, to be chosen later. A union bound gives
\begin{equation}
\label{eq:cornerunionbound}
\PP(\gamma \in \Gamma^*_n) \le \PP\bigl(\gamma \in \Gamma^*_n, L_n \ge dn(\mu - \alpha)\bigr) + \PP\bigl(L_n < dn(\mu - \alpha)\bigr). 
\end{equation}
For the second probability, we use the bounds of \cite{CZ:fpp-ldp}, established for first passage percolation but valid also in LPP (see Lemma~\ref{lemma:lower-tail} for a self-contained proof): 
\[
    \PP\bigl(L_n < dn(\mu - \alpha)\bigr) \le \exp(-dn (I(\mu)+1))
\]
 This is negligible in our calculation.

We turn to the first probability in \eqref{eq:cornerunionbound}. This is itself bounded by
\[
    \PP\bigl(\gamma \in \Gamma^*_n, L_n \ge dn(\mu - \alpha)\bigr) \le \PP\bigl(L(\gamma) \ge dn(\mu - \alpha),\, \omega_x \ge \omega_{\hat{x}} \text{ for all } x \in D(\gamma)\bigr),
\]
where recall that $\hat{x}$ is the flip of $x$.
Consider a tilted measure for the weights on $\gamma$ with Radon-Nikodym derivative $e^{\theta_\alpha x - \Lambda(\theta_\alpha)}$, where $\theta_\alpha > 0$ is chosen so that the new measure has mean $\mu - \alpha$ on $\gamma$. This is possible by the assumption of tiltability. Observe that the exponent evaluated at $x = \mu - \alpha$ is exactly $I(\mu - \alpha)$. Write $\EE_\alpha$ for the expectation under this measure. Then
\begin{align*}
   & \PP\bigl(\gamma \in \Gamma^*_n, L_n \ge dn(\mu - \alpha)\bigr) \\
   &\le \EE\sqrbrac[\bigg]{\bone_{L(\gamma) \ge dn(\mu - \alpha)} \prod_{x \in D(\gamma)} \bone_{\omega_x \ge \omega_{\hat{x}}}}\\
    &= \EE_\alpha \sqrbrac[\bigg]{e^{-\theta_\alpha L(\gamma) + (d n + 1)\Lambda(\theta_\alpha)}\bone_{L(\gamma) \ge dn(\mu - \alpha)} \prod_{x \in D(\gamma)} \bone_{\omega_x \ge \omega_{\hat{x}}}}\\
    &\le \EE_\alpha \sqrbrac[\bigg]{e^{-\theta_\alpha \, dn(\mu - \alpha) + (d n + 1)\Lambda(\theta_\alpha)}\bone_{L(\gamma) \ge dn(\mu - \alpha)} \prod_{x \in D(\gamma)} \bone_{\omega_x \ge \omega_{\hat{x}}}}.
    \end{align*}
Since $\theta_\alpha (\mu - \alpha) -\Lambda(\theta_\alpha) = I(\mu-\alpha)\geq 0$, this is bounded from above by
    \begin{align*}
& e^{-dn I(\mu - \alpha)+\Lambda(\theta_\alpha)} \EE_\alpha \sqrbrac[\bigg]{ \prod_{x \in D(\gamma)} \bone_{\omega_x \ge \omega_{\hat{x}}}}\notag\\
    &\quad = e^{\Lambda(\theta_\alpha)}e^{-dn I(\mu - \alpha)} \PP(\omega_\alpha \ge \omega')^{\abs{D(\gamma)}}.
\end{align*}
In the last line, $\omega_\alpha$ is a weight drawn under the tilted measure and $\omega'$ is an independent weight drawn from the original distribution. Write $p_\alpha = \PP(\omega_\alpha \ge \omega')$. Since these two variables have the same range and are independent and non-constant, we see that $p_\alpha < 1$. Moreover, decreasing $\alpha$ only makes $\omega_\alpha$ larger and increases $p_\alpha$. They are then uniformly bounded by $p:= p_0  < 1$. It is at this point that the proof would break down if we were to allow supercriticality, as in this case the conditioned weights on the corner path would need to be constant, and we would have $p_0 = 1$.

Using $\abs{D(\gamma)} \ge \epsilon n$, we get finally the bound
\begin{equation}\label{eq:upper-gamma1}
    \PP\bigl(\gamma \in \Gamma^*_n, L_n \ge dn(\mu - \alpha)\bigr) \le e^{\Lambda(\theta_\alpha)}\exp\bigl(-n(dI(\mu - \alpha) + \epsilon \log p^{-1})\bigr).
\end{equation}
Now choose $\alpha$ small enough to ensure that 
\[
d I(\mu - \alpha) + \epsilon \log p^{-1} \ge d I(\mu) + 2 c_\epsilon,
\]
for some $c_\epsilon \in (0,1/2)$ small.

 Choose $\delta < \min\{1/2,c_\epsilon/d\}$ and $C:=2e^{\Lambda(\theta_\alpha)}$. Combining \eqref{eq:lower-gamma1} with \eqref{eq:upper-gamma1}, we have 
\[
\frac{\PP\bigl(\gamma \in \Gamma^*_n)}{\PP(\cornerpath\in \Gamma_n^*)} \le e^{\Lambda(\theta_\alpha)} e^{- n(d I(\mu) + 2 c_\epsilon - d I(\mu) - \delta)} + e^{-dn/2} \le C e^{- n c_\epsilon}.
\]
\end{proof}

\begin{proof}[Proof of Corollary~\ref{cor:corners}]
This is essentially an exercise in enumeration. Fix the $\epsilon>0$ appearing in the claim  and set
\[
    \delta=\frac12\min\{\epsilon,d\epsilon\log d\}.
\]
Choose $\eta>0$ sufficiently small that
\[
    \eta\log\!\left(\frac{2ed^2}{\eta}\right)<\frac{\delta}{2}.
\]
By Theorem~\ref{thm:corners}, for $n$ sufficiently large, every
$\gamma \in \Gamma_n$ with $\abs{D(\gamma)} > \eta n$ satisfies
$\PP(\gamma \in \Gamma_n^*) < \PP(\cornerpath \in \Gamma_n^*)$. Hence
$$\Gamma_n^\ge \subseteq \set{\gamma \in \Gamma_n : \abs{D(\gamma)} \le
\eta n}.$$

A path in $\Gamma_n$ is a word of length $dn$ containing $n$ copies of each
coordinate direction. If a path consists of \(r\) maximal straight segments, then it has \(r-1\) corners.
   Thus $\abs{D(\gamma)} \le \eta n$ implies $r\le \eta n+1$.

Fix $r\le \eta n+1$. We obtain a crude upper bound as follows:
\begin{itemize}
\item Choose $r-1$ corner locations among the $dn$ possible
positions, yielding $\binom{dn}{r-1}$ choices.
\item Choose the direction of each run. There are $d$ choices for the first run and at most $d-1$
choices for each subsequent run, hence at most
$d(d-1)^{r-1}\le d^r$ possibilities.
\end{itemize}

Ignoring the constraint that each direction must appear exactly $n$ times
(making this an upper bound), we get
\[
\abs{\{\gamma\in\Gamma_n:\text{$\gamma$ has $\le \eta n$ corners}\}}
\le
\sum_{r\le \eta n+1} d^r \binom{dn}{r-1}.
\]

For $r\le \eta n+1$ with $n$ large enough, we have
\[
d^r \binom{dn}{r-1}
\le
\left(\frac{2e d^2 n}{r}\right)^r
\le
\left(\frac{2e d^2}{\eta}\right)^{\eta n+1}.
\]

Taking logarithms and dividing by $n$,
\[
\frac{1}{n} \log\abs{\{\gamma\in\Gamma_n:\text{$\gamma$ has $\le \eta n$ corners}\}}
\le
\eta \log\!\left(\frac{e d^2}{\eta}\right)
+ o(1).
\]
Consequently, by the choice of $\eta$, for all sufficiently large $n$,
\[
    \abs{\Gamma_n^\ge}\le e^{\delta n}\le e^{\epsilon n}.
\]
On the other hand,  a standard combinatorial argument shows 
\[
\frac{1}{n}\log \abs{\Gamma_n} = d\log d + o\!\left(1\right).
\]
Thus, we conclude 
\[
\frac{\abs{\Gamma_n^\ge}}{\abs{\Gamma_n}} \le
\exp\{-dn\log d+2\delta n\}
\le d^{-dn(1-\epsilon)}.
\]
\end{proof}

\begin{remark}
    The assumption of tiltability ensures that there is an exponential tilt of the weight distribution having mean $\mu$. It was necessary for the proof and serves two purposes. Most significantly, it rules out the critical and supercritical cases. If $M = \esssup \omega_x$ and $\PP (\omega_x = M) \ge p_c$, the critical probability for directed site percolation, then we see that $\mu = M$. Unless the weight distribution is constant, no exponential tilt can attain $M$ as the mean. This is an if and only if statement: a distribution bounded above will fail to have tiltability if and only if the atom at the maximum is at or above the critical probability.

    We should not expect an exponential bound of the type in Theorem~\ref{thm:corners} in the critical and supercritical cases. At or above criticality, all but a vanishing proportion of the weights along a high-weight path are constant and equal to the maximum of the distribution. There is little scope to improve the passage time by flipping corners, and as such the presence of corners does not incur an exponential cost as in the subcritical case.
\end{remark}
\begin{remark}
    The second purpose of the tiltability assumption is to remove certain pathological examples of distributions which have finite exponential moment but nevertheless suffer somewhat heavy tails. An unbounded distribution having $R = \sup \Lambda'(t) < \infty$ is sometimes called \emph{non-steep}. A representative example is the variable on $[1, \infty)$ with density proportional to $x^{-3}e^{-x}$, which one can verify is non-steep.
    
    When $\mu > R$, the passage times are dominated by a small number of atypically large weights. This change in qualitative behaviour is an example of the big-jump phenomenon that appears when an i.i.d.\ sum is conditioned to have a mean larger than $R$. See \cite{DDS:bigjumps} for some results in this direction. Our proof cannot be carried out as written -- one would have to distinguish between the typical and the atypically large values on a path and adjust accordingly. However, the heuristic of the argument remains valid, and we suspect the proof can be modified to encompass all subcritical distributions with a finite exponential moment.

    We note also that there exist distributions for which $\mu > R$. Take for example the density  proportional to $x^{-3}e^{-x}$ on $[1, \infty)$, for which $R < \infty$. The time constant is trivially bounded below by the mean of the maximum of $d$ independent copies of $\omega_x$. This quantity diverges as $d$ increases, and in doing so we may make it larger than $R$.
\end{remark}

We end with two short lemmas providing criteria for tiltability.

\begin{lemma}
    Suppose $\esssup \omega_x = \infty$ and there is $\delta > 0$ such that $\EE\sqrbrac{e^{\delta \omega_x}} < \infty$. Let $R = \sup_{t\geq 0} \Lambda'(t)$. Sufficient conditions for a weight distribution to be tiltable are $R = \infty$ or $I(R) > \log d$. In particular, if $\EE\sqrbrac{e^{t \omega_x}} < \infty$ for all $t > 0$, then the distribution is tiltable.
    \end{lemma}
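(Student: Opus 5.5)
The goal is to show $\mu < R$. If $R = \infty$ there is nothing to prove, because $\mu$ is finite: the exponential moment gives $\EE\omega_x^+ < \infty$, and a standard bound (for instance, the counting argument below) gives $\mu < \infty$. So the real content is the case $R < \infty$ with $I(R) > \log d$. There I would prove $\mu \le R' $ for some $R' < R$ through a first-moment (union bound) estimate over paths. The tiltability assumption asks for $\mu < \sup_{t\ge0}\Lambda'(t) = R$, and since $\Lambda'$ is increasing and continuous on the interior of its domain, this is exactly what is needed.

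For the key estimate, fix $\lambda$ and bound
\[
\PP\bigl(L_n \ge dn\lambda\bigr) \le \abs{\Gamma_n}\,\sup_{\gamma\in\Gamma_n}\PP\bigl(L(\gamma)\ge dn\lambda\bigr) \le e^{dn\log d + o(n)}\,e^{-(dn+1) I(\lambda)+o(n)}.
\]
This uses $\frac1n\log\abs{\Gamma_n} = d\log d + o(1)$, as in the proof of Corollary~\ref{cor:corners}, together with the Chernoff bound $\PP(S_N \ge N\lambda)\le e^{-N I(\lambda)}$. The Chernoff bound holds for every $\lambda$, including $\lambda \ge R$, where $I(\lambda) = \sup_t[t\lambda-\Lambda(t)]$ is still well defined. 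If $I(\lambda) > \log d$, the right-hand side is summable in $n$. Borel--Cantelli then gives $\limsup L_n/(dn)\le\lambda$ almost surely, so $\mu\le\lambda$.

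The main point is to find such a $\lambda$ strictly below $R$. The rate function $I$ is convex and lower semicontinuous, and it is finite on $(\EE\omega_x, R]$. For $R<\infty$ in the non-steep case, $I(R) = tR - \Lambda(t)$ evaluated at the endpoint of the domain, which is finite. On the interval where it is finite, a convex function is continuous in the interior, and at the endpoint $R$ it is left-continuous by lower semicontinuity plus convexity (a convex lsc function on an interval is continuous on its closure within the effective domain). Hence $I(R) > \log d$ yields $\lambda < R$ with $I(\lambda) > \log d$, and therefore $\mu\le\lambda<R$. I would also check the degenerate sub-case where $\lambda$ would fall below $\EE\omega_x$; this cannot happen, because $\mu \ge \EE\omega_x$ and then $\mu<R$ is immediate anyway unless $\EE\omega_x = R$, which is impossible for a nonconstant unbounded variable.

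For the ``in particular'' statement: if all exponential moments are finite, $\Lambda$ is finite and smooth on $\RR$. Since $\esssup\omega_x=\infty$, we have $\Lambda'(t)\to\infty$. This holds because $\Lambda(t)/t \to \esssup\omega_x$, and a convex function has $\Lambda'(t)\ge \Lambda(t)/t - \Lambda(0)/t$. So $R=\infty$ and the first criterion applies. The step I expect to need the most care is the continuity of $I$ at $R$ from the left in the non-steep case, which is where the strict inequality $I(R)>\log d$ is converted into $\mu<R$.
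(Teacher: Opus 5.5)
Your proof is correct and follows the paper's argument: a union bound over the at most $d^{dn}$ paths with a Chernoff/Cram\'er bound for a fixed path, and continuity of $I$ at $R$ to find $\lambda<R$ with $I(\lambda)>\log d$, hence $\mu\le\lambda<R$ (Borel--Cantelli versus the paper's ``the probability vanishes'' is immaterial). The only real difference is the final claim: your convexity bound $\Lambda'(t)\ge\Lambda(t)/t$ with $\Lambda(t)/t\to\esssup\omega_x$ is a shorter substitute for the paper's explicit estimate of $\EE[(R_0-\omega_x)_+e^{t\omega_x}]/\EE[e^{t\omega_x}]$; separately, your Chernoff bound holds only for $\lambda\ge\EE\omega_x$, which is harmless since $R>\EE\omega_x$ and you only need $\lambda$ slightly below $R$, so your muddled remark about the sub-case $\lambda<\EE\omega_x$ can be dropped.
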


    \begin{proof}
        Recall that our goal is to show $\mu < R$. Having finite exponential moment is enough to ensure $\mu < \infty$, so if $R = \infty$ there is nothing to prove. So suppose $R < \infty$ and $I(R) > \log d$. By continuity, we can find $\eta > 0$ such that $I(R - \eta) > \log d$. For each $\gamma \in \Gamma_n$, we have $\PP(L(\gamma) > d n (R - \eta)) = e^{-(1+o(1)) d n I(R - \eta)}$. Also, we saw in the proof of Corollary~\ref{cor:corners} that $\log \abs{\Gamma_n} =(1+o(1)) d n \log d$. A union bound then gives that
        \[
        \PP(L_n > d n (R - \eta)) \le e^{d n \log d - d n (1 + o(1)) I(R - \eta)} = C_\epsilon e^{dn (\log d - I(R - \eta) + o(1))}.
        \]
The expoenent is negative and this probability vanishes, showing that $\mu \le R - \eta < R$.

To prove the last statement, suppose now that $\EE[e^{t\omega_x}]<\infty$ for every $t$. We prove that $\Lambda'(t)\to \infty$ as $t\to\infty$. Recall that
\[
\Lambda'(t)=
\frac{\EE[\omega_x e^{t\omega_x}]}{\EE[e^{t\omega_x}]}.
\]
Fix \(R_0>0\). Since $\omega_x\ge R_0-(R_0-\omega_x)_+$, 
we have
\[
\Lambda'(t) \ge
R_0-
\frac{\EE[(R_0-\omega_x)_+e^{t\omega_x}]}{\EE[e^{t\omega_x}]}.
\]
Some calculus shows that
\[
\EE[(R_0-\omega_x)_+e^{t\omega_x}] \le \frac{e^{tR_0-1}}{t}.
\]
On the other hand,
\[
\EE[e^{t\omega_x}] \ge e^{t(R_0+1)} \PP(\omega_x>R_0+1).
\]
Since $\omega_x$ is unbounded above,
\(
\PP(\omega_x>R_0+1)>0.
\) 
Consequently,
\[
\frac{\EE[(R_0-\omega_x)_+e^{t\omega_x}]}{\EE[e^{t\omega_x}]}
\le \frac{e^{-t-1}} {t \PP(\omega_x>R_0+1)}
\longrightarrow 0.
\]
Thus,
\[
\liminf_{t\to\infty}\Lambda'(t)\ge R_0.
\]
Since $R_0>0$ is arbitrary, it follows that
$\Lambda'(t) \to \infty$ and hence that $R = \infty$.
\end{proof}

\begin{lemma}
    Suppose $M = \esssup \omega_x < \infty$. Then the distribution is tiltable if and only if $\PP(\omega_x = M) < p_c$, the critical probability for directed site percolation.
\end{lemma}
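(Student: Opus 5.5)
Let $q = \PP(\omega_x = M)$. Tiltability asks whether $\mu < \sup_{t \ge 0} \Lambda'(t)$. For a bounded distribution, $\Lambda'(t)$ is the mean of the $t$-tilted law. This mean increases to $M$ as $t \to \infty$ and never reaches $M$ unless the law is degenerate, so $\sup_t \Lambda'(t) = M$. Since trivially $\mu \le M$, tiltability is equivalent to $\mu < M$. The statement therefore reduces to a purely percolation-theoretic fact: $\mu = M$ if and only if $q \ge p_c$. This should be the classical dichotomy for LPP with bounded weights, and I would prove the two directions separately. (The degenerate constant case is excluded, or treated trivially.)

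\emph{If $q \ge p_c$, then $\mu = M$.} Declare a site open when $\omega_x = M$. The open sites form oriented site percolation with parameter $q \ge p_c$. A directed path of length $dn$ with weight $dn(M - o(1))$ needs all but $o(n)$ of its sites open. In the supercritical case, the oriented percolation cluster has a positive density of infinite open directed paths. Within $n^{1-\epsilon}$ of the origin one can find such a path going to infinity in a direction close to the diagonal, using the known asymptotic cone of the supercritical oriented cluster, which contains the diagonal. Connecting to it and off it at $o(n)$ cost gives $L_n \ge dnM - o(n)$. At $q = p_c$ I would instead use a standard result: the maximal number of open sites on a directed path of length $N$ is $N - o(N)$ at criticality. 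This follows from the fact that the time constant of the associated $0$/$1$ LPP equals $1$ exactly when $q \ge p_c$, a known theorem (Durrett–Liggett type, via continuity of the time constant in $q$). I expect the critical case to be the delicate point, and I would cite it rather than reprove it.

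\emph{If $q < p_c$, then $\mu < M$.} Here I would use subcritical oriented percolation: open directed paths from a point have length with exponentially decaying tail. Fix $\eta > 0$ small and let $b = \PP(\omega_x \ge M - \eta)$. Choose $\eta$ so that $b < p_c$; this is possible because $b \downarrow q$ as $\eta \downarrow 0$. Call a site good if $\omega_x \ge M - \eta$. Every directed path then meets a bad site at least once in every block of $K$ consecutive steps, outside an exponentially small event. More precisely, I want a linear lower bound on the number of bad sites on every path. A union bound over paths alone loses a factor $d^{dn}$, so it is better to use the known fact that for subcritical oriented percolation the maximal number of open sites on a length-$N$ path is at most $(1-c)N$ almost surely asymptotically. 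This is exactly the LPP time constant of $\mathrm{Ber}(b)$ weights being $<1$ for $b < p_c$, the converse direction of the same dichotomy.

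Given this, every path has weight at most $dn\bigl[M - c\eta\bigr]$ with high probability, so $\mu \le M - c\eta < M$. The main obstacle is cleanly citing or proving the dichotomy for $\{0,1\}$-valued LPP: the time constant equals $1$ if and only if $q \ge p_c$. Once that is in hand, both directions follow by comparing $\omega$ with the indicator weights $M\bone_{\omega_x = M}$ from below, and with $(M-\eta) + \eta\bone_{\omega_x \ge M - \eta}$ from above.
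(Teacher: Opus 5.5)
Your proposal is correct and takes essentially the paper's route. Tilted means of a nondegenerate law bounded above increase to $M$ without reaching it, so tiltability reduces to $\mu < M$, and the equivalence $\mu < M \iff \PP(\omega_x = M) < p_c$ is then imported from the oriented-percolation literature; the paper says it can be adapted from Marchand's strict-inequality results, whereas you make the sandwich comparisons with Bernoulli LPP explicit. As in the paper, the critical case is cited rather than proved, but cite the critical-case theorem itself rather than a continuity argument: the right-continuity at $p_c$ you would need is not a soft fact, since the Bernoulli time constant, being (up to boundary corrections) a supremum of polynomials in $q$, is only automatically lower semicontinuous, which is the wrong direction.
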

\begin{proof}
        The assumption of subcriticality is equivalent to $\mu < M$ and implies in particular that the distribution is nonconstant. (The proof may be adapted from \cite{M:strictfpp}.) We also have $\mu > \EE \sqrbrac{\omega_x}$. Thus $\mu$ lies strictly between the mean and the maximum value of $\omega_x$. It is a standard fact about exponential tilts that a bounded variable may be tilted to have any mean in this open interval.

        On the other hand, if $\PP(\omega_x = M) \ge p_c$, then $\mu = M$, while the mean under every finite exponential tilt is strictly smaller than $M$ unless the distribution is constant. In the constant case, the strict inequality for tiltability also fails.
    \end{proof}

\section{Proof of Theorem~\ref{thm:probability}}
\label{sec:proofprobability}
The upper bound can be shown quite directly.
\begin{proof}[Proof of the upper bound in Theorem~\ref{thm:probability}]
Take an arbitrary $\gamma \in \Gamma_n$. We can write
\[
    \PP(\gamma = \gamma^*_n) = \PP(L(\gamma) = L_n).
\]
The last event is contained in $\bigcup_{a \in \ZZ} \set{L(\gamma) \ge a, L_n \le a + 1}$, and so we get a union bound 
\[
    \PP(\gamma = \gamma^*_n) \le \sum_{a = 0}^\infty \PP(L(\gamma) \ge a, L_n \le a + 1).
\]
Now observe that $\set{L(\gamma) \ge a}$ is an increasing event in the weights, while $\set{L_n \le a + 1}$ is decreasing. The Harris inequality will then imply that they are negatively correlated. Additionally, the bound is uniform over $\gamma$. Choosing some arbitrary $\gamma_0$ as a placeholder, we end up with
\begin{equation}
\label{eq:maxunionbound}
        \max_{\gamma \in \Gamma_n}\PP(\gamma = \gamma^*_n) \le \sum_{a = 0}^\infty \PP(L(\gamma_0) \ge a)\PP(L_n\le a + 1).
\end{equation}
We can estimate these probabilities with \eqref{eq:BRthm} and \eqref{eq:lefttail}. Introduce a new index $t=t(a)$ with $a = 2 \mu n - t n^{1/3}$. When $t \ll n^{2/3}$, set
\[
    N=2n+1,\qquad \lambda_{n,t}=\frac{2\mu n-t n^{1/3}}{N}.
\]
The rate function for $L(\gamma)$ and $\mu=2$ gives
\begin{align}
    \PP(L(\gamma_0) \ge 2 \mu n - t n^{1/3})
    &= \frac{e^{-N I(\lambda_{n,t})}}{(\lambda_{n,t}-1)\sqrt{2\pi N}}(1+O(N^{-1}))\label{eq:lowertailExpldp}\\
    &= \exp\brac[\big]{-2 n I(\mu) + \frac{1}{2}t n^{1/3}
    +O(t^2n^{-1/3})+O(\log n)}.\notag
\end{align}
On the other hand, we have the moderate deviations of the passage time when $T \le t \le \rho n^{2 / 3}$, where $T,\rho>0$ are the large and small constants, respectively, as specified above \eqref{eq:lefttail}. In this range we have
\begin{equation}\label{eq:lefttail_revisit}
    \PP(L_n \le 2 \mu n - t n^{1/3} + 1) = \exp\brac[\big]{-\frac{1}{192}t^{3} + O(t^4 n^{-2/3}) + O(\log t)}.
\end{equation}

 For $t\leq T$, we control the first factor $\PP(L(\gamma_0) \ge a)$. We use the exact tail identity
    \[
        \PP(S_N\ge a)=e^{-a}\sum_{j=0}^{N-1}\frac{a^j}{j!},
    \]
where \(S_N\) is the sum of \(N\) independent rate one exponential random variables and has the same distribution as \(L(\gamma_0)\). For $a \ge 4n - T n^{1/3}\gg 1$, the sum is bounded by a constant multiple of its final term. Stirling's formula therefore gives
\[
        \log\PP(S_{2n+1}\ge a)=-(a-2n)+2n\log(a/(2n))+O(\log n).
    \]
    Recall that $\mu = 2$ and $I(x)= x-1 -\log(x)$ is the rate function for rate one exponential random variables. In particular, $I(\mu)= I(2)= 1-\log 2$. The total contribution from such $a$ is hence
    \begin{equation}
        \label{eq:tailsum}
        \begin{aligned}
            \sum_{a = \floor{4n - T n^{1 / 3}}}^\infty \PP(&L(\gamma_0) \ge a)\PP(L_n \le a + 1)\\
            &\le \sum_{a = \floor{4n - T n^{1 / 3}}}^\infty \exp\brac[\big]{-(a - 2n) + 2 n \log(a / 2 n) + O(\log n)}\\
            &\le  \exp\brac[\big]{ - 2 n (1 - \log 2) + T n^{1 / 3} + O(\log n)}\\
            &= \exp\brac[\big]{ - 2 n I(\mu) + T n^{1 / 3} + O(\log n)}.
        \end{aligned}
    \end{equation}  

The terms coming from $t \ge \rho n^{1 / 2}$ can be controlled through the second factor. The corresponding range for $a$ is $0 \le a \le 2 n \mu - \rho n^{5/6}$, and here we have
    \begin{align*}
        \PP(L(\gamma_0) \ge a)\PP(L_n \le a + 1) &\le \PP(L_n \le a + 1)\\
        &\le \PP(L_n \le 2 n \mu - \rho n^{5/6} + 1)\\
        &\le \exp(-c_\rho n^{3/2}),
    \end{align*}
    where the last line follows from \eqref{eq:lefttail}. Thus
    \begin{equation}
        \label{eq:headsum}
        \begin{aligned}    
            \sum_{a = 0}^{\ceil{2 n \mu - \rho n^{5/6}}} \PP(L(\gamma_0) \ge a)&\PP(L_n \le a + 1)\\
            &\le (2\mu n) \exp(-c_\rho n^{3/2}).
        \end{aligned}
    \end{equation}
    
    It remains to estimate the contribution to \eqref{eq:maxunionbound} from the terms in the range $T \le t \le \rho n^{1/2}$. Here we must balance both factors of the terms. From \eqref{eq:lowertailExpldp} and \eqref{eq:lefttail_revisit}, the relevant exponent is
    \begin{multline*}
        \PP(L(\gamma_0) \ge a)\PP(L_n \le a + 1) = \exp\bigl(-2 n I(\mu) + \frac{1}{2}t n^{1/3} - \frac{1}{192}t^{3}
        \\+ O(t^2 n^{-1/3}) + O(t^4 n^{-2/3}) + O(\log t) + O(\log n)\bigr)
    \end{multline*}
 The maximum value of the leading exponent occurs at $t_* = 4\sqrt{2} n^{1/6}$, resulting in a value of $-2 n I(\mu) + \frac{4}{3}\sqrt{2 n}$. To obtain the upper bound, split the range into $t=O(n^{1/6})$ and its complement. In the first range the error terms in the preceding display are $O(\log n)$, uniformly on every fixed multiple of $n^{1/6}$. Outside a sufficiently large fixed multiple of $n^{1/6}$, the negative cubic term dominates both the linear term and the displayed errors; below a sufficiently small fixed multiple, the linear term is strictly below the maximum on the $\sqrt n$ scale. Since the mesh in $t$ is $n^{-1/3}$ and there are only $O(n)$ summands, the largest summand controls the sum up to a factor $\exp(O(\log n))$. Consequently,
    \begin{equation}
        \label{eq:middlesum}
        \sum_{a = \ceil{2 n \mu - \rho n^{5/6}}}^{\floor{4n - T n^{1 / 3}}}\PP(L(\gamma_0) \ge a)\PP(L_n \le a + 1) \le \exp\bigl(-2 n I(\mu)+\frac{4}{3}\sqrt{2n} + O(\log n)\bigr).
    \end{equation}

    Between \eqref{eq:tailsum}, \eqref{eq:headsum} and \eqref{eq:middlesum}, we have bounded the right hand side of \eqref{eq:maxunionbound}. Putting the pieces together leads to
    \[
    \max_{\gamma \in \Gamma_n}\PP(\gamma = \gamma^*_n) \le \exp\bigl(-2 n I(\mu) + \frac{4}{3} \sqrt{2n} + O(\log n)\bigr),
    \]
    and after taking logarithms and adding $2 n I(\mu)$, we find the claimed upper bound.
\end{proof}

The lower bound involves a number of moving pieces. In describing an event under which $\cornerpath$ is a geodesic, we must ensure there are no ``shortcuts'' through the bulk.

The bulk passage times we wish to control have their starting points near either $(0, k)$ or $(k, n)$, $0 \le k \le n$. We can imagine that the passage times are heavily dependent when their endpoints are separated at distance smaller than $n^{2/3}$, and are otherwise mostly independent. Thus, we  need only control the equivalent of $O(n^{1/3})$ independent variables.

The first bound we need for shortcuts is stated in the next lemma. Recall that
\[
L^{\bltriangle}(a, b; n) = \max_{0 \le k \le n - (a + b)}L(a, b; a + k, n - a - k).
\]
\begin{lemma}
\label{lem:cornerdomination}
    Let $C_k$ be the weight along the straight path $(0, n - k) \to (0, n)$ and let $L^{\bltriangle}_k = L^{\bltriangle}(1, n - k - 1; n)$ be a point-to-line passage time from a shifted starting point. For each $\epsilon > 0$ there is a constant $c > 0$ such that for $n$ large enough,
    \begin{equation}
        \PP(C_k \ge k \mu \ge L^{\bltriangle}_k \text{ for all } 1 \le k \le n - 1) \ge e^{-n I(\mu) - c n^{1/3 + \epsilon}}.
    \end{equation}
\end{lemma}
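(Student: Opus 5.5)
The first observation is that the two requirements involve disjoint sets of weights. The variables $C_k$ depend only on the weights in the column $\{x_1 = 0\}$. Every path counted in $L^{\bltriangle}_k$ starts at $(1, n-k-1)$ and moves up-right, so it stays in $\{x_1 \ge 1\}$. Hence the events $A = \{C_k \ge k\mu \ \forall k\}$ and $B = \{L^{\bltriangle}_k \le k\mu \ \forall k\}$ are independent, and it suffices to show
\[
\PP(A) \ge e^{-nI(\mu) - O(\log n)} \quad\text{and}\quad \PP(B) \ge e^{-c n^{1/3+\epsilon}}.
\]

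\emph{The event $A$.} Reading the column downward from $(0,n)$, the sequence $C_k$ is a random walk $S_{k+1}$ of i.i.d.\ $\Exp(1)$ increments, where $S_{k+1}$ denotes the sum of $k+1$ weights. We need it to stay above the line $k\mu$ for $1\le k\le n-1$. I would tilt by $\theta$ with $\Lambda'(\theta)=\mu$, so the tilted walk has drift exactly $\mu$, and restrict further to the event $S_n \in [n\mu, n\mu+1]$. On that event the Radon--Nikodym factor is $e^{-nI(\mu)+O(1)}$. Under the tilt, a mean-zero walk (after centring) has probability of order $n^{-3/2}$ to stay nonnegative and end in a unit window; this is a ballot-type or local-limit estimate. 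The starting offset $C_0=\omega_{(0,n)}\ge 0$ only helps. This yields $\PP(A)\ge e^{-nI(\mu)-O(\log n)}$.

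\emph{The event $B$.} All the events $\{L^{\bltriangle}_k \le k\mu\}$ are decreasing in the weights, so by the Harris inequality the probability of their intersection is at least the product of the probabilities of any family of decreasing events that together imply $B$. Taking the product over all $k$ individually would cost $e^{-cn}$, so I would group the $k$ into windows at the KPZ correlation scale.
\begin{itemize}
\item Define windows $W_j=[k_j,k_{j+1})$ with $k_{j+1}-k_j \asymp k_j^{2/3}$. The number of windows is about $\int^n dk/k^{2/3}\asymp n^{1/3}$.
\item For each window, use the decreasing event $E_j=\{\max_{k\in W_j}(L^{\bltriangle}_k - k\mu)\le 0\}$.
\item Show that $\PP(E_j)\ge e^{-Cn^{\epsilon}}$ uniformly in $j$. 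Harris then gives $\PP(B)\ge e^{-Cn^{1/3+\epsilon}}$.
\item For small $k$, below a large constant $K_0$, bound the finitely many events crudely by a positive constant.
\end{itemize}

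The heuristic behind the window bound is this. The maximum over a starting segment of length $k^{2/3}$ of a point-to-line passage time of length $\approx k$ is a segment-to-line time. By KPZ scaling it has fluctuations of order $k^{1/3}$ around $k\mu + O(k^{1/3})$, and its lower tail below level $k\mu - sk^{1/3}$ should cost at most $e^{-Cs^3}$. We only need $s = O(n^{\epsilon/3})$, which is where the $n^{\epsilon}$ slack is spent, together with the drift correction $\mu\,(k - k_j)\le \mu k^{2/3}$ across the window.

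\emph{Main obstacle.} The step I expect to be hardest is the uniform lower bound on $\PP(E_j)$. The naive monotonicity $L^{\bltriangle}_k \le L^{\bltriangle}_{k'} - (\text{column-}1\text{ weights})$ for $k<k'$ reduces $E_j$ to a single event $\{L^{\bltriangle}_{k_{j+1}}\le k_{j+1}\mu - \text{deficit}\}$. However, it loses a deficit of order $(\mu-1)k^{2/3}$, i.e.\ $t\sim k^{1/3}$ in \eqref{eq:p2llefttail}, which costs $e^{-ck}$ and is far too much.

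What I would try first is to enlarge the target line rather than move the starting point. Every path from a point of $\{1\}\times[n-k_{j+1}-1,\,n-k_j-1]$ to the antidiagonal $\{x_1+x_2 = n\}$ can be extended by a fixed short piece, or compared via the stationary/Busemann increments of exponential LPP, to a point-to-line time from a single point lying a distance $\asymp k_j^{2/3}$ to the left. That auxiliary point is not available inside $\{x_1\ge1\}$, so instead I would use a diagonal shift of the line to level $n+k_j^{2/3}$. This should bound $\max_{k\in W_j}(L^{\bltriangle}_k-k\mu)$ by a single point-to-line time minus $k_j\mu$ plus $O(k_j^{1/3})$, with only a polynomially small loss in probability. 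Then \eqref{eq:p2llefttail} with $t=O(n^{\epsilon/3})$ applies and gives $\PP(E_j)\ge e^{-Cn^{\epsilon}}$.

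If this comparison proves too lossy, the fallback is a known modulus-of-continuity estimate for exponential LPP in the starting point on the $k^{2/3}$ scale. Combining it with the lower tail of one point-to-line time is delicate, since the modulus event is not monotone. In that case I would bound the modulus by a decreasing event, such as a sup of passage times from the window endpoints, so that the Harris inequality still applies.
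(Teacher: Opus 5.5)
\textbf{The gap: the per-window lower bound.}
Your reduction to two independent events is fine. Your treatment of the column event is also valid: tilting to mean $\mu$ and using an $n^{-3/2}$ ballot/local estimate is a legitimate substitute for the paper's cyclic-shift argument plus Bahadur--Rao. The Harris-over-windows structure for the bulk event is the same as in the paper. The gap is the per-window lower bound, which is the whole content of the bulk estimate. You do not establish it, and the mechanisms you propose fail as stated.

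\begin{itemize}[label={}, leftmargin=1em]
\item \emph{Domination by one point.} A single point $z$ lying weakly south-west of every starting point $(1,n-k-1)$, $k\in W_j$, must have $z_1\le 1$. If $z_1=1$, the only connections run straight up column $1$. These have mean $1$ per step against the centring $\mu=2$, which is exactly the $k^{2/3}$ deficit you already rejected. If $z_1\le 0$, the dominating passage time from $z$ depends on the weights in the column $\{x_1=0\}$ that make up the $C_k$. Its lower-tail event is then decreasing in the same weights on which the increasing event $\{C_k\ge k\mu\ \forall k\}$ depends. You lose independence, and Harris works against you.
\item \emph{Distance of the auxiliary point.} Even setting that aside, a point only $\asymp k_j^{2/3}$ behind incurs the shape-function loss $2(v_1+v_2)-(\sqrt{v_1}+\sqrt{v_2})^2=(\sqrt{v_1}-\sqrt{v_2})^2\asymp k^{2/3}$ for the connecting vectors $v$. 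A comparison of this kind only works from distance $\gtrsim k$, as in Lemma~\ref{lem:intervallefttail}.
\item \emph{Shifting the target line.} Moving the target line outward does not produce a single variable dominating a maximum over starting points at all.
\item \emph{The heuristic paragraph.} It has the same flaw. A drift of $\mu k^{2/3}$ is $s\asymp k^{1/3}$ in units of $k^{1/3}$, not $O(n^{\epsilon/3})$.
\end{itemize}

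\textbf{The missing idea.}
You do not need any moderate-deviation input per window. The paper uses windows of length $\lfloor a_i^{2/3-\epsilon}\rfloor$, strictly below the KPZ scale. On such a window the point-to-line geodesics coalesce quickly. The centred increments $(L^{\bltriangle}_k-k\mu)-(L^{\bltriangle}_{k'}-k'\mu)$ are then $o(k^{1/3})$. So $M^{\bltriangle}_{a,b}$ has the same $k^{1/3}$-scale limit as a single point-to-line time, and $\PP(M^{\bltriangle}_{a,b}\le 0)\to\PP(\mathrm{TW}_1\le 0)>0$. Each Harris factor is therefore at least a fixed $p>0$. The $\epsilon$ is spent on the number of windows, $O(n^{1/3+\epsilon})$, giving $p^{cn^{1/3+\epsilon}}$.

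At your exact KPZ scale a constant lower bound per window should also hold. It would come from convergence of the centred window process to an Airy$_1$-type limit, not from domination by one passage time, and you do not supply that either.
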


\begin{proof}
    The corner and bulk passage times are independent of one another, so we consider two independent events:
    \[
    \set{C_k \ge k \mu \text{ for all } 1 \le k \le n - 1},\qquad \set{L_k^{\bltriangle} \le k \mu \text{ for all } 1 \le k \le n - 1}.
    \]
    The first is a statement about a random walk with exponential increments, equal to the probability that a walk $\tilde{C}_k = C_k - k \mu$, with steps $\omega_x - \mu$, stays non-negative. An elementary lower bound on this probability is
    \[
    \PP(\tilde{C}_k \ge 0 \text{ for all } 1 \le k \le n - 1) \ge \frac{1}{n - 1}\PP(\tilde{C}_{n - 1} \ge 0).
    \]
    To see this, condition on $\tilde{C}_{n - 1} \ge 0$ and perform a cyclic permutation on the steps so that the minimal value comes first. The permuted path stays above zero. Thus under this conditioning there is always at least one permutation bringing our path to one of the form we seek. There are $n - 1$ possible cyclic permutations, and this gives the bound. Now we recall $\eqref{eq:BRthm}$, and get
    \begin{align*}
        \PP(\tilde{C}_{k} \ge 0 \text{ for all } 1 \le k \le n - 1) &\ge \frac{1}{n - 1}e^{-n I(\mu) + O(\log n)}\\ &= e^{-n I(\mu) + O(\log n)}.
    \end{align*}

    Next we look for a lower bound on $\PP(L_k^{\bltriangle} \le k \mu \text{ for all } 1 \le k \le n - 1)$. For $0 \le a < b \le n - 1$, consider 
    \[
        M^{\bltriangle}_{a, b} = \max_{a \le k \le b} \sqrbrac{L^{\bltriangle}_k - k \mu}.
    \]
    The event $\set{M_{a, b}^{\bltriangle} \le 0}$ is decreasing in the weights and thus, by the Harris inequality, for any sequence $1 = a_1 \le a_2 \le \cdots \le a_r = n-1$, we have
    \begin{align}
        \PP(L_k^{\bltriangle} \le k \mu \text{ for all } 1 \le k \le n - 1) &= \PP(M_{a_i, a_{i + 1}}^{\bltriangle} \le 0 \text{ for all } 1 \le i \le r - 1)\notag\\
        &\ge \prod_{i = 1}^{r - 1}\PP(M_{a_i, a_{i + 1}}^{\bltriangle} \le 0).\label{eq:P2LHarris}
    \end{align}
    If $b - a \ll b^{2/3}$, the geodesics for the passage times defining $M^{\bltriangle}_{a, b}$ coalesce at a microscopic distance with high probability and 
    \[
        \PP(M_{a, b}^{\bltriangle} \le 0) \xrightarrow{b \to \infty} \PP(\mathrm{TW}_1 \le 0) > 0.
    \]
    Here $\mathrm{TW}_1$ is the Tracy-Widom GOE distribution. That $(2n)^{-1/3}(L_n^{\bltriangle} - n\mu)$ converges to $\mathrm{TW}_1$ was indicated in \cite{BR:randomperms} and may be found explicitly in \cite[Theorem 3.18]{B:polymers}. Extending this to the maximum over a small interval is a simple use of coalescence estimates found, for example, in  \cite{Z:coalescence}.

    Using this convergence, pick $p > 0$ small and find $K$ such that $k \ge K$ ensures
    \[
        \PP(M_{\floor{k - k^{2/3 - \epsilon}}, k}^{\bltriangle} \le 0) \ge p.
    \]
    Also let $p_0 = \PP(M^{\bltriangle}_{0, K} \le 0) > 0$. We consider dividing $\bbrac{K, n-1}$ into $r = O(n^{1/3 + \epsilon})$ segments with small enough lengths. Specifically, we set $a_0 = K$, and subsequently $a_{i + 1} = a_i + \floor{a_{i}^{2 / 3 - \epsilon}}$, proceeding until the sequence exceeds $n-1$ and we cap it with $a_r = n - 1$. To see why $r = O(n^{1/3 + \epsilon})$, observe that we may lower bound the finite difference equation defining $a_i$ by the differential equation $dy = \frac{1}{2} y^{2/3 - \epsilon}\, dx$, whose solution grows like $C x^{3/(1 + 3\epsilon)}$ and whose inverse is like $c y^{1/3 + \epsilon}$.
    
    From \eqref{eq:P2LHarris} and with this choice of $a_i$,
    \begin{align*}
        \PP(L_k^{\bltriangle} \le k \mu \text{ for all } 1 \le k \le n - 1) \ge p_0 p^{c n^{1/3 + \epsilon}},
    \end{align*}
    which is of the form $\exp( - c n^{1/3 + \epsilon})$. Multiplying this with the probability we got for the $C_k$ gives the claimed lower bound. 
\end{proof}

The other ingredient is showing that we can improve a one-point estimate in the form of Assumption~\ref{ass:p2llefttail} to an estimate over the maximum of times starting from a few nearby points.

\begin{lemma}
\label{lem:intervallefttail}
    Let $\tilde{M}^{\bltriangle}_{c, d} = \max_{c \le k \le d}\left[L^{\bltriangle}(0, k; n) - (n - k)\mu\right]$. Fix $\epsilon > 0$ and suppose $a \le \floor{n^{2/3 - \epsilon}}$. Suppose $c_l > 0$ is such that for constants $T,\, \rho > 0$ and $0 < \zeta < \frac{1}{2}$, the following inequality holds uniformly for $t$ with $T \le t \le \rho n^{1/6 + \zeta}$:
    \[
        \log \PP(L^{\bltriangle}(0, 0; n) - n \mu \le -t n^{1/3}) \ge -c_l t^{3} + o(t^3).
    \]
    Then we additionally have for the smaller range $T \le t \le \rho n^{1/6 + \zeta - \epsilon}$ that
    \[
        \log \PP(\tilde{M}^{\bltriangle}_{0, a} \le -t n^{1/3}) \ge -c_l t^{3} + o(t^3).
    \]
    \end{lemma}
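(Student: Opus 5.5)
The plan is to reduce the maximum over the $a+1\le n^{2/3-\epsilon}+1$ nearby starting points to a single point-to-line time started slightly below them, and then use the one-point hypothesis. Since all the paths point $(0,k)$ with $0\le k\le a$ must travel to the antidiagonal at level $n$, the natural device is to prepend a short segment. Take the starting point $(0,-m)$, or equivalently shift coordinates and use $L^{\bltriangle}(0,0;n+m)$, with $m$ of order $a$. Any path from $(0,k)$ to the antidiagonal can be extended downward along the vertical line $x_1=0$ to a path from $(0,-m)$. This gives the deterministic inequality
\[
L^{\bltriangle}(0,-m;n) \ge L^{\bltriangle}(0,k;n) + S_{k+m},
\]
where $S_{k+m}$ is the weight of the vertical segment from $(0,-m)$ to $(0,k-1)$. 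Weights are nonnegative, but that alone is too lossy, since then $S\ge 0$ only controls $L^{\bltriangle}(0,k;n)$ and not $L^{\bltriangle}(0,k;n)-(n-k)\mu$. So I would instead intersect with the event that this vertical segment is large: $S_j\ge j\mu$ for all $j\le a+m$.

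On the intersection of $\{L^{\bltriangle}(0,-m;n)-(n+m)\mu\le -tn^{1/3}\}$ with the event $\{S_j \ge j\mu \ \forall j\}$, we get $L^{\bltriangle}(0,k;n)-(n-k)\mu\le -tn^{1/3}$ for all $0\le k\le a$ simultaneously. The first event is decreasing in the weights and the second is increasing, so Harris works against us. However, the second event depends only on weights on the segment $\{0\}\times[-m,a-1]$. I would therefore avoid correlation issues by conditioning, or by a simple decoupling: replace the first event by the event that the point-to-line time from $(1,-m)$, which avoids the column $x_1=0$, is small, and separately require the column to have large weights. Then the two events are independent, and paths using the column are handled by the column-versus-bulk comparison as in Lemma~\ref{lem:cornerdomination}. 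Alternatively one can take $m=0$ and control only the column's relevant part. The column event costs, by the cyclic-permutation ballot argument and \eqref{eq:BRthm}, about $e^{-(a+m)I(\mu)+O(\log n)} = e^{-O(n^{2/3-\epsilon})}$.

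The one-point hypothesis, applied at level $n+m$ with $t$ adjusted by $O(1)$ relative error (since $m\ll n$), gives probability $e^{-c_lt^3+o(t^3)}$. For the column cost to be $o(t^3)$ we need $n^{2/3-\epsilon}\ll t^3$, i.e. $t\gg n^{2/9-\epsilon/3}$, which fails for small $t$. This is the main obstacle: the straight column is far too costly. I would therefore replace the deterministic column by requiring only a weaker, free condition. Concretely, instead of $S_j\ge j\mu$, ask that the vertical segment weights satisfy $S_j \ge j\mu - \tfrac{\delta}{2} t n^{1/3}$, which has probability bounded below by a constant (random walk with drift $1-\mu<0$ over $n^{2/3-\epsilon}$ steps has fluctuations $n^{1/3-\epsilon/2}\ll tn^{1/3}$, though the negative drift $(\mu-1)j$ is a problem). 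To fix the drift, use diagonal-ish shifts instead. Start from the point $(-m',-m)$ chosen so that the characteristic direction aligns, and prepend a geodesic-like segment whose passage time concentrates at $\mu\times$length with fluctuations $(a)^{1/3}\ll tn^{1/3}$. Specifically, use $L(-m,-m;0,k)\ge (m+k/2)\mu\cdot\text{const}-$ small with probability close to $1$ uniformly in $k\le a$. Its time-constant discrepancy is $O(a^2/m)$, which is $\ll n^{1/3}$ if $m\approx n^{1-\epsilon}$. This makes the uniform-in-$k$ control hold with probability $\ge 1/2$ via coalescence estimates \cite{Z:coalescence}.

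Then FKG is still the wrong sign. So I would make the prepended region disjoint from the bulk: restrict to paths from $(-m,-m)$ that enter $\{x_1\ge 0\}$ only through $\{(0,k)\}$, using independence of the disjoint regions $\{x_1<0\}$ and $\{x_1\ge0\}$. The product of the one-point bound at scale $n+2m$, where the relative error is $n^{-\epsilon}$, absorbed into $o(t^3)$, and a constant completes the proof. This loses $\epsilon$ in the range of $t$, which matches the statement.
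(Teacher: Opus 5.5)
Your final construction matches the paper's setup: extend back to $(-m,-m)$ with $m\approx n^{1-\epsilon}$, use the concatenation $L^{\bltriangle}(-m,-m;n)\ge L(-m,-m;z_k)-\omega_{z_k}+L^{\bltriangle}(z_k;n)$ with $z_k=(0,k)$, and absorb the curvature loss $(2m+k)\mu-\mu(m,m+k)=O(a^2/m)=O(n^{1/3-\epsilon})$. The gap is in the last step, where you bound $\PP(E_1\cap E_2)$ below by $\PP(E_1)\PP(E_2)$. Here $E_1$ is the event that the extended point-to-line time is small, and $E_2$ is the event that all prepended times are large.

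You correctly note that Harris has the wrong sign, but your decoupling does not remove the dependence. The extended paths run through the prepended region, so $E_1$ depends on exactly the weights that determine $E_2$. Restricting the extended paths to enter $\{x_1\ge 0\}$ only through the column $\{(0,k)\}$ does not change this. The restricted time is a maximum over $k$ of a prepended time plus $L^{\bltriangle}(z_k;n)$, so it is still a function of both regions. Since $E_1$ is decreasing and $E_2$ increasing in those weights, Harris gives $\PP(E_1\cap E_2)\le \PP(E_1)\PP(E_2)$, the opposite of what you need. Moreover, $\PP(E_1)\approx e^{-c_l t^3}$ is tiny, so knowing $\PP(E_2)\ge 1/2$ says nothing about $\PP(E_1\cap E_2)$; a priori $E_1\subseteq E_2^c$ is consistent with everything you have established.

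The missing idea is to drop correlation inequalities and use $\PP(E_1\cap E_2)\ge \PP(E_1)-\PP(E_2^c)$. This needs $\PP(E_2^c)=o(\PP(E_1))$: the complement of $E_2$ must be far rarer than $e^{-c_l t^3}$, not merely of probability at most $1/2$. That requirement fixes the slack $s$ in $E_2=\{L(-m,-m;z_k)-\omega_{z_k}\ge (2m+k)\mu-s \text{ for all } k\le a\}$ from two sides.
\begin{itemize}
\item First, $s$ must be $o(tn^{1/3})$, so that replacing $t$ by $t+O(sn^{-1/3})$ in the one-point hypothesis costs only $o(t^3)$. A slack equal to a fixed fraction of $tn^{1/3}$, as in your column attempt, inflates $c_l$ by a constant factor.
\item Second, $s$ must exceed the fluctuation scale of the prepended times by a polynomial factor. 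That scale is $m^{1/3}$, not $a^{1/3}$ as you wrote. This is what lets a quantitative lower-tail bound for near-diagonal point-to-point times (the paper uses \eqref{eq:lefttail} together with \cite{LR:betadeviations}), combined with a union bound over the $a+1$ values of $k$, give $\PP(E_2^c)\le (a+1)\exp\bigl(-c(s/m^{1/3})^3\bigr)$.
\end{itemize}
With $m=n^{1-\epsilon}$ and $s=tn^{1/3-\epsilon/6}$, this is $(a+1)\exp(-ct^3n^{\epsilon/2})$, which is negligible against $e^{-c_l t^3(1+o(1))}$ for $t\ge T$. Hence $\PP(\tilde{M}^{\bltriangle}_{0,a}\le -tn^{1/3})\ge \PP(E_1)(1-o(1))$. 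Uniformity in $k$ comes from this union bound, not from coalescence. No statement of the form ``probability at least $1/2$'' can be made to suffice here.
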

\begin{proof}
It suffices to prove the claim with the largest possible $a$, so fix $a = \floor{n^{2/3 - \epsilon}}$. Write $b = \floor{n^{1 - \epsilon}}$ and fix $\delta = \epsilon/6$. Consider the passage time from a point behind our region of interest, $L' = L^{\bltriangle}(-b, -b; n)$. We introduce a gap $\alpha = t n^{1/3 - \delta}$ and define the event:
    \[
        B = \left\{L' - (n + 2b)\mu \le - t (n + 2b)^{1/3} - \alpha\right\}.
    \]
    We will show that on $B$, a small value of $L'$ forces a small value in each $L^{\bltriangle}(0, k; n)$. Put $z_k=(0,k)$ and define the endpoint-excluded time
    \[
        X_k = L(-b,-b;z_k) - \omega_{z_k}.
    \]
    Let $Y_k=L^{\bltriangle}(z_k; n)$ be the point-to-line passage times we are interested in controlling, and define also the events
    \[
        A_k = \left\{Y_k - (n - k)\mu > -t n^{1/3}\right\}.
    \]
    Concatenation through $z_k$ gives $L'\ge X_k + Y_k$. Hence, for $B$ and $A_k$ to occur together, it must be that
    \begin{align*}
        X_k &\le L' - Y_k \\
        &\le (n + 2b)\mu - (n - k)\mu - t (n + 2b)^{1/3} - \alpha + t n^{1/3}\\
        &= (2b + k)\mu - t\bigl((n + 2b)^{1/3} - n^{1/3}\bigr) - \alpha.
    \end{align*}
    Observe that the variables $X_k$ and $Y_k$ are independent. We should centre $X_k$ by the time constant $\mu(b, b+k) = (\sqrt{b} + \sqrt{b + k})^2$, and we may approximate the difference
    \[
    (2b + k)\mu - \mu(b, b + k) = \frac{k^2}{4b} + O\bigl(\frac{k^3}{b^2}\bigr).
    \]
    For $k \le a = \floor{n^{2/3-\epsilon}}$ and $b = n^{1-\epsilon}$, we have $k^2 / b \le n^{1/3 - \epsilon}$. Additionally, $t\brac[\big]{(n+2b)^{1/3} - n^{1/3}} \ge 0$. Substituting these into the constraint yields:
    \[
        X_k - \mu(b, b + k) \le n^{1/3-\epsilon} - \alpha + O(n^{-\epsilon}).
    \]
    Because $\delta = \epsilon/6 < \epsilon$, the shift $\alpha = t n^{1/3-\delta}$ dominates the other term. (Here we need $t \ge T$.) Thus $X_k$ is forced deep into its lower-tail:
    \[
        X_k - \mu(b, b + k) \le - t n^{1/3-\delta} (1 - o(1)).
    \]
    Observe that $t n^{1/3-\delta} \ge T n^{1/3 - \delta} > T b^{1/3}$ and $t n^{1/3-\delta} \le \rho n^{1/2 + \zeta - \epsilon - \delta} < \rho b^{1/2 + \zeta}$. A fluctuation of size $t n^{1/3-\delta}$ thus sits in the range in which \eqref{eq:lefttail} applies (with $n^{1 - \epsilon}$ in place of $n$), and hence
    \[
        \PP(B \cap A_k) \le \exp\left( -c \frac{(t n^{1/3-\delta})^3}{b} \right) \leq \exp\left( -c t^3 n^{\epsilon/2} \right),
    \]
    for some constant $c > 0$. Strictly speaking, the bound in \eqref{eq:lefttail} is valid only for passage times in the diagonal direction and we should prove uniformity across directions. However, we do not need the exact constant and the bound from \cite[Theorem 2]{LR:betadeviations} suffices.

    We now compare this single-path penalty to the rarity of the global event $B$. Let $t'$ be the effective threshold such that $t' (n + 2b)^{1/3} = t (n+2b)^{1/3} + \alpha$. This implies $t' = t + t n^{-\delta}  + O(t n^{-(\delta + \epsilon)})$. All of this can be absorbed into the error when applying the assumed lower-tail estimate for $L'$:
    \begin{align*}
        \log \PP(B) &\ge -c_l (t')^{3} + o((t')^{3})\\
                    &\ge -c_l t^3 + o(t^3).
    \end{align*}
    The penalty on $X_k$, which scales as $\exp(-c t^3 n^{\epsilon/2})$, is vastly more severe than the probability of $B$, which scales as $\exp(-c_l t^3)$ (abbreviating lower order terms). Thus, $\PP(B \cap A_k)$ is overwhelmingly smaller than $\PP(B)$. Applying the union bound over $k \in [0, a]$ with $a = n^{2/3-\epsilon}$:
    \[
        \PP\left(B \cap \bigcup_{k=0}^{a} A_k\right) \le \sum_{k = 0}^a \PP(B \cap A_k) \le (a+1) \exp\left(-c t^3 n^{\epsilon/2}\right) = o(1)\PP(B).
    \]
    Taking complements relative to $B$:
    \[
        \PP\left(B \cap \bigcap_{k=0}^{a} A_k^c\right) = \PP(B) (1 - o(1)).
    \]
    Since $\bigcap_{k=0}^a A_k^c = \left\{\tilde{M}^{\bltriangle}_{0,a} \le -t n^{1/3}\right\}$, we obtain the lower bound:
    \[
        \log \PP(\tilde{M}^{\bltriangle}_{0,a} \le - t n^{1/3}) \ge \log \PP(B) + o(1) \ge -c_l t^3 + o(t^{3}).
    \]
\end{proof}
\begin{proof}[Proof of the lower bound in Theorem~\ref{thm:probability}]
We demonstrate an event on which we see $\cornerpath = \gamma_n^*$ and lower bound its probability. Lemma~\ref{lem:cornerdomination} controls the probability needed to eliminate shortcuts from the corner through the bulk, while Lemma~\ref{lem:intervallefttail} lets us loosen the tilting on the initial and final segments of $\cornerpath$ to improve our probability.

Recall the notation from Lemma~\ref{lem:cornerdomination}, where $C_k$ is the weight along $(0, n - k) \to (0, n)$ and $L_k^{\bltriangle} = L^{\bltriangle}(1, n - k - 1; n)$. Also introduce $\tilde{C}_k$ for the weight along $(0, n) \to (k, n)$, and the line-to-point passage time $\tilde{L}_k^{\trtriangle} = L^{\trtriangle}(n + 1; k + 1, n - 1)$. Fix $c_l>0$ satisfying the conditions of Lemma~\ref{lem:intervallefttail}, with associated exponent $\zeta>0$. Set $a = \floor{n^{3/5}}$ and choose $0<\epsilon<\min\{1/15,\zeta\}$. (Any exponent in $(\frac{1}{2}, \frac{2}{3})$ can be made to work equally well.) Using $\alpha > 0$ as a value to be determined later, consider events
\begin{align*}
    A_1 &= \set{C_k \ge k \mu \ge L^{\bltriangle}_k \text{ for all } 1 \le k \le n - a - 1},\\
    A_2 &= \set{C_{k} \ge k \mu - \alpha\sqrt{n}, \text{ for all } n - a \le k \le n - 1},\\
    A_3 &= \set{L^{\bltriangle}_k \le k \mu - \alpha\sqrt{n} \text{ for all } n - a \le k \le n - 1}.
\end{align*}
Set $A = A_1 \cap A_2 \cap A_3$. Observe that on event $A$, the passage time of any path from the origin to level $n$ is dominated by the vertical path $(0, 0) \to (0, n)$. Consider also the corresponding event $\tilde{A}$ on the north-east half of the square less the antidiagonal, involving the variables $\tilde{C}_k$ and $\tilde{L}_k^{\trtriangle}$, which guarantees that paths from level $n + 1$ to $(n, n)$ are dominated by $(1, n) \to (n, n)$. These events are independent and on their intersection we have $\gamma_n^* = \cornerpath$.

We set about putting a lower bound on $\PP(A)$, looking at its constituent events in turn. Lemma~\ref{lem:cornerdomination} gives the bound on $A_1$, which is
\begin{equation}
    \label{eq:A1lowerbound}
    \PP(A_1) \ge \exp\brac{-(n - a)I(\mu) - c n^{1/3 + \epsilon}}.
\end{equation}
For $A_2$, it is strongly correlated with $A_1$ and so we write
\(
\PP(A_1 \cap A_2) = \PP(A_1)\PP(A_2 \mid A_1).
\) 
Conditioning on $A_1$ in particular means conditioning on $C_{n - a - 1} \ge (n - a - 1)\mu$, which is greater (for all but small $n$) than the value $(n - a)\mu - \alpha\sqrt{n}$ we want for $C_{n - a}$. We can get a lower bound by looking at the (unconditioned) probability of the event
\[
    \set{C_{n - a + k} - C_{n - a - 1} \ge (k + 1)\mu - \alpha\sqrt{n}, \text{ for all } 0 \le k \le a - 1}.
\]
This is just the probability of a random walk remaining above a linear boundary. Using the Bahadur-Rao theorem of \eqref{eq:BRthm} with a linear approximation of $I$ near $\mu$, it is then not difficult to get
\begin{equation}
\label{eq:A2lowerbound}
\PP(A_2 \mid A_1) \ge \exp\brac[\big]{- a I(\mu) + (1 - \delta)\alpha \sqrt{n} / 2 + o(\sqrt{n})}.
\end{equation}
Finally, $A_3$ and $A_1 \cap A_2$ are decreasing in the bulk weights and increasing in the weights along $\cornerpath$ (trivially in the case of $A_3$). The Harris inequality applied to the two sets of weights shows that they are positively correlated, and so
\[
\PP(A_3 \mid A_1 \cap A_2) \ge \PP(A_3).
\]
By Lemma~\ref{lem:intervallefttail}, the unconditioned probability of $A_3$ has
\begin{equation}
    \label{eq:A3lowerbound}
    \PP(A_3) \ge \exp\brac[\bigg]{- (1 + \delta) c_l \alpha^3 \sqrt{n}},
\end{equation}
which holds for any $\delta > 0$ and $n$ large enough. We may now combine \eqref{eq:A1lowerbound}, \eqref{eq:A2lowerbound} and \eqref{eq:A3lowerbound} and obtain
\begin{equation*}
    \PP(A) \ge \exp\brac[\big]{- n I(\mu) - c n^{1/3 + \epsilon} + (1 - \delta)\alpha \sqrt{n} / 2 - (1 + \delta) c_l \alpha^3 \sqrt{n} + o(\sqrt{n})}.
\end{equation*}
At this point we are free to choose $\alpha > 0$ to give a good bound on the right. It is enough to take $\alpha = (6 c_l)^{-1 / 2}$ and $\epsilon\in (0,1/6)$, for which the expression becomes
\[
    \PP(A) \ge \exp\brac[\big]{- n I(\mu) + \frac{(1 - 2\delta)\sqrt{n}}{3\sqrt{6 c_l}} + o(\sqrt{n})}.
\]
The same can be done for $\tilde{A}$, and we are left with the lower bound
\begin{align*}
    \PP(\cornerpath = \gamma^*_n) &\ge \PP(A  \cap \tilde{A})\\
    &\ge \exp\brac[\big]{- 2n I(\mu) + \frac{2(1 - 2\delta)\sqrt{n}}{3\sqrt{6 c_l}} + o(\sqrt{n})}.
\end{align*}
Taking $n \to \infty$ and then $\delta \to 0$ gives
\begin{equation}
    \label{eq:cornerproblower}
    \liminf_{n \to \infty} \frac{\log \PP (\cornerpath = \gamma^*_n) + 2n I(\mu)}{\sqrt{2 n}} \ge \frac{1}{3\sqrt{3 c_l}}.
\end{equation}
One can substitute the value $c_l = \frac{1}{48}$ from Assumption~\ref{ass:p2llefttail} to arrive at a lower bound of $\frac{4}{3}$, which matches the upper bound we obtained unconditionally.
\end{proof}

We make some observations on the proof of the lower bound which will lead into Corollary~\ref{cor:staircases}. For us to have $\cornerpath = \gamma^*_n$, it is not enough for the corner path to beat a single path going through the bulk (that is, to have $L(\cornerpath) \ge L(1, 0; n, n - 1)$). Rather, each segment of the corner path must beat the bulk path connecting that segment's endpoints.

\begin{figure}
    \centering
    \resizebox{0.6\textwidth}{!}{\input{fig_staircase}}
    \caption{The staircase $\gamma^k_n$, shown with $k = 3$. We consider a half-space geodesic lying below the staircase and many geodesics going through the bulk of the staircase's steps. The cyan paths are unrestricted point-to-point geodesics, while the outermost magenta paths are half-space geodesics above the diagonal. The geodesics are spaced so that they do not meet, except on an event of negligible probability. The segments of the staircase which border the interior of the bulk geodesics (the shaded regions) must have large weights to successfully discourage shortcuts. These segments comprise all but the two ends of the path. As we scale $n$ and the number of bulk geodesics grows polynomially, this effect suppresses the overall probability of the event $\set{\gamma^k_n = \gamma^*_n}$ and pushes it below our value for $\set{\cornerpath = \gamma^*_n}$.}
    \label{fig:staircase}
\end{figure}

To make the corner path competitive with typical bulk paths, we must essentially tilt the weights so that their mean is $\mu$, incurring a cost of $e^{-2n I(\mu)}$. The $\Omega(\sqrt{n})$ gain we see in Theorem~\ref{thm:probability} comes from the ability to weaken the weights in an $n^{2/3}$ interval at the endpoints of the path. The trade-off is that we must also weaken the bulk paths between these endpoints. Lemma~\ref{lem:intervallefttail} shows that when the endpoints are this close, this is essentially the same as weakening a single passage time, $L(1, 0; n, n - 1)$. By optimising the benefit of weakening the weights along $\cornerpath$ against the cost of weakening the bulk passage time, we obtain the lower bound.

Imagine weakening weights on $\cornerpath$ nearer the corner, say around $(0, \frac{n}{2})$. These weights are needed in beating $L(1, k; n - k, n - 1)$ for all $k = 0, \dots, \frac{n}{2}$. Even with the consideration of coalescence, this requires around $n^{1/3}$ essentially independent lower-tail events. Thus we see that smaller weights anywhere except near the endpoints incur an inordinate cost. 

In the case of a staircase path $\gamma^k_n$, the weights furthest from the corners, and hence easiest to shift downward, live near the two endpoints of the path. We see very little gain from the interior blocks of the paths.

For fear that the idea of the proof of Corollary~\ref{cor:staircases} becomes lost in the details, the core of the argument is depicted in Figure~\ref{fig:staircase}.

\begin{proof}[Proof of Corollary~\ref{cor:staircases}]
    The proof is in essence an implementation of the idea described above, but to carry it out formally we must do quite a bit of tedious bookkeeping. For simplicity assume $n$ is a multiple of $k$, write $n = m k$, and assume that $Q = m^{1/10}$ is an integer and that $m$ is even. Consider points along the path
    \begin{align*}
        u^i_j &= (j m, j m + i m^{9/10}),\\
        v^i_j &= \bigl((j + 1) m - i m^{9 / 10}, (j + 1)m \bigr),
    \end{align*}
    where $i \in \bbrac{0, Q}$ and $j \in \bbrac{0, k - 1}$. Use these to define weights along segments of the staircase
    \begin{align*}
        D^i_j &= L(u^i_j + e_2 \to u^{Q}_j \to v^i_j - e_1),\\
        \tilde{D}^i_j &= L(v^i_j \to v^0_j \to u^i_{j + 1}).
    \end{align*}
    Here the ranges are $i \in \bbrac{0, Q - 1}$, and $j \in \bbrac{0, k - 1}$ for the first set of variables and $j \in \bbrac{0, k - 2}$ for the second. Also define bulk passage times
    \begin{align*}
        L^i_j &= L(u^i_j + e_1; v^i_j - e_2),\\
        \tilde{L}^i_j &= L(v^i_j - e_1 + e_2; u^i_{j + 1} - e_1 + e_2).
    \end{align*}
    For these we want to take $i \in \bbrac{1, Q - 1}$ but $j$ as before. For $i = 0$, define specially half-space times
    \[
        L^0_j = L^{\tltriangle}_j = L^{\tltriangle}(u^0_j + e_1, v^0_{j} - e_2).
    \]
    Let $L^{\brtriangle} = L^{\brtriangle}(2; 0; n, n - 2)$ be a half-space time spanning the whole breadth of $\gamma^k_n$. A diagram depicting some of these variables is given in Figure~\ref{fig:staircase_notation}.

    \begin{figure}
        \centering
        \resizebox{0.7\textwidth}{!}{\input{fig_staircase_notation.tex}}
        \caption{Some of the notation introduced for points on the staircase $\gamma^k_n$, and passage times on and near it. In the diagram $i' < i$. Note that the points $u^i_j$ move upward as $i$ increases, while $v^i_j$ move leftward.}
        \label{fig:staircase_notation}
    \end{figure}
    
    The various offsets are chosen so that passage times with different indices $j$ are completely independent, and so that $L^{\brtriangle}$ is independent of all other passage times. Additionally, each is independent of the weights on $\gamma^k_n$. We will go a step further and treat \emph{all} of the times defined above as independent. This is justified in Appendix~\ref{app:transflucts}, wherein we show that geodesics of length at most $n$ with endpoints separated by $m^{9/10}$ will be disjoint with probability at least $1 - \exp(- c m^{7 / 10})$. As the probabilities involving the passage times will be on the order of $\exp(- c \sqrt{n})$, we may treat the effects of interaction of geodesics as negligible error terms. Proposition~\ref{prop:highprobindependence} contains the precise estimate justifying this simplification.

    In order for $\gamma^k_n$ to be the geodesic, it must in particular be true that subpaths have higher weight than shortcuts across their corners. We have an inclusion of events
    \begin{align}
    \label{eq:staircaseinclusion}
        \set{\gamma^k_n = \gamma^*_n} &\subseteq \set{L(\gamma^k_n) \ge L^{\brtriangle}} \cap \set{D^0_0 \ge L^{\tltriangle}_0} \cap \set{D^0_{k - 1} \ge L^{\tltriangle}_{k - 1}}\\ 
        &\qquad \cap \bigcap_{j = 0}^{k - 1}\bigcap_{i = 1}^{Q - 1}\set{D^i_j \ge L^i_j} \cap \bigcap_{j = 0}^{k - 2}\bigcap_{i = 1}^{Q - 1}\set{\tilde{D}^i_j \ge \tilde{L}^i_j}.\notag
    \end{align}
    We put an upper bound on the probability of the intersection on the right. In the spirit of \eqref{eq:maxunionbound} in the proof of the upper bound in Theorem~\ref{thm:probability}, there is a union bound by a discrete sum of events involving the bulk passage times and corner sums. The rest of the proof proceeds by estimating the maximum term in this sum, which is the maximiser of the joint density of the variables constrained to the events in \eqref{eq:staircaseinclusion}. By an appeal to Laplace's method, this dominates the contributions of the other terms. We do not show carefully how to bound the off-maximum terms for reasons of length, but this may be carried out in the same way as we did for \eqref{eq:maxunionbound}, just with more factors.

    Before doing that, we introduce yet more notation. For $i \in \bbrac{0, Q - 1}$, set
    \begin{align*}
        r^i_j &= m^{-1/3}\brac[\Big]{L\brac[\big]{u^i_j + e_2 \to u^{i + 1}_j} -  \mu m^{9 / 10}},\\
        \tilde{r}^i_j &= m^{-1/3}\brac[\Big]{L\brac[\big]{v^{i + 1}_j \to v^i_j - e_1} -  \mu m^{9 / 10}}.
    \end{align*}
    These are normalised weights along vertical and horizontal, respectively, subsegments of the staircase path. Observe that the suitably normalised forms of the $D^i_j$ are just sums
    \[
        m^{-1/3} \brac[\big]{D^i_j - 2\mu (m - i m^{9 / 10})} = \sum_{\iota = i}^{Q - 1} \sqrbrac{r^{\iota}_j + \tilde{r}^{\iota}_j},
    \]
    and also that
    \[
        m^{-1/3} \brac[\big]{\tilde{D}^i_j - \omega_{v^0_j}- 2\mu i m^{9 / 10}} = \sum_{\iota = 0}^{i - 1} \sqrbrac{\tilde{r}^{\iota}_j + r^{\iota}_{j + 1}}.
    \]
    We choose subsequently to ignore the correction by $\omega_{v^0_j}$. Introduce as well
    \begin{align*}    
        l^i_j &= m^{-1/3}\bigl(L^i_j - 2 \mu (m - i m^{9 / 10})\bigr),\quad 
        \tilde{l}^i_j = m^{-1/3}\bigl(\tilde{L}^i_j - 2 \mu i m^{9 / 10}\bigr),
    \end{align*}
    to be normalised versions of the bulk passage times. Similarly for the  half-space times, let $l^{\tltriangle}_j = m^{-1/3}(L^{\tltriangle}_j - 2\mu m)$ and $l^{\brtriangle} = m^{-1/3}(L^{\brtriangle} - 2\mu n)$. We have the following bounds for the tails of each of these variables, wherein we ignore lower order terms: 
    \begin{align*}
        \log \PP(r^i_j \ge - \lambda) &\le - m^{9 / 10}I(\mu) + \frac{\lambda m^{1/3}}{2}\\
        \log \PP(l^i_j \le - \lambda) &\le - c_f \lambda^3\\
        \log \PP(l^{\tltriangle}_j \le - \lambda) &\le - c_h \lambda^3\\
        \log \PP(l^{\brtriangle} \le - \lambda) &\le - \frac{c_h}{k} \lambda^3.
    \end{align*}
    Here $c_f = \frac{1}{192}$ is the constant appearing in the moderate deviation estimate \eqref{eq:lefttail} and $c_h > 0$ is the half-space equivalent. That $c_h$ exists follows from the bounds of \cite{LR:betadeviations}, setting $\beta = 4$ for the case corresponding to half-space passage times. We give our inequalities for $\lambda > 0$, as there is no probability gain to be had by letting the passage times be bigger than typical. The passage times across shorter distances have lower variance and more drastic tail decay, but we choose to use a uniform bound.

    There is no gain to be had in taking the values of $r^i_j$, $\tilde{r}^i_j$ positive (and tilting the weights on $\gamma^k_n$ beyond a mean of $\mu$), so we may always imagine them negative. Recall the events in \eqref{eq:staircaseinclusion} and condition on the values $r^i_j = - x^i_j$ and $\tilde{r}^i_j = -\tilde{x}^i_j$, for all $i,\, j$ and $x^i_j,\, \tilde{x}^i_j$ being positive. Write $\PP_{x}$ for the probability under this conditioning. The log-probabilities of the various events have:
    \begin{align*}
        \log \PP_x\brac{L(\gamma^k_n) \ge L^{\brtriangle}} &\le - \frac{c_h}{k}\brac[\bigg]{\sum_{j = 0}^{k - 1}\sum_{i = 0}^{Q - 1}\sqrbrac{x^i_j + \tilde{x}^i_j}}^3\\
        \log \PP_x\brac{D^0_0 \ge L^{\tltriangle}_0,\, D_{k - 1}^0 \ge L^{\tltriangle}_{k - 1}} &\le - c_h \brac[\bigg]{\sum_{i = 0}^{Q - 1}\sqrbrac{x^i_0 + \tilde{x}^i_0}}^3
        \\      &\qquad\quad - c_h \brac[\bigg]{\sum_{i = 0}^{Q - 1}\sqrbrac{x^i_{k - 1} + \tilde{x}^i_{k - 1}}}^3\\
        \log \PP_x\brac{D^i_j \ge L^i_j \text{ for all } i,\, j} &\le - c_f \sum_{j = 0}^{k - 1}\sum_{i = 1}^{Q - 1}\brac[\bigg]{\sum_{\iota = i}^{Q - 1}\sqrbrac{x^\iota_{j} + \tilde{x}^\iota_{j}}}^3\\
        \log \PP_x\brac{\tilde{D}^i_j \ge \tilde{L}^i_j \text{ for all } i,\, j} &\le - c_f\sum_{j = 0}^{k - 2}\sum_{i = 1}^{Q - 1}\brac[\bigg]{\sum_{\iota = 0}^{i - 1}\sqrbrac{\tilde{x}^\iota_{j} + x^\iota_{j + 1}}}^3.
    \end{align*}
    The log-cost of this conditioning, on the other hand, is 
    \[
        - 2 n I(\mu) + \frac{m^{1/3}}{2}\sum_{j = 0}^{k - 1}\sum_{i = 0}^{Q - 1} \sqrbrac{x^i_j + \tilde{x}^i_j}.
    \]
    Recall also that it is enough to consider these events as independent. To estimate the probability of the intersection in \eqref{eq:staircaseinclusion} then, we ought to optimise the sum of these log-probabilities over the values of the $x^i_j$, $\tilde{x}^i_j$. The $- 2 n I(\mu)$ contribution remains constant, so all told, the target function we should optimise is
    \begin{multline}
    \label{eq:staircasetarget}
        F(x, \tilde{x}) := \frac{m^{1/3}}{2}\sum_{j = 0}^{k - 1}\sum_{i = 0}^{Q - 1} \sqrbrac{{x^i_j} + {\tilde{x}^i_j}}- \frac{c_h}{k}\brac[\bigg]{\sum_{j = 0}^{k - 1}\sum_{i = 0}^{Q - 1}\sqrbrac{x^i_j + \tilde{x}^i_j}}^3 \\
            - c_h \brac[\bigg]{\sum_{i = 0}^{Q - 1}\sqrbrac{x^i_0 + \tilde{x}^i_0}}^3 - c_h \brac[\bigg]{\sum_{i = 0}^{Q - 1}\sqrbrac{x^i_{k - 1} + \tilde{x}^i_{k - 1}}}^3 \\
            - c_f \sum_{j = 0}^{k - 1}\sum_{i = 1}^{Q - 1}\brac[\bigg]{\sum_{\iota = i}^{Q - 1}\sqrbrac{x^\iota_{j} + \tilde{x}^\iota_{j}}}^3 - c_f \sum_{j = 0}^{k - 2}\sum_{i = 1}^{Q - 1}\brac[\bigg]{\sum_{\iota = 0}^{i - 1}\sqrbrac{\tilde{x}^\iota_{j} + x^\iota_{j + 1}}}^3.
    \end{multline}
    It is something of a calculation to get the asymptotic bound on $F$, and we split this off into Lemma~\ref{lem:targetbound}. We find eventually that 
    \[
        \limsup_{n \to \infty}\frac{\max_{x, \tilde{x} \ge 0}F(x, \tilde{x})}{\sqrt{2 n}} \le \frac{1}{3\sqrt{3c_h(k+1)}}.
    \]
    Carrying out the Laplace method calculation beginning from \eqref{eq:staircaseinclusion} then, one arrives at
    \[
        \limsup_{n \to \infty} \frac{\log \PP (\gamma^k_n = \gamma^*_n) + 2n I(\mu)}{\sqrt{2 n}} \le \frac{1}{3\sqrt{3c_h(k+1)}}.
    \]
    This is to be compared with the lower bound for $\cornerpath$ we obtained in \eqref{eq:cornerproblower}, of $1 / \brac{3 \sqrt{3 c_l}}$. The gain for the staircase will be strictly less than that for $\cornerpath$ provided
    \[
        k \ge \frac{c_l}{c_h}.
    \]
    The conjectured optimal value for $c_h$ in this regime is $\frac{c_f}{2} = \frac{1}{384}$ (see \cite[Theorem 1.6]{BBBK:tail-bounds}). Assuming this value of $c_h$ and the value $c_l = \frac{1}{48}$ from Assumption~\ref{ass:p2llefttail}, the condition becomes $k \ge 8$.
    \end{proof}
        
    \begin{lemma}
    \label{lem:targetbound}
    Let $F(x, \tilde{x})$ be as defined in \eqref{eq:staircasetarget}, with $x = (x^i_j)_{i \in \bbrac{0, Q - 1}, j \in \bbrac{0, k - 1}}$ a collection of positive reals, and $\tilde{x}$ the same. Then
    \[
        \limsup_{n \to \infty}\frac{\max_{x, \tilde{x} \ge 0}F(x, \tilde{x})}{\sqrt{2 n}} \le \frac{1}{3\sqrt{3c_h(k+1)}}.
    \]
    \end{lemma}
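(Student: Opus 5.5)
The plan is to show that the maximiser of $F$ can only profit from the two extreme variables $x^0_0$ and $\tilde{x}^0_{k-1}$. Every other variable is penalised by about $Q = m^{1/10}$ separate cubic terms, so its contribution is $o(\sqrt m)$. Throughout write $A = m^{1/3}/2$ for the coefficient of the linear term, and recall $n = mk$.

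\emph{Step 1: counting how often each variable is penalised.} Call $a = x^0_0$ and $b = \tilde{x}^0_{k-1}$ the \emph{extreme} variables, and all others \emph{interior}. Let $y$ denote the total of the interior variables. From the index ranges in \eqref{eq:staircasetarget}, an interior variable of the form $x^\iota_j$ or $\tilde x^\iota_j$ with $\iota \ge 1$ appears in the inner sum of the $c_f$ term of block $j$ for exactly the indices $i \le \iota$. It appears in the $\tilde{D}$-type $c_f$ term of block $j-1$ (for $x$) or block $j$ (for $\tilde x$) for the indices $i > \iota$. Variables with $\iota = 0$ appear in the $\tilde D$-type terms for all $i = 1,\dots,Q-1$. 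In every case an interior variable lies in exactly $Q-1$ of the cubed partial sums. There are at most $N = 2k(Q-1)$ such cubed sums, and all entries are nonnegative. Their total is therefore exactly $(Q-1)y$, so Jensen (power mean) gives
\[
\sum (\text{partial sum})^3 \ge N\Bigl(\frac{(Q-1)y}{N}\Bigr)^3 = \frac{(Q-1)\,y^3}{4k^2}.
\]
I would verify this bookkeeping of index ranges carefully against the definitions of $D^i_j$ and $\tilde D^i_j$. I expect this to be the only fiddly point. Only the inequality "each interior variable appears in at least $Q-1$ sums, out of at most $2kQ$ sums" is needed, and an undercount by a constant factor would be harmless.

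\emph{Step 2: splitting $F$.} Discard all remaining negative terms except three: the global term, the two endpoint $c_h$ terms, and the $c_f$ bound from Step 1. Use monotonicity to replace $(a+b+y)^3$ by $(a+b)^3$ in the global term, and $\bigl(\sum_i [x^i_0+\tilde x^i_0]\bigr)^3 \ge a^3$, and similarly $\ge b^3$ for block $k-1$. This gives
\[
F \le \Bigl[A(a+b) - \tfrac{c_h}{k}(a+b)^3 - c_h a^3 - c_h b^3\Bigr] + \Bigl[A y - \tfrac{c_f (Q-1)}{4k^2} y^3\Bigr].
\]
The second bracket is at most $\tfrac{2}{3}A^{3/2}\bigl(3c_f(Q-1)/(4k^2)\bigr)^{-1/2} = O_k(m^{1/2} Q^{-1/2}) = o(\sqrt n)$, since $Q \to \infty$.

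\emph{Step 3: the two-variable optimisation.} By the power mean inequality, $a^3 + b^3 \ge (a+b)^3/4$. With $S = a+b$, the first bracket is therefore at most $AS - cS^3$ with
\[
c = c_h\Bigl(\frac1k + \frac14\Bigr) = \frac{c_h(k+4)}{4k} \ge \frac{c_h(k+1)}{4k}.
\]
Maximising in $S$ gives $\tfrac{2}{3}A\sqrt{A/(3c)} = \dfrac{m^{1/2}}{3\sqrt{2}\sqrt{3c}}$. With $c \ge c_h(k+1)/(4k)$ and $m = n/k$, this is at most
\[
\frac{\sqrt{2mk}}{3\sqrt{3c_h(k+1)}} = \frac{\sqrt{2n}}{3\sqrt{3c_h(k+1)}}.
\]
Adding the $o(\sqrt n)$ term from Step 2, dividing by $\sqrt{2n}$ and taking $\limsup$ yields the lemma. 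The bound is in fact slightly better, with $k+4$ in place of $k+1$.
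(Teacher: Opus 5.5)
\textbf{There is a genuine gap in your Step 1.} The bookkeeping fails at the two boundary blocks, which is exactly where the content of the lemma lies. You say that $x^\iota_j$ collects the indices $i>\iota$ from the $\tilde{D}$-type term of block $j-1$, and that $\tilde{x}^\iota_j$ collects them from block $j$. But the $\tilde{D}$-type sum in \eqref{eq:staircasetarget} runs only over $0\le j\le k-2$, so for $x^\iota_0$ and $\tilde{x}^\iota_{k-1}$ there is no such term. These variables lie in only $\iota$ of the cubed partial sums, not $Q-1$. This is the paper's mechanism (B): low on the initial segment, $x^\iota_0$ only has to beat the bulk times $L^i_0$ with $i\le\iota$.

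Consequently the displayed inequality of Step 1 is false. If every variable is zero except $x^1_0=y$, the $c_f$ terms equal $c_f y^3$, which is far below $c_f(Q-1)y^3/(4k^2)$ once $Q-1>4k^2$. The Step 2 inequality fails for the same configuration. Worse, variables such as $x^1_0$ carry only an $O(1)$ cubic coefficient in your split, so the ``interior'' bracket is of order $\sqrt m$, not $o(\sqrt n)$, and it spoils the constant. The undercount is by a factor up to $Q-1$, not a constant, so your remark that a constant-factor undercount is harmless does not cover it.

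\textbf{The repair.} It is easy, because the linear term, the global term and the two endpoint $c_h$ terms see only block sums. Take $a=\sum_{\iota=0}^{Q-1}x^\iota_0$ and $b=\sum_{\iota=0}^{Q-1}\tilde{x}^\iota_{k-1}$, the whole initial vertical and final horizontal segments, instead of single subsegment variables. The bounds $(\sum_i[x^i_0+\tilde{x}^i_0])^3\ge a^3$, the analogue for $b$, and $(a+b+y)^3\ge(a+b)^3$ still hold. After deleting the variables of $a,b$ from each partial sum (allowed by monotonicity), every remaining variable, namely $x^\iota_j$ with $j\ge1$ and $\tilde{x}^\iota_j$ with $j\le k-2$, lies in exactly $Q-1$ of the $(2k-1)(Q-1)$ sums. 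Your Jensen bound and Steps 2 and 3 then go through unchanged.

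\textbf{Comparison with the paper.} The paper does essentially this with half-segments. It isolates $p_0=\sum_{i<Q/2}x^i_0$ and $t_{k-1}=\sum_{i<Q/2}\tilde{x}^i_{k-1}$ as the only quantities without a growing $m^{1/10}$ penalty. It then replaces the global cube by a sum of cubes and optimises $p_0$ and $t_{k-1}$ separately, with coefficient $c_h(k+1)/k$ each. Your retention of the coupling $\frac{c_h}{k}(a+b)^3$ yields the slightly sharper constant with $k+4$ in place of $k+1$.
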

    \begin{proof}
    We can see from the expression for $F$ that the variables $x^i_j$ and $\tilde{x}^i_j$ which live closer to the corners will appear in more terms of the sums. It stands to reason that we should optimise $F$ by increasing the variables which are furthest from any corners, namely those near the endpoints of $\gamma^k_n$. 
    
    Let us try to reorganise the expression to make this point clearer. For simplicity of notation, we assume that $Q$ is even. Split the vertical and horizontal segments of $\gamma^k_n$ into two equal subsegments and define
    \begin{gather*}
        p_j = \sum_{i = 0}^{Q / 2 - 1} x^i_j,\qquad
        q_j = \sum_{i = Q / 2}^{Q - 1} x^i_j,\\
        s_j = \sum_{i = Q / 2}^{Q - 1} \tilde{x}^i_j,\qquad
        t_j = \sum_{i = 0}^{Q / 2 - 1} \tilde{x}^i_j.
    \end{gather*}    
    Writing the sums of the $x^i_j$ and $\tilde{x}^i_j$ in terms of the new variables, discarding contributions from bulk passage times near the corners, and replacing cubes of sums by sums of cubes, we get the following bound on $F$:
    \begin{multline*}
        F \le \frac{m^{1/3}}{2} \sum_{j = 0}^{k - 1} (p_j + q_j + s_j + t_j) - \frac{c_h}{k}\sum_{j = 0}^{k - 1}(p_j^3 + q_j^3 + s_j^3 + t_j^3) \\
            - c_h\sum_{j \in \set{0, k - 1}}(p_j^3 + q_j^3 + s_j^3 + t_j^3)\\
            - \frac{m^{1/10}c_f}{2}\sum_{j = 0}^{k - 1}(q_j^3 + s_j^3) - \frac{m^{1/10}c_f}{2}\sum_{j = 0}^{k - 2}(t_j^3 + p_{j + 1}^3).
    \end{multline*}
    When the variables are grouped together and further negative terms removed, this becomes
    \begin{align*}
        F &\le \frac{m^{1/3}}{2} \sum_{j = 0}^{k - 1} (p_j + q_j + s_j + t_j) \\
            &\quad - \frac{m^{1/10} c_f}{2}\sum_{j = 1}^{k - 2} (p_j^3 + q_j^3 + s_j^3 + t_j^3)  \\
            &\quad - \frac{m^{1/10} c_f}{2} (q_0^3 + s_0^3 + t_0^3 + p_{k-1}^3 + q_{k-1}^3 + s_{k-1}^3)\\
            &\quad - \frac{c_h(k + 1)}{k} (p_0^3 + t_{k-1}^3).
    \end{align*}
    In trying to maximise this expression, we see quite clearly that every variable except $p_0$ and $t_{k - 1}$ is hindered by large and growing negative coefficients in its cubic term. The unaffected variables are exactly those corresponding to the initial and final segments of the path. For large $m$, the maximising choice of the variables will be close to zero except for $p_0$ and $t_{k - 1}$. 

    To be more explicit, for $A, B>0$,
    \[
        \max_{x \ge 0}(Ax-Bx^3)=\frac{2}{3\sqrt{3}}A^{3/2}B^{-1/2}.
    \]
    Applying this with $A=m^{1/3}/2$ and $B=m^{1/10}c_f/2$ to each of the remaining $O(k)$ variables in the preceding upper bound gives a total contribution $O(m^{9/20}) = o(\sqrt m)$. Let 
    \[
        F':= \frac{m^{1/3}}{2}\bigl(p_0 + t_{k-1}\bigr) - \frac{c_h(k + 1)}{k} \bigl(p_0^3 + t_{k-1}^3\bigr)
    \]
    be the bound on $F$ with the growing negative terms obviated. Since $k$ is fixed and $\max F'$ is of order $\sqrt m$, it follows that \[
        \limsup_{m \to \infty} \frac{\max_{x, \tilde{x} \ge 0} F}{\max_{p, q, s, t \ge 0} F'} \le 1.
    \]
    At this point we can explicitly find the maximum of the function. It is
    \[
        \max_{p, q, s, t \ge 0} F' = \frac{\sqrt{2km}}{3\sqrt{3c_h(k+1)}} =\frac{\sqrt{2n}}{3\sqrt{3c_h(k+1)}} .
    \]        
    \end{proof}

\section{Proof of Theorem~\ref{thm:smallpmono}}
\label{sec:proofsmallpmono}
The proof of our last claim, namely the monotonicity of probabilities for small $p$, reduces to counting certain configurations of points and is largely combinatorial. 

Rather than random weights, imagine placing $k$ dots on the grid $\bbrac{0, n}^2$. For a path $\gamma \in \Gamma_n$ and integers $a + b = k$, let $C_\gamma^{a, b}$ be the number of configurations of dots with $a$ of them lying on $\gamma$ and $b$ lying off $\gamma$, and such that $\gamma$ is a geodesic. When the dots are placed uniformly at random (without overlap), then the probability of $\gamma$ being a geodesic is proportional to $\sum_{a = 0}^k C_\gamma^{a, k - a}$.

\begin{lemma}
\label{lem:trivialcounts}
    With $C_{\gamma}^{a, b}$ as above and $k \ge 1$, we have
    \[
        C_\gamma^{0, k} = 0, \quad C_\gamma^{k, 0} = \binom{2n + 1}{k}.
    \]
    When $k = 0$, we have simply $C_\gamma^{0, 0} = 1$.
\end{lemma}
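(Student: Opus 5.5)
The plan is to check each of the three identities directly from the definitions. Weights are $\mathrm{Ber}(p)$, so a configuration of dots is just the set of sites with weight $1$, and $\gamma$ is a geodesic exactly when no directed path from $0$ to $(n,n)$ collects strictly more dots than $\gamma$. Every path in $\Gamma_n$ has $2n+1$ vertices, so $\gamma$ contains exactly $2n+1$ lattice sites. Each claim then reduces to deciding which configurations make $\gamma$ maximal.

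\emph{The case $C_\gamma^{k,0}$.} Here all $k$ dots lie on $\gamma$, so $L(\gamma)=k$. Any other path collects only the dots it shares with $\gamma$, hence at most $k$. So $\gamma$ is a geodesic for every such configuration, and the count is the number of ways to choose $k$ sites among the $2n+1$ sites of $\gamma$, namely $\binom{2n+1}{k}$. (If $k>2n+1$ this is $0$, which is consistent.)

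\emph{The case $C_\gamma^{0,k}$ with $k\ge1$.} Now $L(\gamma)=0$, and there is at least one dot at some site $z\in\bbrac{0,n}^2$ off $\gamma$. Since $0\le z\le(n,n)$ coordinatewise, some directed path from $0$ to $(n,n)$ passes through $z$: go up-right from $0$ to $z$, then from $z$ to $(n,n)$. That path has passage time at least $1>0=L(\gamma)$, so $\gamma$ is not a geodesic and the count is $0$. This is the one step needing a real (if tiny) argument, namely that every site of the box lies on some directed path in $\Gamma_n$.

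\emph{The case $k=0$.} There is exactly one configuration, the empty one. Every path then has passage time $0$, so $\gamma$ is trivially a geodesic and $C_\gamma^{0,0}=1$.

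I expect no genuine obstacle. The only care needed is the convention that geodesics are defined by non-strict maximality, so ties count. This is what makes $C_\gamma^{k,0}$ the full binomial count and $C_\gamma^{0,0}=1$.
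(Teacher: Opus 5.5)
Your proposal is correct and is the same direct case check the paper has in mind; the paper states this lemma without proof as immediate. Your three observations supply exactly the implicit argument: all $k$ dots on $\gamma$ force $L(\gamma)$ to equal the total dot count; any off-path dot lies on some directed path in $\Gamma_n$, which then beats $L(\gamma)=0$; and the empty configuration makes every path a geodesic under the non-strict convention $L(\gamma)=L_n$.
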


From these values we see that the shape of $\gamma$ is irrelevant when $k = 0, 1$. When $k = 2$, the shape also fails to enter into $C_\gamma^{2, 0}$ and $C_\gamma^{0, 2}$. The value of $C_\gamma^{1, 1}$ is the first which varies with $\gamma$. It admits a representation as a discrete area integral, which we explain now.

Take paths $\gamma,\, \pi \in \Gamma_n$ and let $D(\gamma, \pi)$ be the signed region between them. By this we mean that $D(\gamma, \pi)$ is formed by the boxes in the interior of $\gamma \cup \pi$. A component takes positive sign if $\gamma$ forms the south-east boundary and negative if it forms the north-west. For convenience, we identify a box with the vertex in its north-west corner. For this reason, the points on the north-west boundary are thought to be in $D(\gamma, \pi)$, while the south-east boundary and points in $\gamma \cap \pi$ are not. If $(x, y)$ is a point in a signed region, define $\sgn_{D}(x, y) = \pm 1$ depending on the sign of the component.

For example, let $\gamma$ be the path 
\[
(0, 0) \to (0, n - 2) \to (2, n - 2) \to (2, n) \to (n, n),
\]
and $\pi$ the corner path $\cornerpath$. Then $D(\gamma, \pi)$ has a single component with positive sign,
\[
    D(\gamma, \pi) = \set{(0, n), (1, n), (0, n - 1), (1, n - 1)}.
\]
A more complicated, pictorial example is shown in Figure~\ref{fig:area}.

\begin{figure}
    \centering
    \resizebox{0.5\textwidth}{!}{\input{fig_area}}
    \caption{An illustration of the signed area $D(\gamma, \pi)$ between two paths. The dotted lines and holes denote vertices on the boundary which are not included. The interior vertices and remaining boundary vertices are included.}
    \label{fig:area}
\end{figure}

\begin{lemma}
\label{lem:areaformula}
Suppose $\gamma,\, \pi \in \Gamma_n$ and write $D = D(\gamma, \pi)$. Then 
\[
    C_\gamma^{1, 1} - C_\pi^{1, 1} = - 2 \sum_{(x, y) \in D} \sgn_D(x, y)\sqrbrac{y - x - 1}.
\]
\end{lemma}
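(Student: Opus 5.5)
The approach is to compute $C^{1,1}_\gamma$ explicitly as a constant minus a sum of a vertex function over $\gamma$. Then compare two paths by flipping one box at a time, so that the difference telescopes into a sum over the boxes of $D(\gamma,\pi)$.

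\textbf{Step 1 (characterise geodesicity).} Consider a configuration with one dot at $z\in\gamma$ and one dot at $w\notin\gamma$. Then $L(\gamma)=1$, and $L_n\ge 2$ exactly when some directed path visits both $z$ and $w$, that is, when $z$ and $w$ are comparable in the coordinatewise order. Ties at value $1$ are allowed, so $\gamma\in\Gamma^*_n$ if and only if $z$ and $w$ are incomparable. Hence
\[
C^{1,1}_\gamma=(2n+1)\bigl((n+1)^2-(2n+1)\bigr)-\#\{(z,w):z\in\gamma,\ w\notin\gamma,\ z,w \text{ comparable}\}.
\]
Count the subtracted pairs by summing over $z=(x,y)\in\gamma$. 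The number of grid points comparable to $z$, including $z$ itself, is $f(x,y)-1$, where
\[
f(x,y)=(x+1)(y+1)+(n-x+1)(n-y+1).
\]
Every point of $\gamma$ is comparable to $z$, so we subtract the $2n+1$ points of $\gamma$. This gives $C^{1,1}_\gamma=K_n-\sum_{z\in\gamma}f(z)$ with $K_n$ independent of $\gamma$. Therefore
\[
C^{1,1}_\gamma-C^{1,1}_\pi=\sum_{z\in\pi}f(z)-\sum_{z\in\gamma}f(z).
\]

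\textbf{Step 2 (single box flip).} Take the box whose north-west corner is $(x,y)$. Suppose $\gamma$ and $\pi$ agree except that $\gamma$ passes through the south-east corner $(x+1,y-1)$ and $\pi$ through $(x,y)$. Then $D(\gamma,\pi)=\{(x,y)\}$ with positive sign, matching the convention that vertices on the north-west boundary are included. A direct expansion gives
\[
f(x,y)-f(x+1,y-1)=(x-y+1)+(x-y+1)=-2(y-x-1),
\]
which is exactly the claimed formula for one box. If the roles are swapped, the sign flips, consistent with $\sgn_D=-1$.

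\textbf{Step 3 (telescoping).} In general, connect $\gamma$ to $\pi$ by a sequence of single-box flips. Within each component of $D(\gamma,\pi)$, push the path across the region one corner box at a time, each flip moving it toward the other boundary. Each box of the region is flipped exactly once, with the orientation given by the sign of its component. Boxes outside $D$ are never touched. Summing Step 2 along the sequence telescopes the left side of the difference formula and yields $-2\sum_{(x,y)\in D}\sgn_D(x,y)[y-x-1]$.

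The main obstacle is this last step. One must check that the flip sequence exists and visits each box of $D$ exactly once with the correct sign. One must also check that the bookkeeping identifying a box with its north-west corner matches the inclusion and exclusion conventions stated for boundary points. I would handle this by induction on $|D|$: in any nonempty component, the inner boundary path has a corner whose box lies in $D$, and flipping it reduces $|D|$ by one while preserving the sign structure.
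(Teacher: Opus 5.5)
Your proof is correct. It shares the paper's first step and replaces its Green-theorem step with an explicit box-flip telescoping.

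\textbf{Step 1 is the same in substance.} The paper counts the incomparable partners of each $z=(x,y)\in\gamma$ directly, giving $C^{1,1}_\gamma=\sum_{z\in\gamma}\mathcal L(z)$ with $\mathcal L(x,y)=n(x+y)-2xy$. You count comparable points and subtract. Since $\mathcal L=(n+1)^2+1-f$, the two representations differ only by a constant per vertex. Your Step 1 could be shortened by counting the $x(n-y)+(n-x)y$ incomparable points directly.

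\textbf{The second half is a different route.} The paper joins $\gamma$ and the reversal of $\pi$ into a closed loop and writes the difference as a discrete line integral of $\mathcal L\,(dx+dy)$. It then invokes a discrete Green theorem with curl $-2(y-x-1)$. You instead telescope over elementary box flips. Your identity $f(x,y)-f(x+1,y-1)=-2(y-x-1)$ is exactly that local curl, so in effect you prove the needed case of the discrete Green theorem by hand.

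\textbf{What each route buys.} The paper's argument is shorter and more conceptual. However, it leaves implicit the conventions (orientation, evaluation point, and which vertex labels a box) that make the curl attach to north-west corners. Your version checks these conventions directly: flipping a path from $(x+1,y-1)$ to $(x,y)$ removes exactly the box labelled $(x,y)$, with the sign the paper assigns.

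\textbf{The step you flag as the obstacle is sound.} Take a component on which the current path is the south-east boundary.
\begin{itemize}
\item The current path leaves $\pi$ with an east step and rejoins it with a north step. So it has an east-then-north corner $c$ strictly inside the excursion, hence $c\notin\pi$.
\item On the anti-diagonal $x+y=c_1+c_2$, the two paths' positions differ by at least one. So the flipped path still lies weakly south-east of $\pi$.
\item The flip removes from $D$ exactly the box with north-west corner $c-e_1+e_2$.
\end{itemize}
The negative components are handled symmetrically, and induction on $|D|$ completes the proof.
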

\begin{proof}
    Write $\gamma = (\gamma^0, \dots, \gamma^{2n})$ and $\gamma^i = (\gamma^i_1, \gamma^i_2)$. Suppose the weight is at $\gamma^i$. Then $\gamma$ will be a (non-unique) geodesic so long as the two weights are not connected by an up-right path. So the other weight can be in one of $\gamma^i_1 (n - \gamma^i_2) + (n - \gamma^i_1) \gamma^i_2$ locations. Hence we have the expression
    \begin{equation*}
        C_\gamma^{1, 1} = \sum_{i = 0}^{2n}\sqrbrac{\gamma^i_1 (n - \gamma^i_2) + (n - \gamma^i_1) \gamma^i_2}.
    \end{equation*}
    We want to compare this to $C_{\pi}^{1, 1}$, where $\pi$ is another path. To write their difference in a more suggestive form, define
    \begin{equation*}
        \mathcal{L}(x, y) = x (n - y) + (n - x) y = n(x + y) - 2 x y.
    \end{equation*}
    Also define $x(t) = \gamma^t_1$ when $0 \le t \le 2n$, and $x(t) = \pi_1^{4n- t}$ when $2n + 1 \le t \le 4n$. We define $y(t)$ similarly. Then $(x(t), y(t))$ traces out a closed loop formed by the union of $\gamma$ and $\pi$, and we may write 
    \begin{equation}
    \label{eq:C11Diff}
        C_{\gamma}^{1, 1} - C_\pi^{1, 1} = \sum_{t = 0}^{4n-1}\mathcal{L}(x(t), y(t))\brac{\Delta x + \Delta y}.
    \end{equation}
    We recognise this as a discrete action functional with respect to the Lagrangian $\mathcal{L}$.

    We are fortunate in that we can greatly simplify \eqref{eq:C11Diff} using the discrete Green theorem for the one-form $\mathcal L\,(dx+dy)$. Its scalar discrete curl is $-2(y - x - 1)$, and Green's theorem immediately produces the claimed formula.
\end{proof}

Although not important for our purposes, this can be turned into an explicit formula for $C_\gamma^{1, 1}$.
\begin{lemma}
With $D_\gamma = D(\gamma, \cornerpath)$,
\[
    C_\gamma^{1, 1} = n^3 - 2 \sum_{(x, y) \in D_\gamma} \sqrbrac{y - x - 1}.
\]
\begin{proof}
    We can calculate $C_{\cornerpath}^{1, 1}$ directly. If we place one dot on $(0, k)$, then there are $n k$ places in the bulk in which to place the other dot while keeping $\cornerpath$ a geodesic. The picture is symmetric for $(k, n)$, and summing gives $C_{\cornerpath}^{1, 1} = n^3$. Now using the difference formula 
    \[ 
        C_\gamma^{1, 1} = C_{\cornerpath}^{1, 1} -  2 \sum_{(x, y) \in D_\gamma} \sqrbrac{y - x - 1}.
    \]
    Here we used that all the area in $D_\gamma$ is positive. 
\end{proof}
\end{lemma}

\begin{lemma}
\label{lem:kmonotone}
    Take $\gamma \in \Gamma_n$ and let $\hat{\gamma}$ be the result of applying the bump move of Figure~\ref{fig:pathbump} to $\gamma$. Then $C_{\hat{\gamma}}^{1, 1} > C_{\gamma}^{1, 1}$. Similarly if $\hat{\gamma}$ is the result of the unwind move of Figure~\ref{fig:pathunwind}.
\begin{proof}
    We should show using Lemma~\ref{lem:areaformula} that $C_{\hat{\gamma}}^{1, 1} - C_{\gamma}^{1, 1} > 0$. Suppose the corner which is bumped is above the diagonal and thus initially pointed south-east. Then one sees that $D(\hat{\gamma}, \gamma)$ has a single negative component lying above the diagonal. The difference $y - x - 1$ is positive here, so the area formula evaluates to a positive quantity. When the corner is below the diagonal, the area is positive, while $y - x - 1$ is negative. This once again gives a positive difference.

    Following the same idea, suppose $\hat{\gamma}$ is the result of unwinding a portion from below to above the diagonal. Then $D(\hat{\gamma}, \gamma)$ is formed from a single negative region. The portion below the diagonal has its reflection about the diagonal also in $D(\hat{\gamma}, \gamma)$. If $(x, y)$ is the north-west corner of a box above the diagonal with contribution $y - x - 1$, then the reflection has north-west corner $(y - 1, x + 1)$  and opposite contribution $(x + 1) - (y - 1) - 1 = 1 + x - y$. The total contributions of these two areas cancel, and what is left is negative area above the diagonal. Here $y - x - 1$ is positive, and $C_{\hat{\gamma}}^{1, 1} - C_{\gamma}^{1, 1} > 0$.
\end{proof}
\end{lemma}

\begin{proof}[Proof of Theorem~\ref{thm:smallpmono}]
        If $n=1$, there are no nontrivial bump or unwind moves, so the result is immediate. Assume that $n \ge 2$.
        Set $N = (n + 1)^2$. In terms of the counts $C_\gamma^{a, b}$, the probability of $\gamma$ being a geodesic can be decomposed as
        \[
            \PP(\gamma \in \Gamma_n^*) = \sum_{k = 0}^{N}\sum_{a = 0}^{k} p^k (1 - p)^{N - k} C_\gamma^{a, k - a}.
        \]
       If $\hat\gamma$ is obtained from $\gamma$ by one bump or unwind move, Lemma~\ref{lem:trivialcounts} gives
        \begin{equation}
        \label{eq:smallpbound}
        \PP(\hat{\gamma} \in \Gamma_n^*) - \PP(\gamma \in \Gamma^*_n)
        = p^2 (1 - p)^{N - 2} \Delta_{\gamma,\hat\gamma}+R_{\gamma,\hat\gamma}(p),
        \end{equation}
        where
        \[
            \Delta_{\gamma,\hat\gamma}
            =C_{\hat\gamma}^{1,1}-C_\gamma^{1,1} \ge 1,
        \]
        by Lemma~\ref{lem:kmonotone}, and 
        \[
            R_{\gamma,\hat\gamma}(p) = \sum_{k = 3}^{N}\sum_{a = 0}^{k} p^k (1 - p)^{N - k} \sqrbrac{C_{\hat\gamma}^{a, k - a}- C_\gamma^{a, k - a}}
        \]
        is the contribution from configurations with three or more dots. For fixed $k$, the sum $\sum_{a=0}^k C_\gamma^{a,k-a}$ counts a subcollection of the $k$-dot configurations and is therefore at most $\binom{N}{k}$. The remainder term thus satisfies the uniform bound
        \[
            \abs{R_{\gamma,\hat\gamma}(p)}
            \le 2 \sum_{k=3}^N \binom{N}{k} p^k(1-p)^{N-k}
            =2 \PP(\operatorname{Bin}(N,p) \ge 3)
            \le 2 \binom{N}{3} p^3.
        \]
        The last inequality follows by applying Markov's inequality to the third falling factorial of a $\operatorname{Bin}(N,p)$ variable. If
        \[
            0<p\le p_n = \frac{1}{2N} \wedge \frac{1}{8 \binom{N}{3}},
        \]
        then $(1-p)^{N-2} \ge 1- (N - 2) p \ge \frac{1}{2}$, while the remainder is at most $p^2/4$. Hence the right-hand side of \eqref{eq:smallpbound} is strictly positive. This choice also shows explicitly that $p_n$ decays at most polynomially in $n$.
\end{proof}

\appendix
\section{Lower-tail large deviations}
\label{app:lowertailldp}
\begin{lemma}\label{lemma:lower-tail}
Assume that the weight distribution satisfies Assumption~\ref{ass:lower-tail}.
Then for every $\epsilon>0$ and $A>1$, if $n$ is sufficiently large,
\[
    \PP\bigl(L_n<dn(\mu-\epsilon)\bigr)<e^{-An}.
\]
\end{lemma}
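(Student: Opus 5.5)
We plan to prove the lower-tail large deviation bound in the manner of Cox--Durrett/Kesten, adapted to LPP. The lower tail of $L_n$ has speed at least $n^d$ in spirit: to make $L_n$ small, essentially all of a positive fraction of disjoint routes must be small simultaneously. We only need speed $n$ with arbitrarily large constant $A$, so a cruder block argument suffices.

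\emph{Step 1 (block decomposition).} Fix a large integer $K$ and write $n = mK + r$. For each $j$ in a large finite family of disjoint ``parallel'' translates, consider concatenations of independent block passage times. Concretely, choose $M$ disjoint directed routes from $0$ to $n\1$, each built as follows. Start with a short connector from $0$ to a point $z_j$ with $|z_j|\le C M$. Then pass through $m$ consecutive diagonal blocks of side $K$ that are translates of $[0,K]^d$ by $z_j + iK\1$. Finish with a connector to $n\1$. Different $j$ use disjoint boxes; for instance, offset $z_j = j(e_1 - e_2)$ scaled suitably, with blocks $z_j + iK\1+[0,K-1]^d$. In $d\ge2$ these are disjoint for distinct $j$ once offsets exceed $K$.

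Then $L_n \ge \max_j \bigl(\sum_{i} L^{(j)}_i + \text{connector weights}\bigr)$, where the $L^{(j)}_i$ are i.i.d.\ copies of $L_K$. Disjointness makes the $M$ route-sums independent. This gives
\[
\PP(L_n < dn(\mu-\epsilon)) \le \prod_{j=1}^M \PP\bigl(S^{(j)} < dn(\mu-\epsilon)\bigr).
\]

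\emph{Step 2 (single-route bound).} Choose $K$ with $\EE L_K \ge dK(\mu - \epsilon/4)$, which is possible by superadditivity and the definition of $\mu$. Each $S^{(j)}$ is then a sum of $m \approx n/K$ i.i.d.\ variables with mean $\ge dK(\mu-\epsilon/4)$, plus connector weights.

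The summands $L_K$ are bounded below by the weight of any fixed path. Assumption~\ref{ass:lower-tail} therefore gives them a lower tail $\le e^{-ct^\nu}$, with superexponential decay and, in particular, finite negative exponential moments. Cramér's theorem (or a Chernoff bound using $\EE e^{-\theta L_K}<\infty$) then yields
\[
\PP\bigl(\textstyle\sum_i L_i^{(j)} < dmK(\mu-\epsilon/2)\bigr) \le e^{-c_1 n}
\]
for some $c_1=c_1(\epsilon,K)>0$.

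The connectors have total length $O(M K)$, independent of $n$. By Assumption~\ref{ass:lower-tail}, the probability that they contribute less than $-\epsilon dn/4$ is at most $e^{-c n^\nu}$, which is negligible. Hence $\PP(S^{(j)}<dn(\mu-\epsilon)) \le 2e^{-c_1 n}$.

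\emph{Step 3 (amplify).} Multiplying over $j$ gives $\PP(L_n<dn(\mu-\epsilon))\le (2e^{-c_1 n})^M$. We choose $M > A/c_1 + 1$, so the bound is below $e^{-An}$ for large $n$.

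The main obstacle is the geometry in Step~1. The $M$ routes must be genuinely vertex-disjoint, must each be directed from $0$ to $n\1$, and must start and end at the shared points $0$ and $n\1$. I would handle the shared endpoints by dropping $\omega_0,\omega_{n\1}$, which only shifts by two weights, and by using the first and last $O(MK)$ steps as fanning-out and fanning-in connectors. These connectors can be made disjoint apart from the endpoints, which holds in $d\ge2$ since we have two distinct directions to separate along. The $c_1$ in Step~2 depends on $K$ but not on $M$, and this is what allows the amplification in Step~3.
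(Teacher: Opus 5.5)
Your Step~1 product bound needs the $M$ complete routes, connectors included, to be vertex-disjoint apart from $0$ and $n\1$. That is impossible once $M>d$. Every directed path out of $0$ has its second vertex in $\{e_1,\dots,e_d\}$, and every path into $n\1$ has its penultimate vertex in $\{n\1-e_1,\dots,n\1-e_d\}$. By pigeonhole, two of any $d+1$ routes share an interior vertex. Dropping $\omega_0$ and $\omega_{n\1}$ does not get around this, and ``two distinct directions to separate along'' only lets you fan out $d$ routes.

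With $M\le d$ your argument gives at best $\PP(L_n<dn(\mu-\epsilon))\lesssim e^{-dc_1n}$, where $c_1=c_1(\epsilon,K)$ is a fixed Cram\'er rate for the block sums. Nothing makes $dc_1>A$ for arbitrary $A$. Making $c_1$ large by enlarging $K$ would require $K^{-1}\log\PP\bigl(L_K<dK(\mu-\epsilon/4)\bigr)\to-\infty$, which is the statement you are proving. So Step~3 fails as written.

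The missing idea is to stop asking for independence of the connectors. Only the block portions $B_j$ of the routes need to be disjoint. Nonnegative offsets such as $z_j=3jKe_1$ achieve this, since block $i$ of route $j$ lies in the slab $\abs{x_1-x_2-3jK}\le K$. Writing $C_j$ for the total connector weight of route $j$, you have deterministically
\[
\bigl\{L_n<dn(\mu-\epsilon)\bigr\}\subseteq\Bigl\{\min_j C_j<-\tfrac{d\epsilon n}{4}\Bigr\}\cup\bigcap_{j=1}^{M}\Bigl\{B_j<dn\bigl(\mu-\tfrac{3\epsilon}{4}\bigr)\Bigr\}.
\]
The second event has probability $\prod_{j}\PP\bigl(B_j<dn(\mu-\tfrac{3\epsilon}{4})\bigr)\le e^{-Mc_1n}$ by disjointness of the blocks. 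The first is handled by a union bound over the $O(dM^2K)$ connector vertices and Assumption~\ref{ass:lower-tail}, giving $e^{-cn^{\nu}}$ because the connectors have length independent of $n$. It does not matter that connectors overlap one another or cross other routes' blocks.

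This decoupling is exactly how the paper proceeds: it uses independence only for passage times in disjoint tubes, and controls the freely overlapping connectors by a union bound. The paper takes $\asymp n^{q(d-1)}$ tubes. That gives the stronger speed $n^{1+q(d-1)}$, at the cost of connectors of length $n^q$ and the constraint $q<1-1/\nu$. Your repaired version with fixed $M>A/c_1$ needs only bounded connectors and is slightly simpler.

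Two bookkeeping points also need fixing. Consecutive blocks $iK\1+[0,K]^d$ share a corner, so omit the initial vertex of each block (as the paper does with $Z$) to get i.i.d.\ summands without double counting. Also, offsets like $j(e_1-e_2)$ are not reachable from $0$ by directed paths.
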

\begin{proof}
We first establish an exponential lower-tail estimate when paths are restricted
to a fixed-width tube. For $x\in\ZZ^d$ and $m\geq 1$, set
\[
 \mathcal T_R(x,m)
 =\left\{z\in\ZZ^d:x\leq z\leq x+m\1,
   \max_{i,j}\left|(z_i-x_i)-(z_j-x_j)\right|\leq R\right\},
\]
and let $L_{\mathcal T_R}(x,x+m\1)$ be the maximum over directed paths
from $x$ to $x+m\1$ contained in this tube. We claim that, for every
$\eta>0$, there are $R,c>0$ such that, for all sufficiently large $m$,
\begin{equation}\label{eq:fixed-tube-lower-tail}
 \PP\left(L_{\mathcal T_R}(0,m\1)<dm(\mu-\eta)\right)\leq e^{-cm}.
\end{equation}

To prove the claim, choose an integer $\ell$ so large that if
\[
 Z=\max_{\gamma:0\to\ell\1}
       \sum_{z\in\gamma\setminus\{0\}}\omega_z,
\]
then
\[
 \EE Z>d\ell(\mu-\eta/4).
\]
Such an $\ell$ exists because $L_\ell=Z+\omega_0$ and
$\EE L_\ell/(d\ell)\to\mu$. Translate the box $[0,\ell]^d$ by
$i\ell\1$, and denote the corresponding copy of $Z$ by $Z_i$, again
omitting the initial vertex of the block. The variables $Z_i$ are
independent: consecutive boxes meet only at a vertex included in the first
block and omitted from the second. Moreover, $Z_i$ dominates the weight of
any fixed path across its box. Assumption~\ref{ass:lower-tail} therefore
implies that $Z_i$ has a finite negative exponential moment. The exponential
Markov inequality, applied with a sufficiently small parameter, gives
\begin{equation}\label{eq:block-lower-tail}
 \PP\left(\sum_{i=0}^{k-1}Z_i
       <d k\ell(\mu-\eta/2)\right)\leq e^{-c_0k}.
\end{equation}

Write $m=k\ell+r$, where $0\leq r<\ell$, and concatenate optimizing paths
in the $k$ boxes with a fixed directed path across the remaining box. This
concatenated path lies in $\mathcal T_\ell(0,m)$. Apart from the block sums,
its weight consists of at most $d\ell+1$ weights. On the complement of the
event in \eqref{eq:block-lower-tail}, it can have passage time less than
$dm(\mu-\eta)$ only if the sum of these finitely many remaining weights is
less than $-c_1m$. In that case at least one of them is less than $-c_2m$.
Assumption~\ref{ass:lower-tail} bounds this probability by
$C e^{-c_3m^\nu}$. Together with \eqref{eq:block-lower-tail}, this proves
\eqref{eq:fixed-tube-lower-tail} with $R=\ell$.

We now place many disjoint copies of this tube in the box $[0,n]^d$. Apply
\eqref{eq:fixed-tube-lower-tail} with $\eta=\epsilon/4$, and fix the
resulting $R$. Choose
\[
 0<q<1-\frac1\nu,\qquad h=\lfloor n^q\rfloor,
 \qquad m=n-h,
\]
and let $s=2R+1$. For a $(d - 1)$-dimensional lattice
\[
 V_n=(s\ZZ^{d-1})\cap[0,h]^{d-1},
\]
write $a_v=(v_1,\ldots,v_{d-1},0) \in V_n$ for a point on the lattice and $b_v=a_v+m\1$ for its shift in the diagonal direction. The tubes
$\mathcal T_R(a_v,m)$, $v\in V_n$, are pairwise disjoint. Indeed, the
oscillation of the coordinate differences of two distinct vectors $a_v$ is
at least $s>2R$. Consequently, the restricted passage times
\[
 S_v=L_{\mathcal T_R}(a_v,b_v),\qquad v\in V_n,
\]
are independent. Since $|V_n|\geq c_4h^{d-1}$ for all large $n$,
translation invariance and \eqref{eq:fixed-tube-lower-tail} yield
\begin{equation}\label{eq:many-tube-lower-tail}
 \PP\left(S_v<dm(\mu-\epsilon/4)
          \text{ for every }v\in V_n\right)
 \leq \exp\{-c_5n^{1+q(d-1)}\}.
\end{equation}

For each $v$, choose deterministic directed connectors from $0$ to $a_v$
and from $b_v$ to $n\1$. In the first connector sum omit $a_v$, and in the
second omit $b_v$, so that concatenating either connector pair with a path in
the tube counts every vertex exactly once. Each connector contains at most
$dh$ counted vertices. If its weight is less than $-d\epsilon n/4$, then
one of its vertices has weight less than $-\epsilon n/(4h)$. A union
bound over all connector vertices and Assumption~\ref{ass:lower-tail} give
\begin{equation}\label{eq:connector-lower-tail}
 \begin{aligned}
 &\PP\left(\text{some connector has weight less than }
                    -d\epsilon n/4\right)\\
 &\qquad\leq Cn^{qd}\exp\{-c_6n^{\nu(1-q)}\}
 \leq \exp\{-c_7n^{\nu(1-q)}\}.
 \end{aligned}
\end{equation}

Since $h=o(n)$, for all sufficiently large $n$,
\[
 dm(\mu-\epsilon/4)\geq dn(\mu-\epsilon/2).
\]
Thus, if some $S_v$ is at least $dm(\mu-\epsilon/4)$ and neither of its
connectors has weight less than $-d\epsilon n/4$, their concatenation has
weight at least $dn(\mu-\epsilon)$. It follows from
\eqref{eq:many-tube-lower-tail} and \eqref{eq:connector-lower-tail} that
\[
 \PP\bigl(L_n<dn(\mu-\epsilon)\bigr)
 \leq \exp\{-c_5n^{1+q(d-1)}\}
      +\exp\{-c_7n^{\nu(1-q)}\}.
\]
Both exponents are strictly larger than one. The right-hand side is therefore less than $e^{-An}$ for every fixed $A>1$ once $n$ is sufficiently large.
\end{proof}

\section{Probability that the corner path is a geodesic}
\label{app:cornerpathprob}

\begin{proposition}
Assume that the weight distribution has a tiltable time constant. For every $\delta>0$ and all sufficiently large $n$,
\begin{equation}
\PP(L(\cornerpath)= L_n)\ge \exp\bigl(-dn(I(\mu)+\delta)\bigr).
\end{equation}
\end{proposition}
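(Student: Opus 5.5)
We construct an explicit event on which $\cornerpath$ is a geodesic and bound its probability from below. The construction tilts the weights on $\cornerpath$ slightly above $\mu$ and requires that no directed path leaving $\cornerpath$ and rejoining it later collects more weight. Write $\cornerpath$ as a concatenation of $d$ straight segments $\sigma_1,\dots,\sigma_d$, each of length $n$. Fix a small $\eta>0$. By tiltability, $\mu+\eta<\sup_t\Lambda'(t)$ for $\eta$ small, so there is $\theta>0$ with $\Lambda'(\theta)=\mu+\eta$.

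\textbf{Step 1: the weights on $\cornerpath$.} Let $E_1$ be the event that every subsegment $\sigma$ of $\cornerpath$ of length $\ell$ has weight at least $(\mu+\eta/2)\ell - K$, where $K$ is a large constant. Run the exponential tilt with parameter $\theta$ on the $dn+1$ weights of $\cornerpath$. Under the tilted measure, the event "all windows beat $(\mu+\eta/2)\ell-K$" is a random walk with positive drift $\eta/2$ staying above a line, so it has probability bounded below uniformly in $n$ once $K$ is large. Under the same measure, $L(\cornerpath)\le dn(\mu+\eta)+\sqrt n$ with probability at least $1/2$. Changing measure back on the intersection of these events costs at most $e^{-dn(\theta(\mu+\eta)-\Lambda(\theta))-\theta\sqrt n}$. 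This gives $\PP(E_1)\ge c\,e^{-dn I(\mu+\eta)-o(n)}$. By continuity of $I$, $I(\mu+\eta)\le I(\mu)+\delta/2$ for $\eta$ small.

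\textbf{Step 2: no shortcuts.} Any competing path agrees with $\cornerpath$ up to a departure point $u$ and after a return point $v$. In between it uses off-path vertices, and the relevant passage time is $L'(u,v)$, the maximum over such excursions excluding $u$ and $v$. It suffices that $L'(u,v)$ is at most the weight of the $\cornerpath$ segment strictly between $u$ and $v$. Let $E_2$ be the event that $L'(u,v)\le \mu\,\|v-u\|_1+\tfrac{\eta}{4}\|v-u\|_1+K-C$ for all pairs $u<v$ on $\cornerpath$. On $E_1\cap E_2$, each excursion loses to the corresponding segment once $\eta\|v-u\|_1/4\gtrsim$ constants. The finitely many short excursions are handled by also putting, on $E_2$, the bulk weights within bounded distance of $\cornerpath$ below a fixed level. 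That only multiplies the probability by $e^{-O(n)\cdot\text{small}}$. More simply, all off-path weights near the path are required to be below a quantile $q$, at cost $q^{Cn}$ with $q\to1$ as the cutoff grows, which is $e^{-\delta n/4}$.

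The events involving off-path weights are decreasing, and those involving path weights are increasing. They depend on disjoint sets of weights and are therefore independent. By the Harris inequality among the decreasing bulk events,
\[
\PP(E_2)\ge\prod_{u<v}\PP\bigl(L'(u,v)\le(\mu+\eta/4)\|v-u\|_1+K'\bigr).
\]
Each factor is at least $1-e^{-c\|v-u\|_1}$ by the upper-tail large deviations of $L$ above its time constant. Because $u$ and $v$ lie on $\cornerpath$, the displacement $v-u$ may point in an off-diagonal direction, so we use the standard bound $\mu(x)\le\mu\|x\|_1$: the diagonal direction maximizes $\mu$ by concavity and symmetry. This is the delicate point, and I expect it to be the main obstacle. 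The upper-tail estimate must hold uniformly in direction, and the product over the roughly $n^2$ pairs must be controlled. Factors with $\|v-u\|_1\ge C\log n$ contribute at most $n^2e^{-cC\log n}=o(1)$ to $-\log$ of the product. There are $O(n\log^2 n)$ pairs with shorter displacement. For these we enlarge $K'$, and the local quantile cutoff from the start of Step 2 handles them at cost $e^{-\delta n/4}$ after choosing constants. An alternative, if the upper-tail input is uncomfortable in general $d$, is to bound each $L'(u,v)$ by a union over paths using Cramér, which requires $I(\mu+\eta/4)>\log d$. That condition fails in general, so we will rely on subadditive upper large deviations, which hold with exponential moments.

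Combining the steps gives $\PP(\cornerpath\in\Gamma^*_n)\ge\PP(E_1)\PP(E_2)\ge e^{-dn(I(\mu)+\delta)}$ for large $n$.
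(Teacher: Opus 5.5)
Your overall architecture matches the paper's: tilt the path weights slightly above $\mu$, split competitors into excursions, and beat long excursions using a linear gap, upper-tail large deviations and $\mu(x)\le\mu\|x\|_1$. But there is a genuine gap in how you handle short excursions. On $E_1$, the segment strictly between $u$ and $v$ is only guaranteed weight $(\mu+\eta/2)(\|v-u\|_1-1)-K$, and $K$ must be large for $E_1$ to be affordable. Take the excursion that flips a single corner, $\|v-u\|_1=2$. To guarantee it loses, the flipped site must be forced below $\mu+\eta/2-K$. For nonnegative weights such as $\mathrm{Exp}(1)$ this is impossible once $K>\mu+\eta/2$. Capping off-path weights at a quantile $q\to1$, as you suggest, goes in exactly the wrong direction.

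The missing idea is a \emph{lower} bound on the path weights near the corners, paired with an upper bound on the off-path weights there. Fix $b<a$ with $\PP(\omega_x\ge a)>0$ and $\PP(\omega_x\le b)>0$. Require path weights within distance $\ell_0$ of each corner to be at least $a$; this is an increasing event, compatible with $E_1$ under the tilt by Harris. Require off-path weights in the same neighbourhood to be at most $b$. Then every excursion with $\|v-u\|_1<\ell_0$ loses outright.

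This is cheap because two points on the same straight segment of $\cornerpath$ are joined by a unique directed path. So excursions only join different segments, and the short ones all sit near one of the $d-1$ corners. Only $O(\ell_0^d)$ sites are constrained, not $\Theta(n)$. Your count of $O(n\log^2 n)$ short pairs includes same-segment pairs, for which $L'=-\infty$. The paper does exactly this: path weights lie in $[V_1,V_2]$ with $V_1>\mu$, off-path weights lie below $V_1-\eta$, both on $(\log n)^3$-neighbourhoods of the corners, at cost $e^{-O((\log n)^{3d})}$.

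Step 1 also misstates the tilted probability of $E_1$. Requiring every window to have weight at least $(\mu+\eta/2)\ell-K$ says the maximal drawdown of a positive-drift walk over $dn+1$ steps is at most $K$. Disjoint blocks give independent chances to violate this, so the probability is $e^{-c(K)n}$ with $c(K)\to0$ as $K\to\infty$; it is not bounded below uniformly in $n$. You can repair this by taking $K=K(\eta,\delta)$ large. But then you cannot just intersect with $\{L(\cornerpath)\le dn(\mu+\eta)+\sqrt n\}$: that event has probability about $1/2$ and is decreasing, so Harris does not help. Use a linear slack $L(\cornerpath)\le dn(\mu+2\eta)$ instead; its complement has tilted probability decaying at a rate exceeding $c(K)$, and the extra factor $e^{-dn\theta\eta}$ is absorbed into the change of measure. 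The paper avoids this by imposing deficit-free window bounds only on windows longer than $(\log n)^2$, where a Chernoff bound and a union bound give conditional probability $1-o(1)$.
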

We use the notation $\ZZ_{[a,b]}^d := [a,b]^d \cap \ZZ^d$.
\begin{proof}
Because the time constant $\mu$ is tiltable, there exists a tilted measure under which the mean is strictly greater than $\mu$. Consequently, the essential supremum of the distribution is strictly greater than $\mu$ (possibly infinite). We may therefore choose real parameters $V_1, V_2$ and $\eta > 0$ such that $\mu<V_1<V_2$, and the events $\{\omega_x \in [V_1, V_2]\}$ and $\{\omega_x \le V_1 - \eta\}$ both have strictly positive probability.

We take $\eps>0$ small enough to ensure $\mu + 2\epsilon < V_1$. For a directed path $\pi$ from $a$ to $b$, write
\[
    \pi^\circ=\pi\setminus\{a,b\},
    \qquad
    L^\circ(\pi)=\sum_{x\in\pi^\circ}\omega_x.
\]
For $a,b\in\cornerpath$ with $a\le b$, let $\cornerpath[a,b]$ be the segment of $\cornerpath$ from $a$ to $b$, and define
\[
    L_{\cornerpath}^\circ(a,b)=L^\circ(\cornerpath[a,b]),
    \qquad
    L_{\mathrm{off}}^\circ(a,b)
    =\max_{\substack{\pi\in\Gamma(a;b)\\ \pi^\circ\cap\cornerpath=\varnothing}}L^\circ(\pi).
\]
The maximum is understood to be $-\infty$ when there is no admissible path. Thus the two passage times omit the same endpoint weights, and $L_{\mathrm{off}}^\circ(a,b)$ depends only on weights outside $\cornerpath$. For $r>0$, set
\[
    \mathcal N_r=\left\{x\in\ZZ_{[0,n]}^d:
    \exists i\in\{1,\ldots,d\},\ \left\|x-n\sum_{j=i}^d e_j\right\|_\infty\le r\right\}.
\]
We consider the events
\begin{align*}
    \mathcal E_1&:= \left\{\begin{array}{l}
    \forall a,b\in \cornerpath\text{ with $a\le b$ and $\norm{b - a}_\infty > (\log n)^2$},\\
    \hspace{2em}L_{\cornerpath}^\circ(a,b) \geq (\mu+2\eps)\norm{b-a}_1,\\
         \forall x\in\cornerpath\cap\mathcal N_{(\log n)^3},\,\omega_x\in[V_1,V_2]
    \end{array}\right\},\\
         \mathcal E_2&:= \left\{
    \begin{array}{l}
    \forall a,b\in \cornerpath\text{ with $a\le b$},\,
    L_{\mathrm{off}}^\circ(a,b) < (\mu+\eps)\norm{b-a}_1 + (\log n)^2,\\
       \text{and}\quad \forall x\in\mathcal N_{(\log n)^3}\setminus\cornerpath,\,\omega_x\le V_1-\eta
    \end{array}
    \right\},
\end{align*}
We choose $n$ so large that
$$
\epsilon (\log n)^3 > 30d\bigl((\log n)^2+\mu+1\bigr).
$$
We first prove that $\mathcal E_1\cap\mathcal E_2\subset\{L(\cornerpath)=L_n\}$.

Let $\gamma\ne \cornerpath$ be any directed path from $0$ to $(n,\ldots,n)$. Decompose $\gamma$ into maximal excursions $\pi_1,\ldots,\pi_\ell$ whose endpoints $a_j,b_j$ lie in $\cornerpath$ and whose interiors are disjoint from $\cornerpath$. It is enough to show that every excursion has smaller interior weight than the corresponding segment of $\cornerpath$.

Fix an excursion $\pi_j$ and write $a=a_j$ and $b=b_j$. Suppose first that
$$
\norm{b - a}_\infty >(\log n)^3.
$$
Then, by $\mathcal E_2$, the interior of the excursion satisfies
$$
L^\circ(\pi_j) \le L_{\mathrm{off}}^\circ(a,b) < (\mu+\epsilon)\norm{b-a}_1 + (\log n)^2.
$$
Also, since $\norm{b - a}_\infty > (\log n)^3 > (\log n)^2$, the definition of $\mathcal E_1$ gives
$$
L_{\cornerpath}^\circ(a,b) \ge (\mu+2\epsilon)\norm{b-a}_1.
$$
Since $\norm{b -a}_1 \ge (\log n)^3$, we have 
$$
(\mu+2\epsilon)\norm{b - a}_1 > (\mu+\epsilon)\norm{b - a}_1+(\log n)^2.
$$
Thus $L^\circ(\pi_j)<L_{\cornerpath}^\circ(a,b)$.

Now suppose that
$$
\norm{b - a}_\infty \le (\log n)^3.
$$
By the geometry of the corner path, $a$ and $b$ lie on two consecutive sides of $\cornerpath$ and are within distance $(\log n)^3$ of their common corner. Hence every interior vertex of $\pi_j$ lies within distance $(\log n)^3$ of that corner, and every vertex of $\cornerpath[a,b]$ lies within distance $(\log n)^3$ of it.

Therefore, under $\mathcal E_1$, every interior vertex of $\cornerpath[a,b]$ has weight at least $V_1$. Under $\mathcal E_2$, every interior vertex of $\pi_j$ has weight at most $V_1-\eta$. A genuine excursion has $\norm{b-a}_1\ge2$, and both paths have exactly $\norm{b-a}_1-1$ interior vertices. Thus,
$$
L^\circ(\pi_j)
\le (\norm{b-a}_1-1)(V_1-\eta)
< (\norm{b-a}_1-1)V_1
\le L_{\cornerpath}^\circ(a,b).
$$
Replacing each excursion of $\gamma$ by the corresponding subpath of $\cornerpath$ strictly increases the total weight, since the endpoints are shared and their weights cancel. Since $\gamma\ne \cornerpath$ was arbitrary, we obtain $\mathcal E_1\cap\mathcal E_2\subset\{L(\cornerpath)=L_n\}$.

It remains to estimate $\PP(\mathcal E_1\cap\mathcal E_2)$. By the tiltability assumption, we may choose $\epsilon>0$ so small that there is a $t_\epsilon>0$ in the interior of the effective domain of $\Lambda$ satisfying
\[
    \Lambda'(t_\epsilon)=\mu+3\epsilon,
    \qquad
    I(\mu+3\epsilon)+t_\epsilon\epsilon \le I(\mu)+\delta/3.
\]
Let $\mathbb Q_\epsilon$ be the product measure on the weights of $\cornerpath$ obtained by tilting each weight by $t_\epsilon$. Under $\mathbb Q_\epsilon$, the weights have mean $\mu+3\epsilon$. Define
\[
    G_n=\{\omega_x\in[V_1,V_2]\text{ for every }
    x\in\cornerpath\cap\mathcal N_{(\log n)^3}\},
\]
and let $K_n=\abs{\cornerpath\cap\mathcal N_{(\log n)^3}}=O((\log n)^3)$. If
$\mathbb Q_\epsilon(\omega_x\in[V_1,V_2])>0$, then
\[
    \mathbb Q_\epsilon(G_n)=\mathbb Q_\epsilon(\omega_x\in[V_1,V_2])^{K_n}=\exp\{-o(n)\}.
\]

Put $\rho=\min\{\epsilon,V_1-(\mu+2\epsilon)\}>0$. Conditional on $G_n$, the
weights on $\cornerpath$ remain independent. Every unconstrained weight has mean
$\mu+3\epsilon$, while every constrained weight is supported on $[V_1,V_2]$.
Thus every conditional mean is at least $\mu+2\epsilon+\rho$, and the two
conditional laws have a common finite negative exponential moment. For an
interval with $m$ edges, $L_{\cornerpath}^\circ$ contains $m-1$ weights. A
Chernoff bound, with the omitted endpoint absorbed by the gap of order $\rho m$,
therefore gives a constant $c>0$ such that, uniformly for $m>(\log n)^2$,
\[
    \mathbb Q_\epsilon\left(
    L_{\cornerpath}^\circ(a,b)<(\mu+2\epsilon)m\mid G_n\right)
    \le e^{-cm}.
\]
A union bound over the $O(n^2)$ intervals shows that the first condition in
$\mathcal E_1$ holds with conditional probability $1-o(1)$.

Moreover, the conditional mean of $L({\cornerpath})$ is at most
$(dn+1)(\mu+3\epsilon)+O(K_n)$. The two conditional laws also have a common
finite positive exponential moment, so another Chernoff bound gives
\[
    \mathbb Q_\epsilon\left(
    L({\cornerpath})>(dn+1)(\mu+4\epsilon)\mid G_n\right)
    \le e^{-cn}.
\]
Consequently,
\[
    \mathbb Q_\epsilon\left(
    \mathcal E_1\cap\{L({\cornerpath})\le(dn+1)(\mu+4\epsilon)\}\right)
    \ge \exp\{-o(n)\}.
\]
On this event, the Radon--Nikodym derivative of the original measure with
respect to $\mathbb Q_\epsilon$ is at least
\[
    \exp\left\{-(dn+1)
    [I(\mu+3\epsilon)+t_\epsilon\epsilon]\right\}.
\]
It follows from the choice of $\epsilon$ that, for all sufficiently large $n$,
\[
    \PP(\mathcal E_1)\ge \exp\{-d n(I(\mu)+\delta/2)\}.
\]

For $\mathcal E_2$, write
\[
    L^\circ(x,y)=\max_{\pi\in\Gamma(x;y)}L^\circ(\pi).
\]
Then
$L_{\mathrm{off}}^\circ(a,b)\le L^\circ(a,b)$. 
By an upper-tail large deviation bound for last-passage times, for any sufficiently small fixed $t>0$,
\[
    \begin{aligned}
    &\PP\left(L^\circ(a,b)\ge
    (\mu+\epsilon)\norm{b-a}_1+(\log n)^2\right) \leq \exp{(-c(\log n)^2)},
    \end{aligned}
\]
with some $c>0$. Thus a union bound over all $O(n^2)$ pairs gives the first condition in
$\mathcal E_2$ with probability $1-o(1)$. The second condition restricts only
$O((\log n)^{3d})$ sites. Because $\PP(\omega_x \le V_1 - \eta) > 0$, this constraint has probability
$$
\PP(\omega_x \le V_1 - \eta)^{O((\log n)^{3d})} = \exp\{-o(n)\}.
$$
Both conditions are decreasing events in the off-corner weights, so the Harris inequality gives $\PP(\mathcal E_2)\ge \exp\{-o(n)\}$. Finally, because $\mathcal E_1$ and $\mathcal E_2$  are independent, we conclude
$$
\PP(\mathcal E_1\cap\mathcal E_2) = \PP(\mathcal E_1)\PP(\mathcal E_2) \ge \exp\{-d n(I(\mu)+\delta)\}.
$$
\end{proof}

\begin{proposition}
Suppose the weight distribution satisfies Assumptions~\ref{ass:lower-tail} and \ref{ass:tiltable}. For every $\delta > 0$ and all sufficiently large $n$,
\begin{equation}\label{eq:lower-gamma2}
\PP(L(\cornerpath)= L_n)\le \exp\bigl(-dn(I(\mu)-\delta)\bigr).
\end{equation}
\end{proposition}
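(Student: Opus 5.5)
We bound the probability by splitting on whether $L_n$ is unusually small, as in \eqref{eq:cornerunionbound}. Fix $\delta>0$ and a small $\alpha>0$, to be chosen later. Since $\{L(\cornerpath)=L_n\}\subseteq\{L(\cornerpath)\ge L_n\}$, we have
\[
\PP(L(\cornerpath)=L_n)\le \PP\bigl(L(\cornerpath)\ge dn(\mu-\alpha)\bigr)+\PP\bigl(L_n<dn(\mu-\alpha)\bigr).
\]
Lemma~\ref{lemma:lower-tail}, which uses Assumption~\ref{ass:lower-tail}, applies with $\epsilon=\alpha$ and $A=d(I(\mu)+1)$. It gives $\PP(L_n<dn(\mu-\alpha))\le e^{-dn(I(\mu)+1)}$ for large $n$, which is negligible against the claimed bound.

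For the first term, $L(\cornerpath)$ is a sum of $dn+1$ i.i.d.\ weights. The exponential Chebyshev inequality with the tilt parameter $\theta_\alpha>0$ from the proof of Theorem~\ref{thm:corners} gives
\[
\PP\bigl(L(\cornerpath)\ge dn(\mu-\alpha)\bigr)\le e^{-\theta_\alpha dn(\mu-\alpha)+(dn+1)\Lambda(\theta_\alpha)}=e^{\Lambda(\theta_\alpha)}e^{-dnI(\mu-\alpha)}.
\]
By Assumption~\ref{ass:tiltable}, $\theta_\alpha$ exists, so that $\Lambda'(\theta_\alpha)=\mu-\alpha$. The factor $e^{\Lambda(\theta_\alpha)}$ is a constant independent of $n$. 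Alternatively, the upper half of Cramér's theorem gives $e^{-dnI(\mu-\alpha)+o(n)}$ directly.

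It remains to choose $\alpha$. The rate function $I$ is convex and finite near $\mu$, because $\mu$ lies in the interior of the range of $\Lambda'$ on $t\ge0$. Hence $I$ is continuous at $\mu$, and we can choose $\alpha$ with $I(\mu-\alpha)\ge I(\mu)-\delta/2$. Combining the two terms gives
\[
\PP(L(\cornerpath)=L_n)\le e^{\Lambda(\theta_\alpha)}e^{-dn(I(\mu)-\delta/2)}+e^{-dn(I(\mu)+1)}\le e^{-dn(I(\mu)-\delta)}
\]
for $n$ large.

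The main point needing care is continuity of $I$ at $\mu$ from the left, which is what tiltability provides. The argument does not use the specific shape of $\cornerpath$, so the same bound holds for every path.
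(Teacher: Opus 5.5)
Your proof is correct and follows the paper's route. You split on the lower tail of the passage time, which Lemma~\ref{lemma:lower-tail} makes negligible, bound $\PP(L(\cornerpath)\ge dn(\mu-\alpha))$ by a Chernoff/Cram\'er estimate, and choose $\alpha$ by continuity of $I$ at $\mu$. The paper detours through the auxiliary time $\widehat L_n = L(e_1, n\1 - e_d) + \omega_0 + \omega_{n\1} \le L_n$, whereas you apply Lemma~\ref{lemma:lower-tail} to $L_n$ directly; this is a harmless simplification. Your $\theta_\alpha>0$ implicitly uses $\mu>\EE\omega_x$, which the paper secures by first disposing of the trivial case $I(\mu)\le\delta$.
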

\begin{proof}
If $I(\mu)\le\delta$, then the right-hand side of \eqref{eq:lower-gamma2} is at least one and the result is immediate. We may therefore assume that $I(\mu)>\delta$. Since the tiltable distribution is nondegenerate and $I$ vanishes only at $\EE\omega_x$, it follows that $\mu>\EE\omega_x$. By the left-continuity of $I$ at $\mu$, we can choose $\epsilon>0$ such that 
\[
    \epsilon<\mu-\EE\omega_x,
    \qquad
    I(\mu-\epsilon)\ge I(\mu)-\delta/4.
\]

We define
\[
    \widehat L_n=L(e_1,n\1-e_d)+\omega_0+\omega_{n\1}.
\]
An optimizing path for the bulk last-passage time can be extended by the two connector edges $0\to e_1$ and $n\1-e_d\to n\1$, so deterministically $L_n\ge\widehat L_n$. Set
\[
    A_0=2d\bigl(I(\mu)+1\bigr),
    \qquad z_n=e_1+(n-1)\1.
\]
Fix a directed path $\sigma_n$ from $z_n$ to $n\1-e_d$ and set
\[
    R_n=\sum_{x\in\sigma_n\setminus\{z_n\}}\omega_x.
\]
 Concatenating
$\sigma_n$ with an optimizing path to $z_n$ gives deterministically
\[
    L(e_1,n\1-e_d)\ge L(e_1,z_n)+R_n.
\]
Moreover, for all sufficiently large $n$,
\[
    d(n-1)(\mu-\epsilon/4)-dn\epsilon/8
    \ge dn(\mu-\epsilon/2).
\]
 Lemma~\ref{lemma:lower-tail}, applied with $A=A_0$ and $\epsilon/4$ at scale $n-1$, therefore gives
\begin{align*}
    &\PP\bigl(L(e_1,n\1-e_d)<dn(\mu-\epsilon/2)\bigr)\\
    &\quad\le \PP\bigl(L(e_1,z_n)<d(n-1)(\mu-\epsilon/4)\bigr)
       +\PP(R_n<-dn\epsilon/8)\\
    &\quad\le e^{-A_0(n-1)}+Ce^{-cn^\nu}.
\end{align*}
Assumption~\ref{ass:lower-tail} and a union bound also give
\[
    \PP\bigl(\omega_0+\omega_{n\1}<-dn\epsilon/2\bigr)
    \le 2\PP\bigl(\omega_0<-dn\epsilon/4\bigr)
    \le 2e^{-c n^\nu}.
\]
Consequently, for all sufficiently large $n$,
\[
    \PP\bigl(\widehat L_n<dn(\mu-\epsilon)\bigr)
    \le \exp\{-dn(I(\mu)+1/2)\}.
\]

Moreover,
\[
    \{L(\cornerpath)=L_n\}\cap
    \set{\widehat L_n\ge dn(\mu-\epsilon)}
    \subseteq \set{L(\cornerpath)\ge dn(\mu-\epsilon)}.
\]
Because $\cornerpath$ consists of a fixed collection of $dn+1$ vertices and $\mu-\epsilon>\EE\omega_x$, the standard large deviation upper bound \cite{DZ:ldp} gives
\[
    \mathbb P\left(L(\cornerpath) \ge dn(\mu-\epsilon)\right) \le \exp\bigl(-dn I(\mu-\epsilon) + o(n)\bigr).
\]
Our choice of $\epsilon$ allows us to absorb the $o(n)$ term and obtain, for all sufficiently large $n$,
\[
    \mathbb P\left(L(\cornerpath) \ge dn(\mu-\epsilon)\right)
    \le \exp\{-dn(I(\mu)-\delta/2)\}.
\]
Combining these bounds via a union bound yields
\begin{align*}
    \PP(L(\cornerpath)= L_n) &\leq \PP\bigl(\widehat L_n < dn(\mu-\epsilon) \bigr) + \PP(L(\cornerpath) \ge dn(\mu-\epsilon)) \\
    &\le \exp\{-dn(I(\mu)+1/2)\}+\exp\{-dn(I(\mu)-\delta/2)\}\\
    &\le \exp\{-dn(I(\mu)-\delta)\},
\end{align*}
where the last inequality holds for sufficiently large $n$ because $dn\delta/2\ge\log 2$.
\end{proof}

\section{Moderate deviations of transversal fluctuations}
\label{app:transflucts}
One can use the moderate deviations of passage times to bound the probability of atypically large transversal fluctuations of the corresponding geodesics. This has been observed, for example, in \cite{J:transflucts,BB:smalltransdeviations}. 

The bounds we need to prove Corollary~\ref{cor:staircases} do not seem to be stated explicitly in the literature, so we give the argument here. Showing that the obtained bounds are of optimal order is another question, which thankfully is not important for us.

Given a path $\gamma \in \Gamma_n$, let $Q(\gamma) = \max \set{\abs{x - y} : (x, y) \in \gamma}$ be the radius (up to a factor of $\sqrt{2}$) of the minimal tube around the straight line $(0, 0) \to (n, n)$ containing $\gamma$. Recall that $\gamma^*_n$ is the path attaining the passage time $L(0, 0; n, n)$. Define also $\gamma^{\brtriangle}_n$ to be the path attaining the half-space time $L^{\brtriangle}(0, 0; n, n)$.

\begin{proposition}
\label{prop:moderatetransflucts}
Consider $d = 2$ with $\omega_x \sim \mathrm{Exp}(1)$. Take $\alpha \in (\frac{2}{3}, 1]$. There is $c > 0$ such that
\[
\PP(Q(\gamma_n^*) \ge n^{\alpha}) \le e^{-c n^{3 \alpha - 2}}.
\]
The same is true of $Q(\gamma^{\brtriangle}_n)$.
\begin{proof}
    For simplicity we treat only the point-to-point geodesic $\gamma^*_n$, but the proof goes through essentially unchanged for $\gamma^{\brtriangle}_n$ (and for point-to-line geodesics, should we need them). The case $\alpha = 1$ has been treated in Appendix~\ref{app:cornerpathprob}, so assume $\alpha < 1$. We abbreviate taking integer parts. 
    
    For $t$ such that $n^{\alpha-1}\le t\le1-n^{\alpha-1}$, put $z_t=(tn+n^\alpha,tn-n^\alpha)$ and let $A_t$ be the event that $z_t\in\gamma_n^*$. The event that the geodesic exits the other side of the tube may be handled in the same way. On the event $A_t$ we have
    \[
    L(0, 0; n, n) = L(0, 0; z_{t,1},z_{t,2}) + L(z_{t,1},z_{t,2}; n, n)-\omega_{z_t}.
    \]
    Let $L_0(a, b; c, d) = L(a, b; c, d) - \mu(c - a, d - b)$ be centred passage times. A second-order Taylor expansion of the shape function in the transverse direction gives
    \[
        \mu(n, n) - \mu(t n + n^{\alpha}, t n - n^{\alpha}) - \mu\bigl((1 - t) n - n^{\alpha}, (1 - t) n + n^{\alpha})\bigr) \ge \frac{1}{2t(1 - t)}n^{2\alpha - 1},
    \]
    for $n$ large enough. Since $\omega_{z_t}\ge0$, dropping the resulting nonpositive term gives the inequality
    \begin{multline}
    \label{eq:transfluctsineq}
        L_0(0, 0; n, n) \le L_0(0, 0; t n + n^{\alpha}, t n - n^{\alpha}) \\ + L_0(t n + n^{\alpha}, t n - n^{\alpha}; n, n) -\frac{1}{2t(1 - t)}n^{2\alpha - 1}.
    \end{multline}
    Set
    \begin{align*}
        L_0^1 &= L_0(0, 0; n, n)\\
        L_0^2 &= L_0(0, 0; t n + n^{\alpha}, t n - n^{\alpha})\\
        L_0^3 &= L_0(t n + n^{\alpha}, t n - n^{\alpha}; n, n).
    \end{align*}
    Observe that $L_0^3 \disteq L_0(0, 0; (1 - t) n + n^{\alpha}, (1 - t) n - n^{\alpha})$. As in the proof of the upper bound for Theorem~\ref{thm:probability}, we apply a union bound to the event in \eqref{eq:transfluctsineq}. For each summand, the Harris inequality bounds the intersection of the decreasing event for $L_0^1$ and the increasing event for $L_0^2$ or $L_0^3$ by the product of their probabilities. Using also $t(1 - t) \le \frac{1}{4}$, we obtain
    \begin{equation}
        \label{eq:transfluctsunion}
        \PP(A_t) \le \sum_{a = - \infty}^{\infty} \PP(L_0^1 \le a + 1)\brac[\Big]{\PP\brac[\big]{2 L_0^2 \ge a + n^{2 \alpha - 1}} + \PP\brac[\big]{2 L_0^3 \ge a + n^{2 \alpha - 1}}}.
    \end{equation}
    Larger values of $a$ relax the condition on $L_0^1$ while forcing $L^2_0$ and $L^3_0$ further into their upper-tail, while smaller values push $L_0^1$ into its lower-tail and increase the probabilities for the remaining times. 
    
    The passage times are asymptotically Tracy-Widom GUE variables, and as such they have asymmetric tails. The estimates in \cite{LR:betadeviations} tell us that the upper-tail decays as $\exp(- c t^{3/2}/\sqrt{n})$, whereas the lower-tails have the much harsher $\exp( - c t^3 / n)$ decay. We find that the main contribution to the sum will happen at $a \sim - n^{\alpha / 2}$, and that the leading cost is on the scale $\exp(-c n^{3 \alpha - 2})$. Moreover, the contribution of $L^2_0$ dominates if $t \ge \frac{1}{2}$, and otherwise $L^3_0$ dominates. An appeal to Laplace's method shows
    \[
        \PP(A_t) \le e^{- c_t n^{3 \alpha - 2}}.
    \]
    The dependence on $t$ in the constant $c_t$ is through the scaling needed to bring the passage times to their Tracy-Widom approximations. A more careful accounting shows that it may be taken uniform in $t$.

    To complete the proof, we should bound the probability that the geodesic leaves the tube at \emph{some} $t$, rather than a particular $t$. But there are only $n$ possible choices of $t$, so a union bound  on $\bigcup A_t$ quickly gives the desired estimate.
\end{proof}
\end{proposition}

An immediate application of this bound is the fact that with very large probability, two geodesics of length $n$ with well-separated endpoints will not meet. We have only proved the fluctuation bound for diagonal geodesics, but this is all we will make use of.
\begin{corollary}
\label{cor:nonintersect}
Take $\alpha \in (\frac{2}{3}, 1]$ and fix $\theta\in(0,1]$. Consider points $u,u'\in\ZZ^2$ such that
\[
    \norm{u - u'}_1\ge n^\alpha,
    \qquad
    \bigl|\langle u - u',e_1 - e_2\rangle\bigr|
    \ge \theta\norm{u - u'}_1.
\]
Thus the displacement of the two starting points has a component transverse to the diagonal direction $e_1 + e_2$ of order at least $n^\alpha$. Take $a,a'\in\bbrac{1, n}$, and let $\gamma,\gamma'$ be the LPP geodesics joining $u\to u+a \1  $ and $u'\to u' + a' \1  $, respectively. Then there is $c>0$ such that
\[
    \PP(\gamma \cap \gamma' \ne \emptyset) \le e^{- c n^{3 \alpha - 2}}.
\]
\end{corollary}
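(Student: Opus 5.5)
The plan is to reduce non-intersection to the transversal fluctuation bound of Proposition~\ref{prop:moderatetransflucts}, applied separately to each geodesic after translating its starting point to the origin. Write $D = \norm{u - u'}_1 \ge n^\alpha$. By the transversality hypothesis, the two starting points have antidiagonal coordinates $\langle u, e_1 - e_2\rangle$ and $\langle u', e_1 - e_2\rangle$ differing by at least $\theta D$.

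Every point of $\gamma$ is of the form $u + z$ with $z \in \gamma_a$, where $\gamma_a$ is a diagonal geodesic of length $a$ started at the origin. Its antidiagonal coordinate therefore lies within $Q(\gamma_a)$ of $\langle u, e_1 - e_2\rangle$, and the same holds for $\gamma'$. So if both $Q(\gamma) < \theta D/2$ and $Q(\gamma') < \theta D/2$, the two paths sit in disjoint strips parallel to the diagonal and cannot meet. A union bound gives
\[
\PP(\gamma \cap \gamma' \ne \emptyset) \le \PP\bigl(Q(\gamma_a) \ge \theta D/2\bigr) + \PP\bigl(Q(\gamma_{a'}) \ge \theta D/2\bigr).
\]

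It remains to bound $\PP(Q(\gamma_a) \ge \theta n^\alpha/2)$ uniformly in $a \in \bbrac{1,n}$. I split on the size of $a$.

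If $a < \theta n^\alpha/2$, the event is empty, since $Q(\gamma_a) \le a$ trivially.

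Otherwise $a \le n$, and I write $\theta n^\alpha/2 = a^{\beta}$ with $\beta = \log(\theta n^\alpha/2)/\log a \ge \alpha - o(1)$.
\begin{itemize}
\item If $\beta \le 1$, Proposition~\ref{prop:moderatetransflucts} at scale $a$ gives the bound $e^{-c a^{3\beta-2}} = e^{-c (\theta n^\alpha/2)^3/a^2}$. Since $a \le n$, this is at most $e^{-c' \theta^3 n^{3\alpha-2}}$.
\item To use the proposition we need $\beta > 2/3$ with a uniform constant $c$. This holds because $\beta \ge \alpha - o(1) > 2/3$ for large $n$. I will need to note that the constant in Proposition~\ref{prop:moderatetransflucts} can be taken uniform for exponents in a compact subinterval of $(2/3,1]$; its proof via Laplace's method suggests this.
\item If $\beta > 1$, the event is again empty.
\end{itemize}
Small $n$ is absorbed into the constant.

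The main obstacle is exactly this uniformity: the proposition is stated for a fixed exponent $\alpha$, but here the effective exponent $\beta$ depends on $a$ and $n$. If the uniformity cannot be extracted cleanly, the fallback is monotonicity. Replacing the threshold $a^\beta$ by the smaller $a^{\alpha'}$, with a fixed $\alpha' \in (2/3,\alpha)$, only increases the probability. This gives a bound with the fixed exponent $3\alpha'-2$ when $a$ is comparable to $n$, and one then handles the range $a \le n^{1-\eta}$ separately by fixing $\alpha'$ near $1$. Either route yields $e^{-cn^{3\alpha-2}}$ after adjusting $c$, with some care needed in matching exponents.
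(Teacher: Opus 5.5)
Your main route is essentially the paper's proof. The paper sets $R=\frac12\lvert\langle u-u',e_1-e_2\rangle\rvert$, notes that an intersection forces $Q(\gamma-u)\ge R$ or $Q(\gamma'-u')\ge R$, and applies Proposition~\ref{prop:moderatetransflucts} directly in the form $Ce^{-cR^3/a^2}$ with $a,a'\le n$; this tacitly uses exactly the uniformity in the effective exponent that you flag. Your fallback, however, would not rescue the argument if that uniformity were unavailable: for $a$ comparable to $n$, a fixed $\alpha'<\alpha$ only gives $e^{-cn^{3\alpha'-2}}$, a strictly smaller power of $n$ that no adjustment of $c$ can upgrade to $e^{-cn^{3\alpha-2}}$, so the uniform (scale-free) form of the proposition is genuinely needed.
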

\begin{proof}
Let $R=\frac{1}{2}|\langle u-u',e_1-e_2\rangle|$. If $\gamma$ and $\gamma'$ meet, the triangle inequality implies that $Q(\gamma - a) \ge R$ or $Q(\gamma' - a') \ge R$. Since $R \ge \frac{\theta}{2} n^\alpha$ and $a, a' \le n$, Proposition~\ref{prop:moderatetransflucts} therefore gives
\[
    \PP(\gamma\cap\gamma'\ne\emptyset)
    \le C e^{-cR^3/a^2}+C e^{-cR^3/(a')^2}
    \le e^{-c n^{3\alpha-2}}.
\]
\end{proof}

One can imagine that passage times are independent when their geodesics do not intersect, and morally this is correct. We record the precise statement in the next proposition.
\begin{proposition}
\label{prop:highprobindependence}
    Carry over the notation from Corollary~\ref{cor:nonintersect}. Write $L = L(u; u + a \1)$ and $L' = L(u'; u' + a' \1)$. Take events $\mathcal{E} \in \sigma(L)$ and $\mathcal{E}' \in \sigma(L')$. There is a constant $c > 0$, uniform in $n$ and across all choices of events $\mathcal{E},\, \mathcal{E}'$, such that
    \[
    \abs[\big]{\PP(\mathcal{E} \cap \mathcal{E}') - \PP(\mathcal{E})\PP(\mathcal{E}')} \le e^{- c n^{3 \alpha - 2}}.
    \]
    That is, the events may be considered as independent, up to an error of $\exp\brac{- c n^{3 \alpha - 2}}$.
\begin{proof}
    Write $\mathcal{T}$ for the tube of width $\lambda n^{\alpha}$ around the line joining $u \to u + a \1$, and $\mathcal{T}'$ for the tube of width $\lambda n^{\alpha}$ around the line $u' \to u' + a' \1$. Choose $\lambda > 0$ small enough that $\mathcal{T} \cap \mathcal{T}' = \emptyset$, which may be done uniformly in the angle $\theta$. Define coupled weights $\tilde{\omega}$ which are equal to $\omega$ on $\mathcal{T}$ and sampled independently elsewhere. Do the same with weights $\tilde{\omega}'$ and $\mathcal{T}'$. Observe that the weights $\tilde{\omega}$ and $\tilde{\omega}'$ are completely independent of one another.
    
    Sample $L = L(u; u + a \1)$ and $L' = L(u'; u' + a' \1)$ according to the usual weights $\omega$. Sample $M$ and $M'$ as passage times between the same points but under weights $\tilde{\omega}$ and $\tilde{\omega}'$, respectively. This choice of $M$ and $M'$ ensures they are independent by construction, and they are of course equal in distribution to their counterparts under weights $\omega$. Moreover, we will have $L = M$ so long as the geodesics defining both remain in $\mathcal{T}$, and the same for $L',\, M'$. Write $I$ for the event that none of the four geodesics leave their corresponding tube and note that $L = M$ and $L' = M'$ conditioned on $I$.
    
    Returning to the claim, choose sets $A,\, A' \subseteq \RR$ such that $\mathcal{E} = \set{L \in A}$ and $\mathcal{E}' = \set{L' \in A'}$. We can carry out the following manipulation:
    \begin{align*}
        \PP(\mathcal{E} \cap \mathcal{E}') &\le \PP(\mathcal{E} \cap \mathcal{E}' \mid I)\PP(I) + \PP(I^c)\\
                       &= \PP(L \in A,\, L' \in A' \mid I)\PP(I) + \PP(I^c)\\
                       &= \PP(M \in A,\, M' \in A' \mid I)\PP(I) + \PP(I^c)\\
                       &= \PP(M \in A,\, M' \in A',\, I) + \PP(I^c)\\
                       &\le \PP(M \in A,\, M' \in A') + \PP(I^c)\\
                       &= \PP(\mathcal{E}) \PP(\mathcal{E}') + \PP(I^c).
    \end{align*}
    We may estimate $\PP(I^c)$ using Proposition~\ref{cor:nonintersect} and a union bound, and get
    \begin{align*}
        \PP(\mathcal{E} \cap \mathcal{E}') &\le \PP(\mathcal{E}) \PP(\mathcal{E}') + 4 e^{- c n^{3 \alpha - 2}}.
    \end{align*}
    Reversing the roles of the $L$ and $M$ variables gives the inequality in the other direction.
\end{proof}
\end{proposition}
\printbibliography
\end{document}

%% file: fig_bump.tex
\begin{tikzpicture}[scale=0.9, >=stealth, very thick]
    \colorlet{pathblue}{blue!50}
    
    \begin{scope}[xshift=0cm]
        \draw[step=1cm, gray!30, thin] (0,0) grid (6,6);
        \draw[dashed, gray] (0,0) -- (6,6); 

        \draw[->, pathblue] (0,0) -- (0,3) node[midway, left, text=black] {$a_1$};
        \draw[->, pathblue] (0,3) -- (2,3) node[midway, above, text=black] {$a_2$};
        \draw[->, pathblue] (2,3) -- (2,4) node[midway, left, text=black] {$a_3$};
        \draw[->, pathblue] (2,4) -- (5,4) node[midway, above, text=black] {$a_4$};
        \draw[->, pathblue] (5,4) -- (5,6) node[midway, left, text=black] {$a_5$};
        \draw[->, pathblue] (5,6) -- (6,6) node[midway, above, text=black] {$a_6$};

        \fill (0,0) circle (2pt); \fill (6,6) circle (2pt);
        \fill (0,3) circle (2pt); \fill (2,3) circle (2pt);
        \fill (2,4) circle (2pt); \fill (5,4) circle (2pt); \fill (5,6) circle (2pt);
        
        \draw[red, thick] (2,3) circle (4pt) node[below right, text=black] {$k$};
    \end{scope}

    \draw[->, ultra thick, black] (6.8, 3) -- (8.2, 3) node[midway, above] {Bump};

    \begin{scope}[xshift=9cm]
        \draw[step=1cm, gray!30, thin] (0,0) grid (6,6);
        \draw[dashed, gray] (0,0) -- (6,6); 

        \draw[->, pathblue] (0,0) -- (0,4) node[midway, left, text=black] {$a_1 + a_3$};
        \draw[->, pathblue] (0,4) -- (5,4) node[midway, above, text=black] {$a_2 + a_4$};
        \draw[->, pathblue] (5,4) -- (5,6) node[midway, left, text=black] {$a_5$};
        \draw[->, pathblue] (5,6) -- (6,6) node[midway, above, text=black] {$a_6$};

        \fill (0,0) circle (2pt); \fill (6,6) circle (2pt);
        \fill (0,4) circle (2pt); \fill (5,4) circle (2pt); \fill (5,6) circle (2pt);
    \end{scope}
\end{tikzpicture}

%% file: fig_unwind.tex
\begin{tikzpicture}[scale=0.9, >=stealth, very thick]
    \colorlet{pathblue}{blue!50}
    
    \begin{scope}[xshift=0cm]
        \draw[step=1cm, gray!30, thin] (0,0) grid (7,7);
        \draw[dashed, gray] (0,0) -- (7,7); 
    
        \draw[->, pathblue] (0,0) -- (0,2) node[midway, left, text=black] {$a_1$};
        \draw[->, pathblue] (0,2) -- (5,2) node[midway, above, text=black] {$a_2$};
        \draw[->, pathblue] (5,2) -- (5,4) node[midway, right, text=black] {$a_3$};
        \draw[->, pathblue] (5,4) -- (6,4) node[midway, below, text=black] {$a_4$};
        \draw[->, pathblue] (6,4) -- (6,7) node[midway, right, text=black] {$a_5$};
        \draw[->, pathblue] (6,7) -- (7,7) node[midway, above, text=black] {$a_6$};

        \fill (0,0) circle (2pt); \fill (7,7) circle (2pt);
        \fill (0,2) circle (2pt); 
        \fill (5,2) circle (2pt); 
        \fill (5,4) circle (2pt); 
        \fill (6,4) circle (2pt); 
        \fill (6,7) circle (2pt);
        
        \draw[red, thick] (0,2) circle (4pt) node[above left, text=black] {$l$};
        \draw[red, thick] (6,7) circle (4pt) node[above left, text=black] {$k$};
    \end{scope}

    \draw[->, ultra thick, black] (7.8, 3.5) -- (9.2, 3.5) node[midway, above] {Unwind};

    \begin{scope}[xshift=10cm]
        \draw[step=1cm, gray!30, thin] (0,0) grid (7,7);
        \draw[dashed, gray] (0,0) -- (7,7); 

        \draw[->, pathblue] (0,0) -- (0,5) node[midway, left, text=black] {$a_1 + a_5$};
        \draw[->, pathblue] (0,5) -- (1,5) node[midway, below, text=black] {$a_4$};
        \draw[->, pathblue] (1,5) -- (1,7) node[midway, right, text=black] {$a_3$};
        \draw[->, pathblue] (1,7) -- (7,7) node[midway, above, text=black] {$a_6 + a_2$};

        \fill (0,0) circle (2pt); \fill (7,7) circle (2pt);
        \fill (0,5) circle (2pt); 
        \fill (1,5) circle (2pt); 
        \fill (1,7) circle (2pt);
    \end{scope}

\end{tikzpicture}

%% file: fig_staircase.tex
\begin{tikzpicture}[scale=0.8]
    \def\m{4}
    \def\k{3}
    \def\n{12}
    \def\halfm{2} 

    \draw[gray!70, dotted, thick] (0,0) -- (\n,\n);

    \draw[magenta!60, thick] (0,0) 
        -- (1.5, 0.8) -- (2.8, 1.5) -- (4.5, 3.0) -- (5.8, 4.2) 
        -- (7.5, 6.0) -- (8.8, 7.5) -- (10.2, 9.0) -- (11.2, 10.5) 
        -- (\n,\n);

    \foreach \j in {0,1,2} {
        \pgfmathsetmacro{\cx}{\j*\m}
        \pgfmathsetmacro{\cy}{(\j+1)*\m}
        
        \fill[gray!20] (\cx, \cy) -- (\cx, \cy-\halfm) 
            -- (\cx+0.2, \cy-2.0) -- (\cx+0.8, \cy-1.1) 
            -- (\cx+1.1, \cy-0.8) -- (\cx+1.5, \cy-0.5) 
            -- (\cx+\halfm, \cy) -- cycle;
        
        \draw[cyan!60, thick] (\cx, \cy-\halfm) 
            -- (\cx+0.2, \cy-2.0) -- (\cx+0.8, \cy-1.1) 
            -- (\cx+1.1, \cy-0.8) -- (\cx+1.5, \cy-0.5) 
            -- (\cx+\halfm, \cy);
        
        \draw[cyan!60, thick] (\cx, \cy-2.6) 
            -- (\cx+0.2, \cy-2.1) -- (\cx+0.9, \cy-1.6) 
            -- (\cx+1.6, \cy-0.9) -- (\cx+2.1, \cy-0.2) 
            -- (\cx+2.6, \cy);
            
        \draw[cyan!60, thick] (\cx, \cy-3.2) 
            -- (\cx+0.3, \cy-2.6) -- (\cx+1.1, \cy-1.9) 
            -- (\cx+1.9, \cy-1.1) -- (\cx+2.6, \cy-0.3) 
            -- (\cx+3.2, \cy);
        
        \draw[magenta!60, thick] (\cx, \cy-\m) 
            -- (\cx+0.4, \cy-3.2) -- (\cx+1.2, \cy-2.5) 
            -- (\cx+2.5, \cy-1.2) -- (\cx+3.2, \cy-0.4) 
            -- (\cx+\m, \cy);
    }

    \foreach \j in {1,2} {
        \pgfmathsetmacro{\cx}{\j*\m}
        \pgfmathsetmacro{\cy}{\j*\m}
        
        \fill[gray!20] (\cx, \cy) -- (\cx-\halfm, \cy) 
            -- (\cx-1.9, \cy+0.2) -- (\cx-1.1, \cy+0.8) 
            -- (\cx-0.8, \cy+1.1) -- (\cx-0.4, \cy+1.5) 
            -- (\cx, \cy+\halfm) -- cycle;
        
        \draw[cyan!60, thick] (\cx-\halfm, \cy) 
            -- (\cx-1.9, \cy+0.2) -- (\cx-1.1, \cy+0.8) 
            -- (\cx-0.8, \cy+1.1) -- (\cx-0.4, \cy+1.5) 
            -- (\cx, \cy+\halfm);
        
        \draw[cyan!60, thick] (\cx-2.6, \cy) 
            -- (\cx-2.1, \cy+0.2) -- (\cx-1.6, \cy+0.9) 
            -- (\cx-0.9, \cy+1.6) -- (\cx-0.2, \cy+2.1) 
            -- (\cx, \cy+2.6);
            
        \draw[cyan!60, thick] (\cx-3.2, \cy) 
            -- (\cx-2.6, \cy+0.3) -- (\cx-1.9, \cy+1.1) 
            -- (\cx-1.1, \cy+1.9) -- (\cx-0.3, \cy+2.6) 
            -- (\cx, \cy+3.2);
    }

    \draw[black, very thick] (0,0)
    \foreach \j in {0,1,2} {
        -- (\j*\m, \j*\m+\m) -- (\j*\m+\m, \j*\m+\m)
    };

    \draw[gray!80, thick] (0,0) -- (\n+1.5, 0);
    \draw[gray!80, thick] (0,0) -- (0, \n+1.5);

    \fill[black] (0,0) circle (2.5pt) node[below left] {$(0,0)$};
    \fill[black] (\n,\n) circle (2.5pt) node[above right] {$(n,n)$};

\end{tikzpicture}

%% file: fig_staircase_notation.tex
\begin{tikzpicture}[scale=1.2, every node/.style={font=\large}]

    \draw[gray!70, dotted, thick] (-2,-2) -- (9,9);

    \draw[black, very thick] (0,0) -- (0,4) -- (4,4) -- (4,8);

    \draw[black, very thick] (-1.5, 0) -- (0,0);
    \fill[black] (-1.0, 0) circle (1.2pt);
    \fill[black] (-0.5, 0) circle (1.2pt);
    \node[left] at (-1.5, 0) {\dots};
    
    \draw[black, very thick] (4,8) -- (6.5, 8);
    \fill[black] (4.5, 8) circle (1.2pt);
    \fill[black] (5.0, 8) circle (1.2pt);
    \fill[black] (5.5, 8) circle (1.2pt);
    \fill[black] (6.0, 8) circle (1.2pt);
    \node[right] at (6.5, 8) {\dots};

    \foreach \y in {0.0, 0.5, 1.0, 1.5, 2.0, 3.0, 3.5, 4.0} { \fill[black] (0, \y) circle (1.2pt); }
    \foreach \x in {0.0, 0.5, 1.0, 2.0, 3.0, 3.5, 4.0} { \fill[black] (\x, 4) circle (1.2pt); }
    \foreach \y in {4.0, 4.5, 5.0, 6.0, 6.5, 7.0, 7.5, 8.0} { \fill[black] (4, \y) circle (1.2pt); }

    
    \fill[black] (0, 2.5) circle (1.5pt) node[left=6pt] {$u^i_j$};
    \fill[black] (0, 4) circle (1.5pt) node[above left=6pt] {$u^Q_j = v^Q_j$};
    \fill[black] (1.5, 4) circle (1.5pt) node[above=6pt] {$v^i_j$};

    \draw[cyan!80!blue, line width=3.5pt, rounded corners=2pt] 
        ([xshift=-4.5pt]0, 2.5) -- node[left, black] {$D^i_j$} ([xshift=-4.5pt, yshift=4.5pt]0, 4) -- ([yshift=4.5pt]1.5, 4);

    \draw[->, red, thick, decorate, decoration={snake, amplitude=0.5mm, segment length=3mm}] 
        (0, 2.5) -- (1.5, 4) node[midway, below right, black] {$L^i_j$};

    
    \fill[black] (2.5, 4) circle (1.5pt) node[below=6pt] {$v^{i'}_j$};
    \fill[black] (4, 4) circle (1.5pt) node[below right=6pt] {$v^0_j = u^0_{j+1}$};
    \fill[black] (4, 5.5) circle (1.5pt) node[right=6pt] {$u^{i'}_{j+1}$};

    \draw[cyan!80!blue, line width=3.5pt, rounded corners=2pt] 
        ([yshift=-4.5pt]2.5, 4) -- node[below, black] {$\tilde{D}^{i'}_j$} ([xshift=4.5pt, yshift=-4.5pt]4, 4) -- ([xshift=4.5pt]4, 5.5);

    \draw[->, red, thick, decorate, decoration={snake, amplitude=0.5mm, segment length=3mm}] 
        (2.5, 4) -- (4, 5.5) node[midway, above left, black] {$\tilde{L}^{i'}_j$};

\end{tikzpicture}

%% file: fig_area.tex
\begin{tikzpicture}[scale=1.0, >=stealth, very thick]
    \colorlet{pathgamma}{blue!60}
    \colorlet{pathpi}{red!60}
    \colorlet{posregion}{green!10}
    \colorlet{negregion}{red!10}

    \draw[step=1cm, gray!30, thin] (0,0) grid (6,6);

    \fill[negregion, opacity=0.7] (0,0) -- (0,2) -- (1,2) -- (1,0) -- cycle;
    \node at (0.5, 1) {\Large $-$};

    \fill[posregion, opacity=0.7] (1,2) -- (2,2) -- (2,3) -- (1,3) -- cycle;
    \node at (1.5, 2.5) {\Large $+$};

    \fill[negregion, opacity=0.7] (2,3) -- (2,6) -- (6,6) -- (6,3) -- cycle;
    \node at (4, 4.5) {\Huge $-$};

    \draw[->, pathpi, dashed] (6,6) -- (6,3) node[midway, right, text=black] {$\pi$};
    \draw[pathpi, dashed] (6,3) -- (2,3);
    \draw[->, pathpi] (2,3) -- (1,3);
    \draw[pathpi] (1,3) -- (1,2);
    \draw[->, pathpi, dashed] (1,2) -- (1,0);
    \draw[->, pathpi, dashed] (1,0) -- (0,0) ;

    \draw[->, pathgamma] (0,0) -- (0,2) node[midway, left, text=black] {$\gamma$};
    \draw[pathgamma] (0,2) -- (1,2);
    \draw[->, pathgamma, dashed] (1,2) -- (2,2);
    \draw[->, pathgamma, dashed] (2,2) -- (2,3);
    \draw[->, pathgamma] (2,3) -- (2,6);
    \draw[->, pathgamma] (2,6) -- (6,6);

    \filldraw[fill=white, draw=black, thick] (0,0) circle (2pt) node[below left, text=black] {$(0,0)$};
    \filldraw[fill=white, draw=black, thick] (1,2) circle (2pt);
    \filldraw[fill=white, draw=black, thick] (2,3) circle (2pt);
    \filldraw[fill=white, draw=black, thick] (6,6) circle (2pt) node[above right, text=black] {$(n,n)$};

\end{tikzpicture}